\documentclass[reqno]{amsart}
\usepackage{amssymb,eucal,latexsym, mathrsfs,xy,enumerate,graphicx,url}

\xyoption{all}

{\end{enumerate}}
\newenvironment{enumeratea}{\begin{enumerate}[\upshape (a)]}%
{\end{enumerate}}
\newenvironment{enumerater}{\begin{enumerate}[\upshape (1)]}%
{\end{enumerate}}

\usepackage[usenames]{color}

\hyphenation{bound-ed com-mu-ta-tive mon-oid re-fine-ment}

\newcommand{\pup}[1]{\textup{(}{#1}\textup{)}}

\newcommand{\idproj}{i\-de\-al-pro\-jec\-tive}
\newcommand{\idprojy}{i\-de\-al-pro\-jec\-tiv\-ity}
\newcommand{\jirr}{join-ir\-re\-duc\-i\-ble}

\newcommand{\jh}{join-ho\-mo\-mor\-phism}
\newcommand{\mh}{meet-ho\-mo\-mor\-phism}

\newcommand{\xF}{\mathbf{F}}
\newcommand{\WD}{\mathbf{wD}}
\newcommand{\WDj}{\mathbf{wD}_{\vee}}
\newcommand{\Mnoeth}{\cM_{\textup{n{\oe}th}}}
\newcommand{\DX}{\mathsf{D}_{4}}
\newcommand{\hDX}{\widehat{\mathsf{D}}_{4}}

\DeclareMathOperator{\Sub}{Sub}
\DeclareMathOperator{\Ji}{Ji}
\DeclareMathOperator{\Id}{Id}
\DeclareMathOperator{\Pow}{Pow}

\newcommand{\gf}{\varphi}

\newcommand{\fine}[1]{[#1]_{\neq\es}^{<\omega}}

\newcommand{\go}{\omega}
\newcommand{\gl}{\lambda}
\newcommand{\gL}{\Lambda}
\newcommand{\gO}{\Omega}

\newcommand{\jz}{$(\vee,0)$}
\newcommand{\js}{join-semi\-lat\-tice}

\newcommand{\jzh}{\jz-ho\-mo\-mor\-phism}

\newcommand{\ol}[1]{\overline{#1}}

\newcommand{\pI}[1]{\bigl({#1}\bigr)}

\newcommand{\set}[1]{\left\{#1\right\}}
\newcommand{\setm}[2]{\set{{#1}\mid{#2}}}
\newcommand{\vecm}[2]{\left({#1}\mid{#2}\right)}
\newcommand{\seq}[1]{\langle{#1}\rangle}
\newcommand{\seqm}[2]{\langle{#1}\mid{#2}\rangle}

\newcommand{\sB}{{\mathsf{B}}}
\newcommand{\sP}{{\mathsf{P}}}
\newcommand{\sM}{{\mathsf{M}}}
\newcommand{\sN}{{\mathsf{N}}}
\newcommand{\id}{\mathrm{id}}

\newcommand{\dnw}{\mathbin{\downarrow}}

\newcommand{\upw}{\mathbin{\uparrow}}
\newcommand{\es}{\varnothing}
\newcommand{\res}{\mathbin{\restriction}}

\newcommand{\ZZ}{\mathbb{Z}}

\newcommand{\cC}{\mathcal{C}}
\newcommand{\cD}{\mathcal{D}}

\newcommand{\cL}{\mathcal{L}}
\newcommand{\cM}{\mathcal{M}}
\newcommand{\cN}{\mathcal{N}}
\newcommand{\cR}{\mathcal{R}}
\newcommand{\cRm}{\mathcal{R}_{\mathrm{mod}}}
\newcommand{\cU}{\mathcal{U}}
\newcommand{\cV}{\mathcal{V}}

\numberwithin{equation}{section}

\theoremstyle{plain}

\newtheorem{theorem}{Theorem}[section]
\newtheorem{proposition}[theorem]{Proposition}
\newtheorem{corollary}[theorem]{Corollary}
\newtheorem{lemma}[theorem]{Lemma}

\newtheorem{claim}{Claim}
\newtheorem*{sclaim}{Claim}

\theoremstyle{definition}

\newtheorem{definition}[theorem]{Definition}

\newtheorem{example}[theorem]{Example}
\newtheorem{problem}{Problem}

\theoremstyle{remark}
\newtheorem{remark}[theorem]{Remark}
\newtheorem*{note}{Note}

\newcommand{\qedc}{{\qed}~{\rm Claim~{\theclaim}.}}
\newcommand{\qedsc}{{\qed}~{\rm Claim.}}

\newenvironment{cproof}
{\begin{proof}[Proof of Claim.]}
{\qedc\renewcommand{\qed}{}\end{proof}}

\newenvironment{scproof}
{\begin{proof}[Proof of Claim.]}
{\qedsc\renewcommand{\qed}{}\end{proof}}

\numberwithin{figure}{section}
\numberwithin{table}{section}

\newcommand{\ba}{\boldsymbol{a}}
\newcommand{\bb}{\boldsymbol{b}}

\newcommand{\bx}{\boldsymbol{x}}

\newcommand{\va}{\mathsf{a}}
\newcommand{\vb}{\mathsf{b}}

\newcommand{\vx}{\mathsf{x}}
\newcommand{\vy}{\mathsf{y}}
\newcommand{\vu}{\mathsf{u}}
\newcommand{\vv}{\mathsf{v}}

\title[Projectivity and transferability]{Relative projectivity and transferability\\
for partial lattices}

\author[F. Wehrung]{Friedrich Wehrung}
\address{LMNO, CNRS UMR 6139\\
D\'epartement de Math\'ematiques\\
Universit\'e de Caen Normandie\\
14032 Caen cedex\\
France}
\email{friedrich.wehrung01@unicaen.fr}
\urladdr{http://www.math.unicaen.fr/\~{}wehrung}

\subjclass[2010]{06B25, 06B20, 06D05, 06C05, 06C20}
\keywords{Projective, transferable, ideal-projective, weakly distributive, partial lattice, variety, distributive, modular, relatively complemented, pure sublattice}
\date{\today}

\begin{document}

\begin{abstract}
A partial lattice~$P$ is \emph{\idproj}, with respect to a class~$\cC$ of lattices, if for every~$K\in\cC$ and every homomorphism~$\gf$ of partial lattices from~$P$ to the ideal lattice of~$K$, there are arbitrarily large choice functions $f\colon P\to\nobreak K$ for~$\gf$ that are also homomorphisms of partial lattices.
This extends the traditional concept of (sharp)  \emph{transferability} of a lattice with respect to~$\cC$.
We prove the following:
\begin{enumerater}
\item A finite lattice~$P$, belonging to a variety~$\cV$, is sharply transferable with respect to~$\cV$ if{f} it is projective with respect to~$\cV$ and \emph{weakly distributive} lattice homomorphisms, if{f} it is \idproj\ with respect to~$\cV$.

\item Every finite distributive lattice is sharply transferable with respect to the class~$\cRm$ of all relatively complemented modular lattices.

\item The gluing~$\DX$ of two squares, the top of one being identified with the bottom of the other one, is sharply transferable with respect to a variety~$\cV$ if{f}~$\cV$ is contained in the variety~$\cM_{\go}$ generated by all lattices of length~$2$.

\item $\DX$ is projective, but not \idproj, with respect to~$\cRm$\,.

\item $\DX$ is transferable, but not sharply transferable, with respect to the variety~$\cM$ of all modular lattices.
This solves a 1978 problem of G. Gr\"atzer.

\item We construct a modular lattice whose canonical embedding into its ideal lattice is not pure.
This solves a 1974 problem of E. Nelson.
\end{enumerater}
\end{abstract}

\maketitle

\section{Introduction}\label{S:Intro}

\subsection{Background}\label{Su:Backgrnd}
A lattice~$P$ is \emph{transferable} (resp., \emph{sharply transferable}) if for every lattice~$K$ and every lattice embedding~$\gf$, from~$P$ to the lattice~$\Id K$ of all ideals of~$K$, there exists a lattice embedding $f\colon P\hookrightarrow K$ (resp., a lattice embedding $f\colon P\hookrightarrow K$ such that $f(x)\in\gf(y)$ if{f} $x\leq y$, whenever $x,y\in P$).
This concept, originating in Gaskill~\cite{Gask72}, started a considerable amount of work involving many authors, which led to the following characterization of (sharp) transferability for finite lattices (cf. pages~502 and~503 in Gr\"atzer~\cite{LTF} for details).

\begin{theorem}\label{T:CharTransf}
The following statements are equivalent, for any finite lattice~$P$:
\begin{enumerater}
\item\label{Ltransf}
$P$ is transferable;

\item\label{Lshtransf}
$P$ is sharply transferable;

\item\label{LbdedW}
$P$ is a bounded homomorphic image of a free lattice, and it satisfies Whitman's Condition;

\item\label{LSDW}
$P$ is semidistributive, and it satisfies Whitman's Condition;

\item\label{Lsubfree}
$P$ can be embedded into a free lattice;

\item\label{Lproj}
$P$ is projective.

\end{enumerater}
\end{theorem}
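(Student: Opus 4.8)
The plan is to prove the six statements equivalent through a cycle of implications, organized so that the genuinely structural work is isolated in a single block. I would first establish the equivalence of the four ``internal'' conditions --- being projective, embeddable into a free lattice, semidistributive together with Whitman's condition $(W)$, and a bounded homomorphic image of a free lattice --- and then attach transferability and sharp transferability to this block. Concretely, the loop $\text{projective}\Rightarrow\text{(sublattice of a free lattice)}\Rightarrow\text{(semidistributive}+(W))\Rightarrow\text{(bounded homomorphic image of a free lattice)}\Rightarrow\text{projective}$ handles the structural part, while $\text{(sublattice of a free lattice)}\Rightarrow\text{sharply transferable}\Rightarrow\text{transferable}\Rightarrow\text{(semidistributive}+(W))$ folds transferability in, the middle implication being trivial.

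The routine implications come first. If $P$ is projective, apply projectivity to the canonical surjection $p\colon F\twoheadrightarrow P$, where $F$ is free on the underlying set of $P$, to obtain a section $s\colon P\to F$ with $p s=\id_P$; then $s p$ is an idempotent endomorphism of $F$ whose image is a sublattice isomorphic to $P$, so $P$ embeds into $F$. If $P$ embeds into a free lattice $F$, then $P$ inherits semidistributivity (quasi-identities, hence preserved under sublattices) and $(W)$ from $F$: by Whitman's theorem $F$ satisfies $(W)$, and since meets and joins in a sublattice agree with those of the ambient lattice, an instance of the hypothesis of $(W)$ inside $P$ is also an instance inside $F$, whence one of the four conclusions holds in $F$; each such conclusion is an inequality between elements of $P$ and so already holds in $P$. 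Finally sharp transferability implies transferability by definition.

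The hard core is the pair $\text{(semidistributive}+(W))\Rightarrow\text{(bounded homomorphic image of a free lattice)}\Rightarrow\text{projective}$, and essentially all the depth sits here. For the first implication I would run the structure theory of finite bounded lattices: a finite lattice is a bounded homomorphic image of a free lattice exactly when the join-dependency relation $D$ on its join-irreducibles is acyclic and the dual condition holds, equivalently when it is obtained from the one-element lattice by finitely many interval doublings (Day), and one shows that semidistributivity together with $(W)$ forces both acyclicity conditions --- a $D$-cycle in a finite semidistributive lattice is turned into a violation of $(W)$. For the second implication, boundedness supplies, for each $x\in P$, a smallest and a largest preimage under the canonical surjection $p\colon F\twoheadrightarrow P$, and $(W)$ is used to verify that the resulting assignment is a lattice homomorphism, hence a section of $p$, giving projectivity. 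I would cite the assembled account in \cite{LTF} rather than reconstruct it.

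It remains to link transferability to the structural conditions. For $\text{transferable}\Rightarrow\text{(semidistributive}+(W))$ I argue contrapositively: if $P$ fails $(W)$, say $a\wedge b\le c\vee d$ with none of the four consequences holding, one builds a lattice $K$ and an embedding $\gf\colon P\hookrightarrow\Id K$ engineered so that realizing the meet $a\wedge b$ at the bottom of the ideals $\gf(a)$ and $\gf(b)$ is incompatible with keeping the images inside $\gf(c)$ or $\gf(d)$; then no lattice embedding $f\colon P\hookrightarrow K$ with $f(x)\in\gf(x)$ can exist, contradicting transferability, and a parallel construction deals with a failure of semidistributivity (Gaskill, Platt, Nation). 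For $\text{(sublattice of a free lattice)}\Rightarrow\text{sharply transferable}$, given an embedding $\gf\colon P\hookrightarrow\Id K$, use the canonical-form calculus of the free lattice in which $P$ sits: each relation of $P$ is witnessed by finitely many applications of the Whitman rules, so one can choose $f(x)\in\gf(x)$ by recursion along a linear refinement of the order of $P$, at each step picking $f(x)$ far enough up inside $\gf(x)$ that these witnessing inequalities are respected, which forces $f$ to be a lattice embedding with $f(x)\in\gf(y)$ if and only if $x\le y$. The main obstacle is, without question, the structural block: proving that a finite semidistributive lattice satisfying $(W)$ is a bounded homomorphic image of a free lattice and is projective requires the full machinery of $D$-cycles, Day doubling, and canonical forms in free lattices, and there is no short route around it.
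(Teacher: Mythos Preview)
The paper does not prove Theorem~\ref{T:CharTransf}; it is stated in the background subsection as a known result assembled from the literature, with the reference ``cf.\ pages~502 and~503 in Gr\"atzer~\cite{LTF} for details'' given in lieu of a proof. So there is no argument in the paper to compare your proposal against.

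Your outline is a reasonable sketch of the classical proof and correctly identifies where the depth lies (Nation's theorem that finite semidistributive plus~(W) implies bounded, and the Kostinsky--McKenzie result that finite bounded plus~(W) implies projective). A couple of your heuristic explanations are a bit off: the statement ``a $D$-cycle in a finite semidistributive lattice is turned into a violation of~$(W)$'' inverts the logic somewhat---boundedness is equivalent to acyclicity of the $D$-relation on both sides, and what Nation shows is that under~(W), semidistributivity already forces this acyclicity; also, in the step from bounded to projective you should make explicit that~(W) is part of the hypothesis (bounded alone does not give projectivity). Your sketch of ``sublattice of a free lattice $\Rightarrow$ sharply transferable'' via canonical forms is vague and would need substantial work to make precise; a cleaner route, once the other equivalences are in hand, is to go through projectivity and use the Baker--Hales representation of~$\Id K$ as a weakly distributive image of a sublattice of an ultrapower of~$K$ (this is exactly the mechanism the present paper exploits in Proposition~\ref{P:BaHaRepr} and Theorem~\ref{T:Transf2Proj}). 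But since the paper simply cites the result, any of these routes is acceptable as a pointer to the literature.
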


Theorem~\ref{T:CharTransf} does not extend to infinite lattices.
For example, the chain $\set{0,1}\times\go$ is transferable, but not sharply transferable (cf. Tan~\cite{Tan75}).
However, the implication \eqref{Lshtransf}$\Rightarrow$\eqref{Lproj} (i.e., sharp transferability implies projectivity) holds in general, see Nation~\cite{NationBFL} together with R. Freese's comments on page~588 in Gr\"atzer \cite[Appendix~G]{GLT2}.

Requiring the condition $K\in\cC$ in the definition above, for a class~$\cC$ of lattices, relativizes to~$\cC$ the concepts of transferability and sharp transferability.

For the variety~$\cD$ of all distributive lattices, more can be said.
The following result is implicit in Gaskill~\cite{Gask72a}, with a slightly incorrect proof in the first version of that paper.
It is also stated explicitly in Nelson~\cite{Nels74}.

\begin{theorem}[Gaskill, 1972; Nelson, 1974]\label{T:IdProjDLat}
Every finite distributive lattice is sharply transferable with respect to~$\cD$.
\end{theorem}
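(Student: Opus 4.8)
Let $P$ be a finite distributive lattice. By Birkhoff duality, put $Q:=\Ji P$ for the poset of nonzero join-irreducible elements of~$P$ and identify~$P$ with the lattice of down-sets of~$Q$, the element $p\in Q$ corresponding to the principal down-set~$\dnw p$; then $p_{*}:=\dnw p\sd\set{p}$ is the unique lower cover of~$\dnw p$ in~$P$. Fix $K\in\cD$ and a lattice embedding $\gf\colon P\hookrightarrow\Id K$. Since~$\gf$ is an order-embedding preserving finite meets, $\gf(\dnw p)\cap\gf(Q\sd\upw p)=\gf\pI{\dnw p\cap(Q\sd\upw p)}=\gf(p_{*})\subsetneq\gf(\dnw p)$, so $\gf(\dnw p)\sd\gf(Q\sd\upw p)\neq\es$ for every $p\in Q$; moreover $\gf$ preserves finite joins, so $\gf(X)=\bigvee_{p\in X}\gf(\dnw p)$ in~$\Id K$ for every nonempty down-set~$X$.

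The plan is to produce a family $(k_{p})_{p\in Q}$ of elements of~$K$ together with an element $k_{0}\in\gf(\zero_{P})$ such that \textup{(i)}~$k_{p}\in\gf(\dnw p)\sd\gf(Q\sd\upw p)$ for all~$p$, and \textup{(ii)}~$k_{p}\wedge k_{q}\leq k_{0}\vee\bigvee\setm{k_{t}}{t\leq p\ \text{and}\ t\leq q}$ for all $p,q\in Q$ \textup{(}for comparable $p,q$ this is automatic, as $p$ or~$q$ itself then occurs on the right\textup{)}. Given such a family, put $f(X):=k_{0}\vee\bigvee_{p\in X}k_{p}$ for $X\in P$. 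Using distributivity of~$K$ one checks routinely that~$f$ preserves joins, that it preserves meets exactly because of~\textup{(ii)} \textup{(}the reverse inequality following from monotonicity of~$f$\textup{)}, and that $f(X)\in\gf(Y)$ if{f} $X\leq Y$: indeed $k_{p}\in\gf(\dnw p)\subseteq\gf(X)$ for $p\in X$ and $k_{0}\in\gf(\zero_{P})\subseteq\gf(X)$, so $f(X)\in\gf(X)$, while if $p\in X\sd Y$ then $k_{p}\leq f(X)$ and $k_{p}\notin\gf(Y)\subseteq\gf(Q\sd\upw p)$. Thus~$f$ is a sharp lattice embedding, which is what we want. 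To get ordinary transferability as well---and, more importantly, to make the construction below robust---I would in fact prove the stronger ``ideal-projective'' statement: given any order-preserving choice function $g\colon P\to K$, one can additionally arrange $k_{p}\geq w_{p}$ and $k_{0}\geq w_{0}$, where $w_{p}:=\bigvee\setm{v^{X}_{p}}{X\ni p}\in\gf(\dnw p)$ for any choice of $v^{X}_{p}\in\gf(\dnw p)$ with $g(X)\leq\bigvee_{p\in X}v^{X}_{p}$, and $w_{0}:=g(\zero_{P})\in\gf(\zero_{P})$; this forces $f\geq g$.

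To build the family, first choose for each~$p$ an element $k_{p}\in\gf(\dnw p)\sd\gf(Q\sd\upw p)$ with $k_{p}\geq w_{p}$, which is possible since $w_{p}\in\gf(\dnw p)$ and, $\gf(Q\sd\upw p)$ being a down-set, replacing an element by a larger one within $\gf(\dnw p)$ cannot bring it into $\gf(Q\sd\upw p)$. Then process~$Q$ by \emph{decreasing height}, only ever enlarging the~$k_{p}$: for each pair of incomparable $p,q\in Q$ whose set $\mu(p,q)$ of maximal common lower bounds is nonempty, as soon as $k_{p}$ and~$k_{q}$ are in final form---which is the case once all join-irreducibles of height $\geq\min\bigl(\operatorname{ht}(p),\operatorname{ht}(q)\bigr)$ have been treated---fix once and for all a decomposition $k_{p}\wedge k_{q}\leq\bigvee_{s\in\mu(p,q)}e^{p,q}_{s}$ with $e^{p,q}_{s}\in\gf(\dnw s)$ and $e^{p,q}_{s}\leq k_{p}\wedge k_{q}$; this is possible because $k_{p}\wedge k_{q}\in\gf(\dnw p\cap\dnw q)=\bigvee_{s\in\mu(p,q)}\gf(\dnw s)$ and one may replace each summand by its meet with~$k_{p}\wedge k_{q}$. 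Then, upon reaching the height of each $s\in\mu(p,q)$, replace~$k_{s}$ by~$k_{s}\vee e^{p,q}_{s}$. Since no order relation among the~$k_{p}$ is imposed, an enlargement of some~$k_{r}$ can only disturb the meets $k_{r}\wedge k_{r'}$ with~$r'$ incomparable to~$r$, and these lie in $\gf(\dnw r\cap\dnw r')$, which involves only join-irreducibles of height strictly less than $\operatorname{ht}(r)$, treated only afterwards; so the procedure is well defined, and when all heights are done one has $k_{p}\wedge k_{q}\leq\bigvee_{s\in\mu(p,q)}e^{p,q}_{s}\leq\bigvee_{s\in\mu(p,q)}k_{s}$ for every incomparable $p,q$ with $\mu(p,q)\neq\es$. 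Finally, set $k_{0}:=w_{0}\vee\bigvee\setm{k_{p}\wedge k_{q}}{p,q\ \text{incomparable},\ \mu(p,q)=\es}$, which lies in $\gf(\zero_{P})$ since each such meet lies in $\gf(\dnw p\cap\dnw q)=\gf(\zero_{P})$; then \textup{(i)} and \textup{(ii)} hold for the entire family.

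The step I expect to be the real obstacle is condition~\textup{(ii)}. The point is that ideals of~$K$ need not be principal, so there is no canonical ``generator'' of~$\gf(\dnw p)$ to take as~$k_{p}$, and the two obvious attempts both break down: if one fixes the representatives below~$p$ first, the error term~$k_{p}\wedge k_{q}$, though it does lie in the ideal $\gf(\dnw p\cap\dnw q)$, need not be dominated by what was chosen there; and repairing from above meets the symmetric difficulty. The way out is exactly the combination used above---only enlarge, impose no order among the~$k_{p}$ so that each enlargement propagates \emph{strictly downward} in the height of~$Q$, and use distributivity of~$K$ to split each of the finitely many offending meets over the directed ideals attached to the relevant deeper join-irreducibles---after which finiteness of~$Q$ forces termination. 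The same argument can be repackaged as an induction on $\lvert\Ji P\rvert$ in which one deletes a maximal join-irreducible of~$P$: the extension step reduces precisely to the inequality just discussed, and the strengthened ideal-projective hypothesis is what lets one take the representatives for the smaller lattice large enough to absorb it.
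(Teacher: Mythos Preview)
The paper does not supply its own proof of this theorem—it is quoted as a known result of Gaskill and Nelson, and is then fed into Theorem~\ref{T:Transf2Proj} to obtain Corollary~\ref{C:IdProjDLat}. So there is no proof in the paper to compare against directly.

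Your argument is correct. The Birkhoff representation of~$P$ by down-sets of $Q=\Ji P$, the reduction to finding $(k_p)_{p\in Q}$ and~$k_0$ satisfying~(i) and~(ii), the verification that $f(X)=k_0\vee\bigvee_{p\in X}k_p$ is then a choice homomorphism with the transfer property (using distributivity of~$K$ for meet-preservation), and the top-down repair procedure all work as you describe. In particular, the well-definedness of the height recursion is sound: an enlargement of~$k_r$ is triggered only by pairs $(p,q)$ with $r\in\mu(p,q)$, hence with $\min(\operatorname{ht}(p),\operatorname{ht}(q))>\operatorname{ht}(r)$, so by the time you reach height~$\operatorname{ht}(r)$ all decompositions feeding into~$k_r$ have already been fixed; and once $k_p,k_q$ are frozen, the pieces $e^{p,q}_s$ you extract (using distributivity of~$K$ to split $k_p\wedge k_q\in\bigvee_{s\in\mu(p,q)}\gf(\dnw s)$) land only at strictly smaller heights. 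Condition~(i) survives all enlargements because $\gf(Q\sd\upw p)$ is a down-set.

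Two minor remarks. First, in your ideal-projective strengthening you take the initial choice function~$g$ to be isotone, which Definition~\ref{D:IdProj} does not assume; this is harmless, since any choice function for~$\gf$ is majorized by an isotone one. Second, you in fact prove \idprojy\ with respect to~$\cD$ directly, whereas the paper reaches that conclusion only indirectly, by combining the cited sharp transferability result with the abstract equivalence of Theorem~\ref{T:Transf2Proj}; your route is the more economical one here.
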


In particular, since not every finite distributive lattice is projective within~$\cD$ (see Balbes~\cite{Balb67}, also the comments following the proof of Theorem~\ref{T:RCML} in the present paper), Nation's above-cited result, that sharp transferability implies projectivity, does not relativize to the variety of all distributive lattices.

Theorem~\ref{T:IdProjDLat} will be further amplified in Corollary~\ref{C:IdProjDLat}.

Denote by~$\Mnoeth$ the class of all modular lattices without infinite bounded ascending chains.
The following result is established, in the course of characterizing the nonstable K-theory of certain C*-algebras, in Pardo and Wehrung \cite[Theorem~4.3$'$]{ParWeh06}.
Although this result easily implies the corresponding sharp transferability result, we do not state it in those terms, for reasons that we will explain shortly.

For a function~$\gf$ with domain~$X$, a \emph{choice function} for~$\gf$ is a function~$f$ with domain~$X$ such that $f(x)\in\gf(x)$ for any $x\in X$.

\begin{theorem}[Pardo and Wehrung, 2006]\label{T:ModNoeth}
Let~$D$ be a finite distributive lattice, let $M\in\Mnoeth$, and let $\gf\colon D\to\Id M$ be a lattice homomorphism.
Then for any choice function~$f_0$ for~$\gf$, there is a choice function~$f$ for~$\gf$, which is also a lattice homomorphism, and such that $f_0\leq f$.
\end{theorem}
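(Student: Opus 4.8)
The plan is to induct on the cardinality of~$D$, the case $|D|=1$ being trivial (take $f:=f_0$). For the inductive step I would use the order structure of a finite distributive lattice: fix a maximal join-irreducible~$p$ of~$D$, put $c:=\bigvee\bigl(\Ji(D)\setminus\{p\}\bigr)$, and let~$p_*$ be the unique lower cover of~$p$. Standard facts then give $c\prec\hat1=p\vee c$, $p\wedge c=p_*$, and a partition $D=D'\cup{\uparrow}p$ in which $D':={\downarrow}c$ is a proper sublattice (indeed an ideal) of~$D$, and $x=(x\wedge c)\vee p$ with $x\wedge c\in[p_*,c]\subseteq D'$ for every $x\in{\uparrow}p$. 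The idea is to produce, via the induction hypothesis, a homomorphism~$g'$ on~$D'$ with the right properties, then extend it to all of~$D$ by $g:=g'$ on~$D'$ and $g(x):=g'(x\wedge c)\vee\gamma$ on~${\uparrow}p$, where $\gamma\in M$ is a single element --- morally the missing value~$g(p)$ --- to be chosen with care.

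Before invoking the induction hypothesis I would do the bookkeeping that governs the choice of~$\gamma$. Since $\gf(t\vee p)=\gf(t)\vee\gf(p)$ in~$\Id M$, for each $t\in[p_*,c]$ pick $a_t\in\gf(t)$ and $b_t\in\gf(p)$ with $f_0(t\vee p)\le a_t\vee b_t$, and let~$\beta$ be the largest element of the ideal $\gf(p_*)\cap{\downarrow}\bigl(\bigvee_t b_t\bigr)$. This is the one place the hypothesis $M\in\Mnoeth$ is used: that ideal is contained in a principal ideal of~$M$, which has no infinite ascending chain (such a chain would be bounded in~$M$), hence satisfies the ACC; a directed set with the ACC has a greatest element, so~$\beta$ exists. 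Now define a choice function~$f_0'$ for $\gf\restriction D'$ by $f_0'(p_*):=f_0(p_*)\vee a_{p_*}\vee\beta$, $f_0'(t):=f_0(t)\vee a_t$ for $t\in[p_*,c]\setminus\{p_*\}$, and $f_0':=f_0$ on the rest of~$D'$; one checks $f_0'(y)\in\gf(y)$ for all~$y$ and $f_0'\ge f_0\restriction D'$. Applying the induction hypothesis to~$D'$, $\gf\restriction D'$, and~$f_0'$ yields a lattice homomorphism $g'\colon D'\to M$ with $g'(y)\in\gf(y)$ for all~$y$ and $g'\ge f_0'$.

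Set $\gamma:=g'(p_*)\vee\bigvee_t b_t$. Two properties make the extension work: $\gamma\in\gf(p)$, which is immediate; and $\gamma\wedge g'(c)=g'(p_*)$, which follows from the modular law once one knows $\bigl(\bigvee_t b_t\bigr)\wedge g'(c)\le g'(p_*)$ --- and this holds because $\bigl(\bigvee_t b_t\bigr)\wedge g'(c)$ lies both in $\gf(p)\cap\gf(c)=\gf(p_*)$ and below $\bigvee_t b_t$, hence in ${\downarrow}\beta\subseteq{\downarrow}g'(p_*)$. Granting these, one checks that~$g$ is a homomorphism: preservation of finite joins is a formal computation using the distributivity of~$D$ and the fact that $x\mapsto x\wedge c$ is an endomorphism of~$D$, while preservation of meets reduces, in the two nontrivial cases ($x,y\in{\uparrow}p$; and $x\in D'$ with $y\in{\uparrow}p$), to checking that $(a\wedge b)\vee\gamma=(a\vee\gamma)\wedge(b\vee\gamma)$ whenever $a,b\in[g'(p_*),g'(c)]$, and that $A\wedge(B\vee\gamma)=A\wedge B$ whenever $A\le g'(c)$ and $B\in[g'(p_*),g'(c)]$; both are short consequences of the modular law, using $\gamma\ge g'(p_*)$ and $\gamma\wedge g'(c)=g'(p_*)$, and this is the only place modularity of~$M$ is used. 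Finally $g(x)\in\gf(x)$ for all~$x$, and $g\ge f_0$: on~$D'$ because $g'\ge f_0'\ge f_0$, and on~${\uparrow}p$ because $g(x)=g'(x\wedge c)\vee\gamma\ge a_{x\wedge c}\vee b_{x\wedge c}\ge f_0(x)$.

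The main obstacle is pinning down~$\gamma$. It must be at once \emph{large} --- above~$g'(p_*)$ and above each $p$-component~$b_t$ of~$f_0$, so that monotonicity of the extension and domination of~$f_0$ survive --- and \emph{transverse} to the sublattice $g'(D')$, precisely in the sense $\gamma\wedge g'(c)=g'(p_*)$, so that the two identities just mentioned hold in the merely modular (not distributive) lattice~$M$. Reconciling these two demands is exactly what the preparatory inflation of~$f_0'$ by~$\beta$ achieves, and it is there, and only there, that the hypothesis $M\in\Mnoeth$ is genuinely needed; modularity of~$M$ enters only in the final homomorphism verification, and the finiteness and distributivity of~$D$ only structurally, through the poset of join-irreducibles. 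I expect the long-but-routine part to be that homomorphism verification, and the delicate part to be arranging the definitions of~$f_0'$ and~$\gamma$ so that the two properties of~$\gamma$ come out exactly.
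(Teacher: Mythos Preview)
The paper does not give its own proof of this theorem: it is quoted from Pardo and Wehrung~\cite{ParWeh06} (their Theorem~4.3$'$) and used here only as input to Corollary~\ref{C:Transf2Proj2}. So there is nothing in the present paper to compare your argument against directly.

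That said, your proposal is a correct, self-contained proof. The induction scheme on~$|D|$ via a maximal \jirr~$p$, the decomposition $D=D'\cup{\uparrow}p$ with $D'={\downarrow}c$, and the extension formula $g(x)=g'(x\wedge c)\vee\gamma$ all check out. The delicate point --- arranging $\gamma\wedge g'(c)=g'(p_*)$ --- is handled correctly: the element $\beta=\max\bigl(\gf(p_*)\cap{\downarrow}\bigvee_t b_t\bigr)$ exists because that ideal sits inside a principal ideal of~$M$, hence satisfies the ACC by the definition of~$\Mnoeth$; inflating~$f_0'(p_*)$ by~$\beta$ forces $g'(p_*)\geq\beta$, and then $\bigl(\bigvee_t b_t\bigr)\wedge g'(c)\in\gf(p)\cap\gf(c)=\gf(p_*)$ together with $\bigl(\bigvee_t b_t\bigr)\wedge g'(c)\leq\bigvee_t b_t$ gives $\bigl(\bigvee_t b_t\bigr)\wedge g'(c)\leq\beta\leq g'(p_*)$, whence the modular law yields $\gamma\wedge g'(c)=g'(p_*)$. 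Your two modular identities --- that $x\mapsto x\vee\gamma$ is a lattice isomorphism from $[g'(p_*),g'(c)]$ onto $[\gamma,g'(c)\vee\gamma]$, and that $A\wedge(B\vee\gamma)=A\wedge B$ for $A\leq g'(c)$ and $B\in[g'(p_*),g'(c)]$ --- are exactly the standard transposition consequences of $\gamma\wedge g'(c)=g'(p_*)$ in a modular lattice, and they dispatch the meet-preservation cases cleanly. The join cases and the verifications $g(x)\in\gf(x)$, $g\geq f_0$ are routine as you say.

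One small stylistic remark: you might note explicitly that $[p_*,c]$ is finite (since~$D$ is), so that $\bigvee_t b_t$ and the definition of~$\beta$ make sense; and that $p\vee c=1_D$ (which you use for $(x\wedge c)\vee p=x$) follows from~$p$ being a \emph{maximal} \jirr. But these are cosmetic.
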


Theorem~\ref{T:ModNoeth} will be further amplified in Corollary~\ref{C:Transf2Proj2}.

In the context of Theorem~\ref{T:ModNoeth}, we say that~$D$ is \emph{\idproj\ with respect to~$\Mnoeth$} (cf. Definition~\ref{D:IdProj}).
Although \idprojy\ easily implies sharp transferability (cf. the argument of the proof of Theorem~\ref{T:Transf2Proj}\eqref{CIdProj2Transf}), the converse does not hold as a rule.

For more work on projectivity with respect to the variety~$\cM$ of all modular lattices, see Day \cite{Day70,Day73,Day75}, Mitschke and Wille~\cite{MitWil76}, Freese \cite{Freese76,Fre79}.

\subsection{Description of the results}\label{Su:DescrRes}
The existing literature suggests a close connection between sharp transferability and projectivity.
One direction is actually achieved in~\cite{BakHal74}, where Baker and Hales establish that the ideal lattice~$\Id L$ of a lattice~$L$ is a homomorphic image of a sublattice of an ultrapower of~$L$.
Hence, projectivity is stronger than sharp transferability \emph{a priori}.
We push this observation further in Theorem~\ref{T:Transf2Proj}, where we relate sharp transferability to the new concept of \emph{\idprojy}, and also to projectivity with respect to the class~$\WD$ of all \emph{weakly distributive} lattice homomorphisms (cf. Definition~\ref{D:WD}).
In particular, for a variety~$\cV$ of lattices, the concepts of sharp transferability, \idprojy, and $\WD$-projectivity, all with respect to~$\cV$, are equivalent on finite members of~$\cV$ (Corollary~\ref{C:Transf2Proj1}).
This result does not extend to more general classes of lattices:
indeed, we prove in Section~\ref{S:RelCpl} that \emph{even on finite distributive lattices, sharp transferability, with respect to the class of all relatively complemented modular lattices, is \pup{properly} weaker than \idprojy\ with respect to that class}.

In order to accommodate such results as those of Huhn~\cite{Huhn72} or Freese~\cite{Freese76} (cf. Theorem~\ref{T:DiamProj}), we shall state Theorem~\ref{T:Transf2Proj} not only for lattices, but for \emph{partial lattices}.

We also supplement the above-cited Theorems~\ref{T:IdProjDLat} and~\ref{T:ModNoeth} by the following new result of sharp transferability for finite distributive lattices:
\emph{Every finite distributive lattice is sharply transferable with respect to the class of all relatively complemented modular lattices} (cf. Theorem~\ref{T:RCML}).

Most of our subsequent results will be focused on evaluating the amount of sharp transferability satisfied by the smallest lattice~$\DX$ not satisfying Whitman's Condition, represented, along with its auxiliary lattice~$\hDX$, on Figure~\ref{Fig:DandhatD}.

\begin{figure}[htb]
\includegraphics{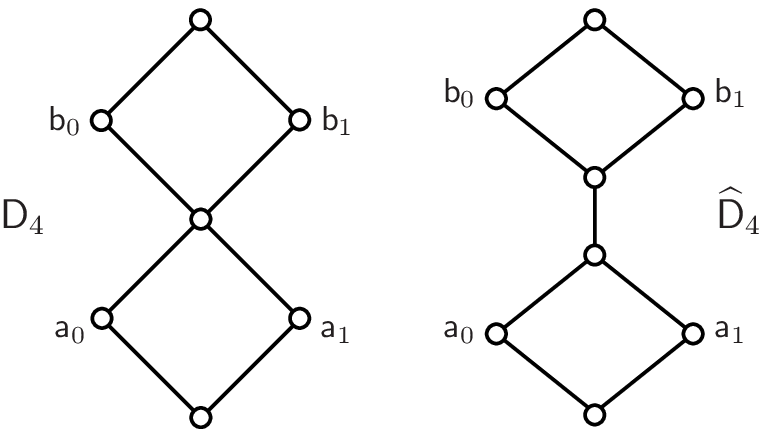}
\caption{The lattices~$\DX$ and~$\hDX$}
\label{Fig:DandhatD}
\end{figure}

We prove in Theorem~\ref{T:IdProjD} that~$\DX$ is sharply transferable, with respect to a variety~$\cV$, if{f}~$\cV$ is contained in the variety~$\cM_{\go}$ generated by all lattices of length two (which is generated by the lattice~$\sM_{\go}$ represented in Figure~\ref{Fig:MgoandM33}).
The proof of the failure of sharp transferability of~$\DX$ with respect to the variety~$\cN_5$, generated by the pentagon lattice~$\sN_5$ (cf. Figure~\ref{Fig:MgoandM33}), involves solving equations, with one unknown, in a certain class of lattices which are all finitely presented within~$\cN_5$ (cf. Lemma~\ref{L:NoMidFm}).
The proof of the corresponding result for the variety~$\cM_{3,3}$\,, generated by the lattice~$\sM_{3,3}$ represented in Figure~\ref{Fig:MgoandM33}, relies on an \emph{ad hoc} trick.

Gr\"atzer raised in \cite[Problems~I.26 and~I.27]{GLT1} the question whether transferability, with respect to a ``reasonable'' class~$\cC$ of lattices, is equivalent to sharp transferability with respect to that class.
At the time the question was asked, it was already known that it had a negative answer for infinite lattices (cf. Tan~\cite{Tan75}).
It got subsequently a positive answer for~$\cC$ defined as the variety of all lattices, and finite lattices, in Platt~\cite{Platt81}.
We prove here that~$\DX$ is transferable with respect to the variety of all modular lattices (cf. Proposition~\ref{P:D4transf}).
Since~$\DX$ is not sharply transferable with respect to that variety, this yields a negative answer to Gr\"atzer's question, even for finite distributive lattices and for~$\cC$ a variety (here, any variety~$\cC$ with $\cM_{\go}\subseteq\cC\subseteq\cM$ yields that negative answer).

Building on the proof of Theorem~\ref{T:IdProjD}, we also find, in Theorem~\ref{T:Notpure}, a modular lattice~$M$ such that the canonical embedding from~$M$ into~$\Id M$ is not pure.
This settles, in the negative, a problem raised by E. Nelson in~\cite{Nels74}.
Moreover, $M$ belongs to the variety~$\cM_{3,3}$ generated by the lattice~$\sM_{3,3}$ represented in Figure~\ref{Fig:MgoandM33}.
In particular, $M$ is $2$-distributive in Huhn's sense (cf. Huhn~\cite{Huhn72}).

\section{Notation and terminology}\label{S:Basic}

We denote by~$\fine{X}$ the set of all nonempty finite subsets of any set~$X$, and by~$\Pow{X}$ the powerset of~$X$.
We also write $\go=\set{0,1,2,\dots}$.

For unexplained concepts of lattice theory, we refer the reader to Gr\"atzer~\cite{LTF}.
For any subsets~$X$ and~$Y$ in a poset (i.e., partially ordered set)~$P$, we set
 \begin{align*}
 X\dnw Y&=\setm{x\in X}{x\leq y\text{ for some }y\in Y}\,,\\
 X\upw Y&=\setm{x\in X}{x\geq y\text{ for some }y\in Y}\,. 
 \end{align*}
We also write $X\dnw a$ (resp., $X\upw a$) instead of~$X\dnw\set{a}$ (resp., $X\upw\set{a}$), for $a\in P$.
For posets~$P$ and~$Q$, a map $f\colon P\to Q$ is \emph{isotone} (resp., \emph{antitone}) if $x\leq y$ implies that $f(x)\leq f(y)$ (resp., $f(y)\leq f(x)$) for all $x,y\in P$.
We denote by~$\Ji L$ the set of all \jirr\ elements in a lattice~$L$.
A lattice~$L$ satisfies \emph{Whitman's Condition}, or, to make it short, (W), if $y_0\wedge y_1\leq x_0\vee x_1$ implies that either $y_i\leq x_0\vee x_1$ or $y_0\wedge y_1\leq x_i$ for some $i\in\set{0,1}$.
We say that~$L$ is \emph{semidistributive} if $x\vee z=y\vee z$ implies that $x\vee z=(x\wedge y)\vee z$, for all $x,y,z\in L$, and dually.

A nonempty subset~$I$ in a \js~$S$ is an \emph{ideal} of~$S$ if for all $x,y\in S$, $x\vee y\in I$ if{f} $\set{x,y}\subseteq I$.
We denote by~$\Id S$ the set of all ideals of~$S$, partially ordered by set inclusion.
This poset is a lattice if{f}~$S$ is downward directed, in which case~$\Id S$ is a complete algebraic lattice.

A class of lattices is a \emph{variety} if it is the class of all lattices that satisfy a given set of lattice identities.
It is known since Sachs~\cite{Sachs61} that every lattice variety is closed under the assignment $L\mapsto\Id L$ (see also Gr\"atzer \cite[Lemma~59]{LTF}).

The lattices~$\sM_{\go}$\,, $\sM_{3,3}$\,, and~$\sN_5$ are represented in Figure~\ref{Fig:MgoandM33}.
They generate the varieties~$\cM_{\go}$\,, $\cM_{3,3}$\,, and~$\cN_5$\,, respectively.
The labelings introduced on Figure~\ref{Fig:MgoandM33} will be put to use in Section~\ref{S:DMgo}.
We also denote by~$\cD$ the variety of all distributive lattices, by~$\cM$ the variety of all modular lattices, and by~ $\cL$ the variety of all lattices.

It is proved in J\'onsson~\cite{Jons68} (see also Jipsen and Rose \cite[\S~3.2]{JiRo92}) that~$\cM_{\go}$ is the class of all modular lattices satisfying the following identity (lattice inclusion):
 \[
 \vx\wedge(\vy\vee(\vu\wedge\vv))\wedge(\vu\vee\vv)\leq
 \vy\vee(\vx\wedge\vu)\vee(\vx\wedge\vv)\,.
 \]
In that paper, it is also proved that a variety~$\cV$ of modular lattices is contained in~$\cM_{\go}$ if{f}~$\sM_{3,3}$ does not belong to~$\cV$.

\begin{figure}[htb]
\includegraphics{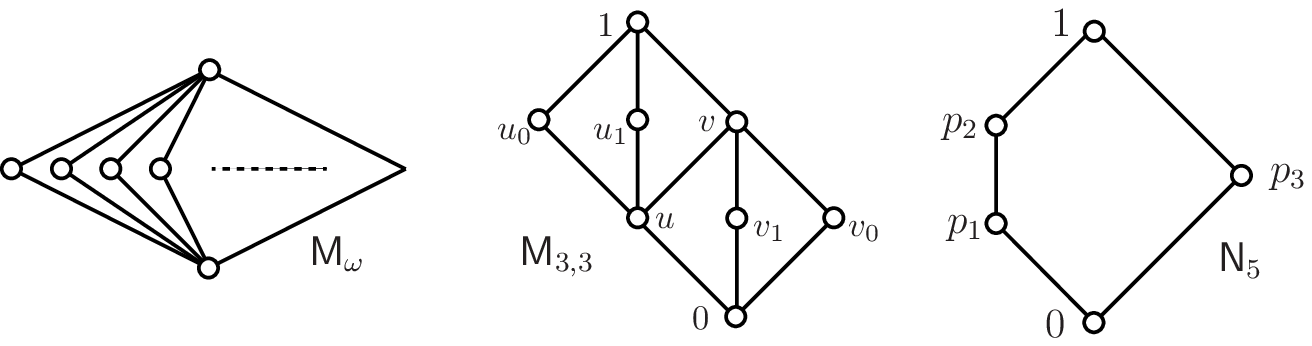}
\caption{The lattices~$\sM_{\go}$\,, $\sM_{3,3}$\,, and~$\sN_5$}
\label{Fig:MgoandM33}
\end{figure}

The following definition originates in Dean~\cite{Dean64}.

\begin{definition}\label{D:PartLatt}
A \emph{partial lattice} is a structure $(P,\leq,\bigvee,\bigwedge)$, where
$(P,\leq)$ is a poset and $\bigvee$, $\bigwedge$ are partial functions from
$\fine P$ to $P$ satisfying the following properties:
\begin{enumerate}
\item for all $a\in P$ and all $X\in\fine P$\,, the relation $a=\bigvee X$ implies that $P\upw a=\bigcap_{x\in X}(P\upw x)$;

\item for all $a\in P$ and all $X\in\fine P$\,, the relation $a=\bigwedge X$ implies that $P\dnw a=\bigcap_{x\in X}(P\dnw x)$.\end{enumerate}

If $P$ and $Q$ are partial lattices, a \emph{homomorphism of partial lattices} from~$P$ to~$Q$ is an isotone map
$f\colon P\to Q$ such that $a=\bigvee X$
(resp., $a=\bigwedge X$) implies that $f(a)=\bigvee f[X]$ (resp., $f(a)=\bigwedge f[X]$),
for all~$a\in P$ and all~$X\in\fine P$. We say that a homomorphism~$f$ of partial lattices is an
\emph{embedding} if it is an order-embedding, that is, $f(a)\leq f(b)$ implies that $a\leq b$, for all
$a,b\in P$.
\end{definition}

We shall naturally identify lattices with partial lattices $P$ such that the operations~$\bigvee$ and~$\bigwedge$ are both defined everywhere on $\fine P$.

\begin{note}
For an embedding $f\colon P\to Q$ of partial lattices, we do \emph{not}
require that $\bigvee f[X]$ be defined implies that $\bigvee X$ is defined (and
dually), for $X\in\fine P$.
\end{note}

\section{Weak distributivity, \idprojy, and sharp transferability}\label{S:IdProj}

The concept of a weakly distributive map originates in Schmidt~\cite{Schm68}.
Schmidt's definition got extended, and slightly reformulated, in various papers on the Congruence Lattice Problem (cf. Wehrung \cite{STA1-7,STA1-9}), see, in particular, \cite[Definition 7-3.4]{STA1-7}.
The reformulated definition is the following.

\begin{definition}\label{D:WD}
Let~$K$ and~$L$ be \js{s}.
A \jh\ $f\colon K\to\nobreak L$ is \emph{weakly distributive} if for all $x\in K$ and all $y_0,y_1\in L$, if $f(x)\leq y_0\vee y_1$, then there are $x_0,x_1\in K$ such that $x\leq x_0\vee x_1$ and each $f(x_i)\leq y_i$.
\end{definition}

If~$K$ and~$L$ are both downward directed (this is, for example, the case if they are both lattices), then~$f$ is weakly distributive if{f} the inverse map $f^{-1}\colon\Id L\to\Id K$ is a lattice homomorphism (cf. Wehrung \cite[Exercise~7.7]{STA1-7}).

We denote by~$\WD$ (resp., $\WDj$) the class of all surjective weakly distributive lattice homomorphisms (resp., surjective weakly distributive \jh{s} between lattices).
The following relativizes the classical definition of projectivity.

\begin{definition}\label{D:Proj}
Let~$\cC$ be a class of lattices and let~$\xF$ be a class of \emph{surjective} lattice homomorphisms.
A partial lattice~$P$ is \emph{$\xF$-projective with respect to~$\cC$} if for every $K,L\in\cC$, every lattice homomorphism $h\colon L\twoheadrightarrow K$ in~$\xF$, and every homomorphism $f\colon P\to K$ of partial lattices, there exists a homomorphism $g\colon P\to L$ of partial lattices such that $f=h\circ g$.

If~$\xF$ is the class of all surjective lattice homomorphisms, then we will just use ``projective'' instead of ``$\xF$-projective''.
\end{definition}

Observe that we do not require $P\in\cC$ in Definition~\ref{D:Proj}, even if~$P$ is a lattice.

In this paper, our classes of lattices will usually be \emph{abstract classes}, that is, closed under isomorphic copies.
Moreover, the class~$\xF$ will often be either the class of all surjective lattice homomorphisms within a given variety, or the class of all weakly distributive lattice homomorphisms within that variety.
The relevant feature common to those two classes is given by the following definition.

\begin{figure}[htb]
 \[
 \xymatrix{
 Q\ar[r]_{\ol{f}}\ar[d]_{\ol{h}} & L\ar[d]^{h}\\
 P\ar[r]^{f} & K
 }
 \]
\caption{A pullback of lattices}
\label{Fig:Pullback}
\end{figure}

\begin{definition}\label{D:PullTransf}
An abstract class~$\xF$ of lattice homomorphisms \emph{satisfies pullback transfer} if for all lattices~$P$, $K$, $L$ and all lattice homomorphisms $f\colon P\to K$ and $h\colon L\to K$, with pullback $Q=P\mathbin{\Pi}_{f,h}L$ and canonical maps $\ol{f}\colon Q\to L$ and $\ol{h}\colon Q\to P$ (cf. Figure~\ref{Fig:Pullback}), $h\in\xF$ implies that $\ol{h}\in\xF$.
\end{definition}

\begin{lemma}\label{L:WD2PullTransf}
The class of all surjective lattice homomorphisms, and the class~$\WD$ of all surjective weakly distributive lattice homomorphisms, both satisfy pullback transfer.
\end{lemma}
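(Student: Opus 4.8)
The plan is to verify the pullback-transfer property directly from the explicit description of the pullback $Q = P \mathbin{\Pi}_{f,h} L$ as the sublattice of $P \times L$ consisting of all pairs $(p,\ell)$ with $f(p) = h(\ell)$, with $\ol{h}$ and $\ol{f}$ the two coordinate projections. For the class of all surjective lattice homomorphisms: given $p \in P$, surjectivity of $h$ produces $\ell \in L$ with $h(\ell) = f(p)$, whence $(p,\ell) \in Q$ and $\ol{h}(p,\ell) = p$; so $\ol{h}$ is surjective. This step is routine.

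The substantive case is $\WD$. Suppose $h \colon L \twoheadrightarrow K$ is a surjective weakly distributive lattice homomorphism; I must show $\ol{h} \colon Q \twoheadrightarrow P$ is too. Surjectivity is handled exactly as above. For weak distributivity, fix $(p,\ell) \in Q$ and elements $p_0, p_1 \in P$ with $p \leq p_0 \vee p_1$; I need $(p_0', \ell_0'), (p_1', \ell_1') \in Q$ with $(p,\ell) \leq (p_0',\ell_0') \vee (p_1',\ell_1')$ and $\ol{h}(p_i',\ell_i') \leq p_i$ for $i \in \{0,1\}$. The idea is to take $p_i' := p \wedge p_i$ (so that $p_i' \leq p_i$ and $p = p_0' \vee p_1'$, since $p \leq p_0 \vee p_1$ gives $p = p \wedge (p_0 \vee p_1) = (p\wedge p_0) \vee (p \wedge p_1)$ only if $P$ is distributive — so instead I should just require $p \leq p_0' \vee p_1'$, which I do get, with $p_i' := p \wedge p_i \leq p_i$), and then use weak distributivity of $h$ applied to $h(\ell) = f(p) \leq f(p_0') \vee f(p_1')$ to split $\ell$ accordingly.

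Concretely: since $f(p) = f(p_0') \vee f(p_1')$ (because $p_0' \vee p_1' \leq p$ and $p \leq p_0 \vee p_1$ forces $p \leq p_0' \vee p_1'$ after replacing $p_i'$ by $p \wedge p_i$; wait — one needs $p \le p_0' \vee p_1'$, and $p \wedge p_i \le p_i$, but $p \le (p\wedge p_0)\vee(p\wedge p_1)$ need not hold). The clean fix is to work with the images directly: from $p \le p_0 \vee p_1$ we get $h(\ell) = f(p) \le f(p_0) \vee f(p_1)$ in $K$; apply weak distributivity of $h$ to obtain $\ell_0, \ell_1 \in L$ with $\ell \le \ell_0 \vee \ell_1$ and $h(\ell_i) \le f(p_i)$. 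Now $(\ell_i, ?)$ need not lie in $Q$; one must adjust. Here is the repair: replace $\ell_i$ by $\ell_i' := \ell_i \wedge h^{-1}$-preimage data — more precisely, since $h(\ell_i) \le f(p_i)$ and $h$ is surjective, the pair $(p_i, \ell_i)$ fails to be in $Q$ only because $h(\ell_i) \ne f(p_i)$. Instead choose, using surjectivity of $h$, some $m_i \in L$ with $h(m_i) = f(p_i)$, and set $\ell_i' := \ell_i \vee m_i$ if this overshoots, or better: the honest argument replaces $P$ by $p\wedge p_i$ and notes $f(p) = f(p\wedge p_0) \vee f(p \wedge p_1)$ need not hold, so one genuinely needs the weak-distributivity splitting in $K$ first and then lifts. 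I expect this bookkeeping — producing, from the $K$-level splitting $f(p) \le f(p_0)\vee f(p_1)$ and the $L$-level splitting $\ell \le \ell_0 \vee \ell_1$ with $h(\ell_i) \le f(p_i)$, a pair of elements of $Q$ lying below $(p_i, \cdot)$ whose join dominates $(p,\ell)$ — to be the main obstacle, and it is resolved by setting $p_i' := p \wedge p_i$, observing that $h(\ell) = f(p) \le f(p_0') \vee f(p_1')$ fails in general but $h(\ell) \le f(p_0) \vee f(p_1)$ suffices to apply weak distributivity, then taking $\ell_i'$ so that $h(\ell_i') = f(p_i')$ and $\ell \le \ell_0' \vee \ell_1'$, using that the fibers of $h$ are closed under the lattice operations to intersect the splitting with the correct fiber. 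I would write this out in full, checking the four conditions (isotone, join-preserving, meet-preserving, the inequality) for the resulting candidate, and then dualize is unnecessary since weak distributivity is a one-sided condition. The remaining verifications — that $\ol{h}$ is a lattice homomorphism, which is immediate as a coordinate projection — require no comment.
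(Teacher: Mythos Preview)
Your exploratory plan actually touches the correct idea but then discards it. After applying weak distributivity of~$h$ to $h(\ell)=f(p)\le f(p_0)\vee f(p_1)$ you obtain $\ell_0,\ell_1\in L$ with $\ell\le\ell_0\vee\ell_1$ and $h(\ell_i)\le f(p_i)$, and you propose picking $m_i\in L$ with $h(m_i)=f(p_i)$ (by surjectivity) and setting $\ell_i':=\ell_i\vee m_i$. You then worry this might ``overshoot'' and abandon it. It does not overshoot: since~$h$ is a join-homomorphism and $h(\ell_i)\le f(p_i)$,
\[
h(\ell_i')=h(\ell_i)\vee h(m_i)=h(\ell_i)\vee f(p_i)=f(p_i)\,,
\]
so $(p_i,\ell_i')\in Q$ exactly, with $\ol{h}(p_i,\ell_i')=p_i$, and $(p,\ell)\le(p_0,\ell_0')\vee(p_1,\ell_1')$ because $p\le p_0\vee p_1$ and $\ell\le\ell_0\vee\ell_1\le\ell_0'\vee\ell_1'$. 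This is precisely the paper's argument, and it finishes the proof in one line.

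By contrast, the approach you settle on at the end --- taking $p_i':=p\wedge p_i$ and seeking $\ell_i'$ with $h(\ell_i')=f(p_i')$ --- has a genuine gap, and it is one you yourself flagged earlier: the required inequality $(p,\ell)\le(p_0',\ell_0')\vee(p_1',\ell_1')$ demands $p\le(p\wedge p_0)\vee(p\wedge p_1)$ in the first coordinate, which fails in a general lattice. Moreover, even the second-coordinate requirement is problematic: weak distributivity gives $h(\ell_i)\le f(p_i)$, not $h(\ell_i)\le f(p\wedge p_i)$, and ``intersecting with the correct fiber'' does not repair this while preserving $\ell\le\ell_0'\vee\ell_1'$. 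Drop the $p\wedge p_i$ detour entirely; keep $p_i$ as is and enlarge~$\ell_i$ as above.
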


\begin{proof}
Given $P$, $K$, $L$, $f$, $h$ as on Figure~\ref{Fig:Pullback}, it is well known (and easy to verify) that up to isomorphism, the pullback is given by $Q=\setm{(x,y)\in P\times L}{f(x)=h(y)}$, with $\ol{f}(x,y)=\nobreak y$ and $\ol{h}(x,y)=x$ whenever $(x,y)\in Q$.
It is trivial that the surjectivity of~$h$ implies the one of~$\ol{h}$.

Now suppose that~$h\in\WD$ and let $(x,y)\in Q$ and $x_0,x_1\in P$ such that $\ol{h}(x,y)\leq x_0\vee x_1$; that is, $x\leq x_0\vee x_1$.
It follows that $h(y)=f(x)\leq f(x_0)\vee f(x_1)$.
Since~$h$ is weakly distributive, there are $y_0,y_1\in L$ such that $y\leq y_0\vee y_1$ and each $h(y_i)\leq f(x_i)$.
Since~$h$ is surjective, we may enlarge each~$y_i$ in such a way that each $h(y_i)=f(x_i)$.
It follows that $(x,y)\leq(x_0,y_0)\vee(x_1,y_1)$ with each $(x_i,y_i)\in Q$ and $\ol{h}(x_i,y_i)=x_i$\,, thus completing the verification of pullback transfer for~$\ol{h}$.
\end{proof}

The following lemma enables us to simplify the definition of projectivity in the presence of pullback transfer and under mild conditions on~$\cC$.

\begin{lemma}\label{D:PullTransfProj}
Let~$\cC$ be an abstract class of lattices, closed under sublattices and nonempty finite products, and let~$\xF$ be a class of surjective lattice homomorphisms satisfying pullback transfer.
Then a lattice~$P$ is $\xF$-projective with respect to~$\cC$ if{f} for every $Q\in\cC$, every map $h\colon Q\twoheadrightarrow P$ in~$\xF$ is a lattice retraction \pup{i.e., there is a lattice embedding $f\colon P\hookrightarrow Q$ such that $h\circ f=\id_P$}.
\end{lemma}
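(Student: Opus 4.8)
The plan is to deduce both directions directly from the definition of $\xF$-projectivity, together with the explicit form of pullbacks of lattices used in the proof of Lemma~\ref{L:WD2PullTransf}; pullback transfer itself is a hypothesis here, so no new substantive ingredient is needed. For the direct implication, suppose $P$ is $\xF$-projective with respect to~$\cC$, and let $Q\in\cC$ together with a map $h\colon Q\twoheadrightarrow P$ in~$\xF$ be given. I would apply the definition of $\xF$-projectivity to the surjection $h$, taking $L:=Q$, $K:=P$, and $f:=\id_P$; this yields a homomorphism of partial lattices $g\colon P\to Q$ with $h\circ g=\id_P$. Since $P$ is a lattice, $g$ is a lattice homomorphism, and it is an order-embedding, because $g(a)\leq g(b)$ forces $a=h(g(a))\leq h(g(b))=b$; hence $g$ is a lattice embedding splitting $h$, so $h$ is a lattice retraction.

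For the converse, assume that every $h\colon Q\twoheadrightarrow P$ in~$\xF$ with $Q\in\cC$ is a lattice retraction, and fix $K,L\in\cC$, a homomorphism $h\colon L\twoheadrightarrow K$ in~$\xF$, and a homomorphism of partial lattices $f\colon P\to K$ (which is a lattice homomorphism, $P$ being a lattice). I would form the pullback $Q:=P\mathbin{\Pi}_{f,h}L=\setm{(x,y)\in P\times L}{f(x)=h(y)}$, a sublattice of $P\times L$, with canonical projections $\ol{h}\colon Q\to P$ and $\ol{f}\colon Q\to L$; by closure of~$\cC$ under nonempty finite products and sublattices, $Q\in\cC$. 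By pullback transfer, $\ol{h}$ lies in~$\xF$, hence in particular is surjective; so by hypothesis $\ol{h}$ is a lattice retraction, via a lattice embedding $j\colon P\hookrightarrow Q$ with $\ol{h}\circ j=\id_P$. The composite $g:=\ol{f}\circ j\colon P\to L$ is then a homomorphism of partial lattices, and, using the commutativity $h\circ\ol{f}=f\circ\ol{h}$ of the pullback square, $h\circ g=h\circ\ol{f}\circ j=f\circ\ol{h}\circ j=f$; thus $g$ witnesses $\xF$-projectivity of~$P$ for the data $(K,L,h,f)$.

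This is essentially bookkeeping, and I expect no real obstacle. The two points that must be got right are that the map controlled by pullback transfer is the projection $\ol{h}\colon Q\to P$, rather than $\ol{f}\colon Q\to L$, so that the split surjection obtained really has~$P$ as its codomain; and that the pullback~$Q$ belongs to~$\cC$, which is the one place where the closure hypotheses on~$\cC$ (and the membership of~$P$ in~$\cC$, as holds in the intended applications) come into play. The remaining verifications — that the relevant composites are homomorphisms of partial lattices, and that a one-sided inverse of a surjective lattice homomorphism is automatically an order-embedding — are routine.
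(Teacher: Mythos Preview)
Your proof is correct and follows essentially the same approach as the paper: the forward direction applies $\xF$-projectivity to $\id_P$, and the converse forms the pullback $Q=P\mathbin{\Pi}_{f,h}L$, uses closure of~$\cC$ and pullback transfer to get $Q\in\cC$ and $\ol{h}\in\xF$, splits~$\ol{h}$, and composes with~$\ol{f}$. You are in fact slightly more explicit than the paper in two places: you spell out why the section obtained in the forward direction is an order-embedding, and you flag that the argument tacitly requires $P\in\cC$ (needed both to take $K=P$ in the forward direction and to have $P\times L\in\cC$ in the converse), a point the paper leaves implicit.
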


\begin{proof}
If~$P$ is $\xF$-projective with respect to~$\cC$, then it trivially satisfies the given condition (consider the identity map on~$P$).

Suppose, conversely, that the given condition holds.
Let $K,L\in\cC$, with lattice homomorphisms $h\colon L\twoheadrightarrow K$ in~$\xF$ and $f\colon P\to K$.
Consider the pullback $Q=P\mathbin{\Pi}_{f,h}L$ (cf. Figure~\ref{Fig:Pullback}).
Since~$Q$ is a sublattice of $P\times L$ and by assumption on~$\cC$, $Q$ belongs to~$\cC$.
Furthermore, by assumption on~$\xF$, $\ol{h}$ belongs to~$\xF$.
Now our assumption implies the existence of a lattice embedding $g\colon P\hookrightarrow Q$ such that $\ol{h}\circ g=\id_P$.
Observe that $h\circ\ol{f}\circ g=f\circ\ol{h}\circ g=f$, thus completing the verification of the $\xF$-projectivity of~$P$ with respect to~$\cC$.
\end{proof}

\begin{definition}\label{D:ChoiceFct}
Let~$\gf$ be a set-valued function with domain a set~$X$.
A \emph{choice function} for~$\gf$ is a function~$f$ with domain~$X$ such that $f(x)\in\gf(x)$ for every $x\in X$.
If $\gf\colon X\to\Pow Y$, $X$ and~$Y$ are objects in a (concrete) category~$\cC$, and~$f$ is a morphism in~$\cC$, we then say that~$f$ is a \emph{choice morphism} for~$\gf$.
We extend this terminology to special classes of morphisms, such as choice homomorphisms, choice embeddings, and so on.
\end{definition}

\begin{definition}\label{D:IdProj}
Let~$\cC$ be a class of lattices.
A partial lattice~$P$ is
\begin{itemize}
\item
\emph{\idproj\ with respect to~$\cC$} if for every $L\in\cC$, every homomorphism $\gf\colon P\to\Id L$ of partial lattices, and every choice function~$f_0$ for~$\gf$, there exists a choice homomorphism of partial lattices $f\colon P\to L$ for~$\gf$ such that $f_0\leq f$.

\item
\emph{sharply transferable with respect to~$\cC$} if for every $L\in\cC$ and every embedding $\gf\colon P\hookrightarrow\Id L$ of partial lattices, there exists a choice homomorphism of partial lattices $f\colon P\to L$ for~$\gf$ such that $f(x)\in\gf(y)$ implies $x\leq y$ whenever $x,y\in P$ --- we then say that~$f$ satisfies the \emph{transfer condition with respect to~$\gf$}.

\item
\emph{transferable with respect to~$\cC$} if for every $L\in\cC$, if~$P$ embeds into~$\Id L$ as a partial lattice, then it also embeds into~$L$ as a partial lattice.
\end{itemize}
\end{definition}

Observe that in the context of Definition~\ref{D:IdProj}, if a choice homomorphism $f\colon P\to\nobreak L$ for~$\gf$ satisfies the transfer condition with respect to~$\gf$, then it is an embedding, and every choice homomorphism~$g$ for~$\gf$, such that $f\leq g$, also satisfies the transfer condition with respect to~$\gf$.
Also, transferability and sharp transferability of~$P$ with respect to~$\cC$ make sense only in case~$P$ embeds into the ideal lattice of some lattice in~$\cC$ (otherwise, both transferability and sharp transferability are vacuously satisfied).

The following result is essentially contained in Baker and Hales \cite[Theorem~A]{BakHal74}.
We include a proof, with an amendment regarding weak distributivity, for convenience.

\begin{proposition}[Baker and Hales, 1974]\label{P:BaHaRepr}
For every lattice~$L$, the ideal lattice~$\Id L$ of~$L$ is the image, under a weakly distributive lattice homomorphism, of a sublattice of an ultrapower of~$L$.
\end{proposition}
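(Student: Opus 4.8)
The plan is to exhibit $\Id L$ as a quotient of a sublattice of the ultrapower $L^{I}/\cU$, where $I=\fine L$ is directed by inclusion and $\cU$ is any ultrafilter on~$I$ containing every ``tail'' $\upw i=\setm{j\in\fine L}{i\subseteq j}$; such a~$\cU$ exists because the tails form a proper filter (any two finite subsets of~$L$ lie in their union). I handle elements of $L^{I}/\cU$ through representatives $f\colon I\to L$, writing $[f]$ for the class. The sublattice in question is $S=\setm{[f]}{\text{there is }U\in\cU\text{ with }f(i)\leq f(j)\text{ whenever }i\subseteq j\text{ in }U}$; since pointwise joins and meets of two functions that are isotone on a common member of~$\cU$ are again isotone on that member, $S$ is a sublattice of $L^{I}/\cU$. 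The map is $\pi([f])=\setm{x\in L}{x\leq f(i)\text{ for }\cU\text{-many }i\in I}$. First I would record that $\pi$ is well defined (it depends only on~$[f]$), isotone, and that $\pi([f])$ is always a downward-closed, finite-join-closed subset of~$L$; and that for $[f]\in S$ with witness~$U$ one has $\pi([f])=\dnw\setm{f(i)}{i\in U}$, which is in particular \emph{nonempty}, since $f(i_0)\in\pi([f])$ for every $i_0\in U$ (if $i\in U$ and $i\supseteq i_0$ then $f(i_0)\leq f(i)$, and $U\cap\upw i_0\in\cU$). Thus $\pi$ maps~$S$ into~$\Id L$.

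Next I would verify that $\pi\res S$ is a lattice homomorphism. Preservation of meets is immediate, as a finite intersection of members of~$\cU$ again belongs to~$\cU$. For joins, $\pi([f])\vee\pi([g])\subseteq\pi([f]\vee[g])$ is clear from isotonicity of~$\pi$; conversely, if $z\leq f(i)\vee g(i)$ for $\cU$-many~$i$, one picks a single~$i_0$ lying in the witnessing sets of both~$f$ and~$g$ with $z\leq f(i_0)\vee g(i_0)$, and since $f(i_0)\in\pi([f])$ and $g(i_0)\in\pi([g])$ this gives $z\in\pi([f])\vee\pi([g])$. For surjectivity, given an ideal~$a$ of~$L$, fix $a_0\in a$ and set $\hat a(i)=\bigvee(a\cap i)$ for $i\supseteq\set{a_0}$ (and $\hat a(i)=a_0$ otherwise); then $\hat a$ is isotone on $\upw\set{a_0}$, so $[\hat a]\in S$, and $\pi([\hat a])=a$: each $\bigvee(a\cap i)$ lies in~$a$ (finite joins stay in an ideal), giving~$\subseteq$, while for $x\in a$ one has $x\leq\bigvee(a\cap\set{a_0,x})=\hat a(\set{a_0,x})$, giving~$\supseteq$.

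The main point, and the only place where care is required, is weak distributivity of~$\pi$. Suppose $[f]\in S$ is witnessed by $U\in\cU$ and $\pi([f])\subseteq y_0\vee y_1$ for ideals $y_0,y_1$ of~$L$. Since $f(i)\in\pi([f])$ for each $i\in U$, we may choose $u_i\in y_0$ and $v_i\in y_1$ with $f(i)\leq u_i\vee v_i$. Setting simply $g_0(i)=u_i$, $g_1(i)=v_i$ does \emph{not} work: in a non-distributive~$L$ one cannot replace $u_i,v_i$ by elements below~$f(i)$, so nothing forces $g_0,g_1$ to be isotone on any member of~$\cU$, i.e.\ to represent elements of~$S$. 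The remedy is to accumulate over smaller indices: put $g_0(i)=\bigvee\setm{u_j}{j\in U,\ j\subseteq i}$ and $g_1(i)=\bigvee\setm{v_j}{j\in U,\ j\subseteq i}$ for $i\in U$ (finite joins, as $i$ has finitely many subsets), extended arbitrarily off~$U$. Then each~$g_k$ is isotone on~$U$, so $[g_k]\in S$; each $g_k(i)$ is a finite join of members of the ideal~$y_k$, so $\pi([g_k])\subseteq y_k$; and the term $j=i$ gives $f(i)\leq u_i\vee v_i\leq g_0(i)\vee g_1(i)$ for every $i\in U$, whence $[f]\leq[g_0]\vee[g_1]$. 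This shows $\pi\res S\colon S\to\Id L$ is a surjective weakly distributive lattice homomorphism, which is the assertion (the amendment over Baker and Hales being precisely the weak-distributivity clause). The obstacle I expect to be the crux is this last balancing act: membership in~$S$ imposes a coherence (isotonicity) constraint linking the values of~$g_k$ at different indices, whereas $f(i)\leq g_0(i)\vee g_1(i)$ must hold index by index, and accumulating over $\set{j\in U:j\subseteq i}$ is exactly what reconciles the two.
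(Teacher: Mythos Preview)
Your proof is correct and follows essentially the same route as the paper's: the same index set $\fine L$ with its tail ultrafilter, the sublattice of (classes of) isotone maps, and the same accumulation device for weak distributivity (the paper phrases it as an induction over the lower-finite poset~$\gL$, joining each new choice with all previous ones, which unwinds to your $g_k(i)=\bigvee_{j\in U,\,j\subseteq i}u_j$). The only cosmetic difference is that the paper works with globally isotone representatives in~$L^{\gL}$ before passing to the quotient, whereas you define~$S$ directly in the ultrapower via representatives isotone on some member of~$\cU$.
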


\begin{proof}
The set~$\gL$ of all finite subsets of~$L$, partially ordered by set inclusion, is \emph{lower finite}, that is, the subset $\dnw\gl=\setm{\xi\in\gL}{\xi\subseteq\gl}$ is finite for every $\gl\in\gL$.
Let~$\cU$ be an ultrafilter on~$\gL$ such that each subset $\upw\gl=\setm{\xi\in\gL}{\gl\subseteq\xi}$, for $\gl\in\gL$, belongs to~$\cU$.
Denote by~$L^{\gL}/{\cU}$ the ultrapower of~$L$ by~$\cU$, and by $\rho\colon L^{\gL}\twoheadrightarrow L^{\gL}/{\cU}$ the canonical projection.

The set~$S$ of all isotone maps from~$\gL$ to~$L$ is a sublattice of~$L^{\gL}$, thus the set $T=\rho[S]$ is a sublattice of $L^{\gL}/{\cU}$.
Since all elements of~$\cU$ are cofinal subsets of~$\gL$, we obtain that for every $x\in S$, all the subsets~$x[U]$, where $U\in\cU$, generate the same ideal of~$L$, which we can thus denote by~$\pi(x/{\cU})$ (where~$x/{\cU}$ denotes the equivalence class of~$x$ modulo~$\cU$).
The map~$\pi$ is a lattice homomorphism from~$T$ to~$\Id L$.
The situation is illustrated on Figure~\ref{Fig:BakHal}.

Let~$\bx\in\Id L$ and pick $o\in\bx$.
The assignment $\gl\mapsto o\vee\bigvee(\gl\cap\bx)$ (where we define $o\vee\bigvee\es=o$) defines an isotone map $x\colon\gL\to L$, and $\pi(x/{\cU})=\bx$.
Hence, $\pi$ is surjective.

We claim that~$\pi$ is weakly distributive.
Let $x\in S$ and let $\ba,\bb\in\Id L$ such that $\pi(x/{\cU})\leq\ba\vee\bb$.
The latter inequality means that $x_{\gl}\in\ba\vee\bb$ for every $\gl\in\gL$.
By using the lower finiteness of~$\gL$, we can construct inductively~$a_{\gl}$ and~$b_{\gl}$, for $\gl\in\gL$, as follows.
Suppose that $a_{\xi}\in\ba$ and $b_{\xi}\in\bb$ for every $\xi<\gl$.
Since $x_{\gl}\in\ba\vee\bb$, there are $a_{\gl}\in\ba$ and $b_{\gl}\in\bb$ such that $x_{\gl}\leq a_{\gl}\vee b_{\gl}$.
By joining~$a_{\gl}$ with $\bigvee_{\xi<\gl}a_{\xi}$ and~$b_{\gl}$ with $\bigvee_{\xi<\gl}b_{\xi}$\,, we may assume that $a_{\xi}\leq a_{\gl}$ and $b_{\xi}\leq b_{\gl}$ for every $\xi<\gl$.
This way, the maps~$a$ and~$b$ both belong to~$S$, and $x\leq a\vee b$ (thus $x/{\cU}\leq a/{\cU}\vee b/{\cU}$), with $\pi(a/{\cU})\subseteq\ba$ and $\pi(b/{\cU})\subseteq\bb$.
This completes the proof of our claim.
\end{proof}

The following result investigates the connections between $\WD$-projectivity and (sharp) transferability relative to a class~$\cC$ of lattices, giving equivalences under certain conditions on~$\cC$.

\goodbreak

\begin{theorem}\label{T:Transf2Proj}
Let~$\cC$ be an abstract class of lattices and let~$P$ be a partial lattice.
Consider the following statements:
\begin{enumerater}
\item\label{PWDIdProj}
$P$ is \idproj\ with respect to~$\cC$.

\item\label{PWDjProjC}
$P$ is $\WDj$-projective with respect to~$\cC$.

\item\label{PWDProjC}
$P$ is $\WD$-projective with respect to~$\cC$.

\item\label{PtransfC}
$P$ is sharply transferable with respect to~$\cC$.

\item\label{PwtransfC}
$P$ is transferable with respect to~$\cC$.

\end{enumerater}

Then the following statements hold:
\begin{enumeratea}
\item\label{CIfProj2Proj}
The implications \eqref{PWDIdProj}$\Rightarrow$\eqref{PWDjProjC}$\Rightarrow$\eqref{PWDProjC} and \eqref{PtransfC}$\Rightarrow$\eqref{PwtransfC} always hold.

\item\label{CProj2IdProj}
If~$P$ is finite and~$\cC$ is closed under sublattices, images under weakly distributive lattice homomorphisms, and ultraproducts, then~\eqref{PWDProjC} implies~\eqref{PWDIdProj}.

\item\label{CIdProj2Transf}
If~$P$ is finite, then~\eqref{PWDIdProj} implies~\eqref{PtransfC}.

\item\label{CTransf2Proj}
If $P\in\cC$ and~$\cC$ is closed under sublattices and nonempty finite products, then~\eqref{PtransfC} implies~\eqref{PWDProjC}.
\end{enumeratea}

\end{theorem}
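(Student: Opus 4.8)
The plan is to treat the four items roughly in order of difficulty, establishing \eqref{CIfProj2Proj}, then \eqref{CIdProj2Transf} and \eqref{CTransf2Proj}, and finally \eqref{CProj2IdProj}, the only genuinely technical step. The unifying device is the translation, for a surjective weakly distributive \jh\ $h\colon L\twoheadrightarrow K$ between lattices, of the preimage map $h^{-1}\colon\Id K\to\Id L$ into a \emph{lattice} homomorphism (Wehrung \cite[Exercise~7.7]{STA1-7}). For item~\eqref{CIfProj2Proj}: the implication \eqref{PWDjProjC}$\Rightarrow$\eqref{PWDProjC} is immediate since every lattice homomorphism is a \jh, so $\WD\subseteq\WDj$ and $\xF$-projectivity is antitone in~$\xF$; the implication \eqref{PtransfC}$\Rightarrow$\eqref{PwtransfC} follows from the observation, recorded after Definition~\ref{D:IdProj}, that a choice homomorphism satisfying the transfer condition is automatically an embedding. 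For \eqref{PWDIdProj}$\Rightarrow$\eqref{PWDjProjC}, given $h\colon L\twoheadrightarrow K$ in~$\WDj$ with $K,L\in\cC$ and a partial lattice homomorphism $f\colon P\to K$, I would form the partial lattice homomorphism $\gf\colon P\to\Id L$, $x\mapsto h^{-1}(\dnw f(x))$ (composite of~$f$, the principal-ideal embedding, and~$h^{-1}$), pick by surjectivity of~$h$ a choice function~$f_0$ with $h(f_0(x))=f(x)$, and apply \idprojy\ to get a choice homomorphism $g\colon P\to L$ with $f_0\leq g$; then $h(g(x))\leq f(x)$ (as $g(x)\in\gf(x)$) while $h(g(x))\geq h(f_0(x))=f(x)$, so $h\circ g=f$.

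For item~\eqref{CIdProj2Transf}, with~$P$ finite and \idproj, and an embedding $\gf\colon P\hookrightarrow\Id L$ ($L\in\cC$): for each pair $(x,y)$ with $x\not\leq y$ the strict inclusion $\gf(x)\not\subseteq\gf(y)$ yields a witness $c_{x,y}\in\gf(x)\sd\gf(y)$; let $f_0(x)$ be the join, computed inside the ideal~$\gf(x)$, of the finitely many such~$c_{x,y}$ (chosen arbitrarily in~$\gf(x)$ if there is no such~$y$). Applying \idprojy\ gives a choice homomorphism $f\geq f_0$, and then $f(x)\in\gf(y)$ with $x\not\leq y$ would force $c_{x,y}\leq f(x)\in\gf(y)$, a contradiction; so~$f$ satisfies the transfer condition. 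For item~\eqref{CTransf2Proj}, note that $P\in\cC$ makes~$P$ a lattice, so Lemma~\ref{D:PullTransfProj} (with $\xF=\WD$, which has pullback transfer by Lemma~\ref{L:WD2PullTransf}) reduces the task to showing that every surjective weakly distributive lattice homomorphism $h\colon Q\twoheadrightarrow P$ with $Q\in\cC$ is a lattice retraction. As in item~\eqref{CIfProj2Proj}, $\gf\colon P\hookrightarrow\Id Q$, $x\mapsto h^{-1}(\dnw x)$, is a lattice embedding; by sharp transferability there is a choice homomorphism $f\colon P\to Q$ for~$\gf$ satisfying the transfer condition, and from $f(x)\in\gf(x)$ one gets $h(f(x))\leq x$, while $f(x)\in\gf(h(f(x)))$ and the transfer condition give $x\leq h(f(x))$; hence $h\circ f=\id_P$.

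The substantive step is item~\eqref{CProj2IdProj}. Given $L\in\cC$, a partial lattice homomorphism $\gf\colon P\to\Id L$, and a choice function~$f_0$, I would invoke Proposition~\ref{P:BaHaRepr}: there are a sublattice~$T$ of an ultrapower $L^{\gL}/{\cU}$ (with $\gL$ the finite subsets of~$L$, each $\upw\gl\in\cU$, and $T=\rho[S]$ for~$S$ the sublattice of isotone maps $\gL\to L$) and a surjective weakly distributive lattice homomorphism $\pi\colon T\twoheadrightarrow\Id L$. The closure hypotheses on~$\cC$ put both~$T$ and~$\Id L$ in~$\cC$, so $\WD$-projectivity furnishes a partial lattice homomorphism $\psi\colon P\to T$ with $\pi\circ\psi=\gf$. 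For each $x\in P$ pick an isotone representative $y_x\in S$ of~$\psi(x)$; then $\gf(x)$ is the ideal of~$L$ generated by $\setm{y_x(\gl)}{\gl\in\gL}$, so $y_x(\gl)\in\gf(x)$ always, and directedness of~$\gL$ with isotonicity of~$y_x$ yields $\gl_x\in\gL$ with $f_0(x)\leq y_x(\gl_x)$; set $\gl_0=\bigcup_{x\in P}\gl_x$. The finitely many relations (finite because~$P$ is finite) that express isotonicity of~$\psi$ and preservation by~$\psi$ of the defined joins and meets of~$P$ each hold in~$L^{\gL}/{\cU}$, hence, by {\L}o\'s's theorem, on $\cU$-large coordinate sets; their intersection with $\upw\gl_0$ is nonempty, and any~$\gl^*$ in it makes $f\colon x\mapsto y_x(\gl^*)$ an isotone partial lattice homomorphism with $f(x)\in\gf(x)$ and $f_0(x)\leq y_x(\gl_x)\leq y_x(\gl^*)=f(x)$.

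I expect item~\eqref{CProj2IdProj} to be the main obstacle: items~\eqref{CIfProj2Proj}, \eqref{CIdProj2Transf}, \eqref{CTransf2Proj} are essentially bookkeeping once the $h^{-1}$ translation is in hand, whereas in~\eqref{CProj2IdProj} one must convert the abstract lifting~$\psi$ through the Baker--Hales cover into an honest homomorphism $P\to L$ \emph{while simultaneously dominating the prescribed~$f_0$}, and reconciling the {\L}o\'s-theorem coordinate extraction with the requirement $f_0\leq f$ is exactly what forces the use of \emph{isotone} representatives in~$S$ together with the directedness of~$\gL$.
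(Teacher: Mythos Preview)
Your proposal is correct and follows essentially the same route as the paper's proof: the $h^{-1}$ translation for~\eqref{CIfProj2Proj} and~\eqref{CTransf2Proj}, the witness-join construction for~\eqref{CIdProj2Transf}, and the Baker--Hales cover plus \L o\'s-type coordinate extraction for~\eqref{CProj2IdProj} all match the paper's arguments. If anything, you are slightly more explicit than the paper in~\eqref{CProj2IdProj}, where you include isotonicity of~$\psi$ among the finitely many relations to be realized at a single~$\gl^*$ (the paper tracks only the defined joins and meets), and in~\eqref{CIdProj2Transf}, where you avoid the paper's appeal to a bottom element~$0_P$ by choosing~$f_0(x)$ arbitrarily in~$\gf(x)$ when no witness~$c_{x,y}$ exists.
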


\begin{figure}[htb]
 \[
 \xymatrix{
 S\,\ar@{_(->}[r]\ar@{->>}[d]^{\rho\res_S} & L^{\gL}\ar@{->>}[d]^{\rho} & & &
 S\,\ar@{_(->}[r]\ar@{->>}[d]^{\rho\res_S} & L^{\gL}\ar@{->>}[d]^{\rho}\\
 T\,\ar@{->>}[d]^{\pi}\ar@{^(->}[r] & L^{\gL}/{\cU} & & & 
 T\,\ar@{->>}[d]^{\pi}\ar@{^(->}[r] & L^{\gL}/{\cU}\\
 \Id L & & & P\ar[ru]_{\!\!\psi}\ar[r]_{\varphi}\ar[ruu]^{\dot{\psi}} & \Id L &
 }
 \]
\caption{Illustrating the proofs of Proposition~\ref{P:BaHaRepr} (left)
and Theorem~\ref{T:Transf2Proj}\eqref{CProj2IdProj} (right)}
\label{Fig:BakHal}
\end{figure}
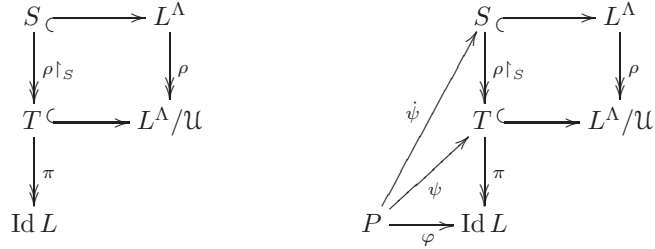

\begin{proof}
\eqref{CIfProj2Proj}.
The only nontrivial implication to be verified is \eqref{PWDIdProj}$\Rightarrow$\eqref{PWDjProjC}.
Let~$P$ be \idproj\ with respect to~$\cC$.
Let $K,L\in\cC$, let $f\colon P\to K$ be a homomorphism of partial lattices, and let $h\colon L\twoheadrightarrow K$ be a surjective weakly distributive \jh.
We set $\gf(x)=\setm{t\in L}{h(t)\leq f(x)}$, for any $x\in P$.
Since~$h$ is weakly distributive, $\gf$ is a homomorphism of partial lattices from~$P$ to~$\Id L$.
Since~$h$ is surjective, for each $x\in P$, there exists $g_0(x)\in L$ such that $h(g_0(x))=f(x)$.
In particular, $g_0$ is a choice function for~$\gf$.
By assumption, there is a choice homomorphism of partial lattices $g\colon P\to L$ for~$\gf$ such that $g_0\leq g$.
Let $x\in P$.
On the one hand, $g(x)\in\gf(x)$ means that $(h\circ g)(x)\leq f(x)$.
On the other hand, $g_0(x)\leq g(x)$ implies that $f(x)=(h\circ g_0)(x)\leq(h\circ g)(x)$.
Therefore, $h\circ g=f$.

\eqref{CProj2IdProj}.
Suppose that $P$ is $\WD$-projective with respect to~$\cC$, let~$L\in\cC$, and let $\gf\colon P\to\nobreak\Id L$ be a lattice homomorphism.
We apply Baker and Hales' representation of~$\Id L$ obtained from Proposition~\ref{P:BaHaRepr}.
We keep the notation of the proof of that lemma.
The situation is illustrated on Figure~\ref{Fig:BakHal}.
It follows from Proposition~\ref{P:BaHaRepr}, together with the assumptions on~$\cC$, that~$T$ and~$\Id L$ both belong to~$\cC$.

Since~$P$ is $\WD$-projective with respect to~$\cC$, there exists a lattice homomorphism $\psi\colon P\to\nobreak T$ such that $\gf=\pi\circ\psi$.
Since~$\rho$ is surjective, there is a map $\dot{\psi}\colon P\to S$ (not a homomorphism \emph{a priori}) such that $\psi=\rho\circ\dot{\psi}$.
Set $\dot{\psi}(x)=\vecm{\dot{\psi}_{\gl}(x)}{\gl\in\gL}$, for every $x\in P$.
Since~$\psi$ is a lattice homomorphism, all sets of the form
 \begin{align*}
 U^{\vee}_X&=\setm{\gl\in\gL}
 {\dot{\psi}_{\gl}\pI{\bigvee X}=\bigvee\dot{\psi}_{\gl}[X]}\,,
 &&\text{for }X\text{ in the domain of }\bigvee\,,\\
 U^{\wedge}_X&=\setm{\gl\in\gL}
 {\dot{\psi}_{\gl}\pI{\bigwedge X}=\bigwedge\dot{\psi}_{\gl}[X]}\,,
 &&\text{for }X\text{ in the domain of }\bigwedge\,,
 \end{align*}
belong to~$\cU$.
Since~$P$ is finite, the intersection~$U$, of all sets of the form~$U^{\vee}_X$ or~$U^{\wedge}_X$, belongs to~$\cU$.
Observe that
 \begin{equation}\label{Eq:dotpsialmhom}
 \dot{\psi}_{\gl}\text{ is a homomorphism of partial lattices from }
 P\text{ to }L\,,\text{ for every }\gl\in U\,.
 \end{equation}
Now let~$f_0\colon P\to L$ be a choice function for~$\gf$.
For each $x\in P$, $f_0(x)$ belongs to $\gf(x)=\pi(\psi(x))=\pi\pI{\dot{\psi}(x)/{\cU}}$, which is the ideal generated by the range of the map~$\dot{\psi}(x)$.
Hence, $f_0(x)\leq\dot{\psi}_{\gl}(x)$ for all large enough $\gl\in\gL$.
Since~$P$ is finite and~$U$ is cofinal in~$\gL$, there exists $\gl\in U$ such that $f_0(x)\leq\dot{\psi}_{\gl}(x)$ for every $x\in P$.
The map $f=\dot{\psi}_{\gl}$ is a choice function for~$\gf$, and $f_0\leq f$.
Furthermore, by~\eqref{Eq:dotpsialmhom}, $f$ is a lattice homomorphism.

\eqref{CIdProj2Transf}.
We suppose that~\eqref{PWDIdProj} holds.
Let $L\in\cC$ and let $\gf\colon P\hookrightarrow\Id L$ be an embedding of partial lattices.
Pick $o\in\gf(0_P)$, and pick $a_{x,y}\in\gf(x)\setminus\gf(y)$, for all $x,y\in P$ such that $x\nleq y$.
For every $x\in P$, it follows from the finiteness of~$P$ that we can define $f_0(x)=o\vee\bigvee\vecm{a_{x,y}}{y\in P\,,\ x\nleq y}$ (the right hand side of that expression being defined as being equal to~$o$ in case the big join is empty).
Since $o\in\gf(0_P)$ and each $a_{x,y}\in\gf(x)$, $f_0$ is a choice function for~$\gf$.
By our assumption, there is a choice homomorphism $f\colon P\to L$ for~$\gf$ such that $f_0\leq f$.
We need to prove that $x\nleq y$ implies that $f(x)\notin\gf(y)$, for all $x,y\in P$.
This holds indeed, because $a_{x,y}\leq f_0(x)\leq f(x)$ while $a_{x,y}\notin\gf(y)$.

\eqref{CTransf2Proj}.
By applying Lemmas~\ref{L:WD2PullTransf} and~\ref{D:PullTransfProj} to $\xF=\WD$, we see that it suffices to prove that for every $P\in\cC$ which is sharply transferable with respect to~$\cC$, and for every surjective weakly distributive lattice homomorphism $h\colon Q\twoheadrightarrow P$, there is a lattice embedding $f\colon P\hookrightarrow Q$ such that $h\circ f=\id_P$.
In fact, we shall prove a little more: indeed, $h$ will only need to be a surjective weakly distributive \emph{\jh}.

Define a map $\gf\colon P\to\nobreak\Id Q$ by setting $\gf(x)=\setm{t\in Q}{h(t)\leq x}$, for every $x\in P$.
It is trivial that~$\gf$ is a \mh.
Furthermore, since~$h$ is weakly distributive, $\gf$ is also a \jh; whence it is a lattice homomorphism.
Since~$h$ is surjective, $x$ is the largest element of $h[\gf(x)]$, for every $x\in P$; whence~$\gf$ is a lattice embedding.
Since~$P$ is sharply transferable with respect to~$\cC$, there is a lattice embedding $f\colon P\hookrightarrow Q$ such that
 \[
 f(x)\in\gf(y)\ \Longleftrightarrow\ x\leq y\,,\quad
 \text{for all }x,y\in P\,.
 \]
This means that $(h\circ f)(x)\leq y$ if{f} $x\leq y$, for all $x,y\in P$; whence $h\circ f=\id_P$.
\end{proof}

\begin{corollary}\label{C:Transf2Proj1}
Let~$\cC$ be an abstract class of lattices, closed under sublattices, nonempty finite products, images under weakly distributive lattice homomorphisms, and ultrapowers, and let~$P$ be a finite member of~$\cC$.
Then the following are equivalent:
\begin{enumeratea}
\item\label{CPWDProjC}
$P$ is $\WDj$-projective with respect to~$\cC$.

\item\label{CjPWDProjC}
$P$ is $\WD$-projective with respect to~$\cC$.

\item\label{CPWDIdProj}
$P$ is \idproj\ with respect to~$\cC$.

\item\label{CPtransfC}
$P$ is sharply transferable with respect to~$\cC$.
\end{enumeratea}
\end{corollary}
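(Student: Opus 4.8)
The plan is to obtain the corollary as a direct assembly of the pieces of Theorem~\ref{T:Transf2Proj}: the four statements \eqref{CPWDProjC}, \eqref{CjPWDProjC}, \eqref{CPWDIdProj}, \eqref{CPtransfC} are, respectively, statements \eqref{PWDjProjC}, \eqref{PWDProjC}, \eqref{PWDIdProj}, \eqref{PtransfC} of that theorem, so there is nothing genuinely new to prove. The work consists in checking that, under the standing assumptions (namely, $P$ a finite member of~$\cC$, with $\cC$ closed under sublattices, nonempty finite products, images under weakly distributive lattice homomorphisms, and ultrapowers), the hypotheses of each clause of Theorem~\ref{T:Transf2Proj} that we invoke are met, and then in arranging the resulting implications into a single cycle through all four statements.

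Concretely, I would chain the implications as follows. First, \eqref{CPWDIdProj}$\Rightarrow$\eqref{CPWDProjC}$\Rightarrow$\eqref{CjPWDProjC}: these are the always-valid implications \eqref{PWDIdProj}$\Rightarrow$\eqref{PWDjProjC}$\Rightarrow$\eqref{PWDProjC} of Theorem~\ref{T:Transf2Proj}\eqref{CIfProj2Proj}. Next, \eqref{CjPWDProjC}$\Rightarrow$\eqref{CPWDIdProj} by Theorem~\ref{T:Transf2Proj}\eqref{CProj2IdProj}, which requires $P$ finite together with closure of~$\cC$ under sublattices, weakly distributive images, and ultraproducts. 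Then \eqref{CPWDIdProj}$\Rightarrow$\eqref{CPtransfC} by Theorem~\ref{T:Transf2Proj}\eqref{CIdProj2Transf}, for which the finiteness of~$P$ alone suffices. Finally \eqref{CPtransfC}$\Rightarrow$\eqref{CjPWDProjC} by Theorem~\ref{T:Transf2Proj}\eqref{CTransf2Proj}, using $P\in\cC$ and closure of~$\cC$ under sublattices and nonempty finite products. Together these give \eqref{CPWDProjC}$\Leftrightarrow$\eqref{CjPWDProjC}$\Leftrightarrow$\eqref{CPWDIdProj} and \eqref{CPWDIdProj}$\Rightarrow$\eqref{CPtransfC}$\Rightarrow$\eqref{CjPWDProjC}, hence the equivalence of all four.

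The only point requiring a word of care --- and the closest thing here to an obstacle --- is that Theorem~\ref{T:Transf2Proj}\eqref{CProj2IdProj} is stated with closure of~$\cC$ under \emph{ultraproducts}, while the corollary assumes only closure under \emph{ultrapowers}. I would note that the proof of that clause uses its closure hypothesis only through Proposition~\ref{P:BaHaRepr}, which exhibits $\Id L$ as the image, under a weakly distributive lattice homomorphism, of a sublattice of an \emph{ultrapower} of~$L$; hence closure of~$\cC$ under ultrapowers, sublattices, and weakly distributive images already forces both~$T$ and~$\Id L$ into~$\cC$, which is all the argument needs. So the weaker hypothesis of the corollary suffices, and the cited implication applies. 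Beyond this, the proof is purely formal; the one remaining thing to double-check is simply that the four listed implications really close up into a cycle visiting each of \eqref{CPWDProjC}--\eqref{CPtransfC}, which they do.
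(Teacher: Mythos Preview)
Your proposal is correct and is exactly the intended derivation: the paper states the corollary without proof, as an immediate consequence of Theorem~\ref{T:Transf2Proj}, and your chaining of clauses \eqref{CIfProj2Proj}--\eqref{CTransf2Proj} is the natural way to assemble it. Your observation about ultrapowers versus ultraproducts is the one genuine point to check, and you handle it correctly: the proof of Theorem~\ref{T:Transf2Proj}\eqref{CProj2IdProj} invokes closure only through Proposition~\ref{P:BaHaRepr}, which produces an ultrapower of~$L$, so closure under ultrapowers (together with sublattices and weakly distributive images) suffices.
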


In particular, Corollary~\ref{C:Transf2Proj1} applies to the case where~$\cC$ is a variety of lattices.
It also applies to the case where~$\cC$ is the class of all lattices of finite length within a given variety.
Nevertheless, we will see in Section~\ref{S:RelCpl} that Corollary~\ref{C:Transf2Proj1} does not extend to more general, although natural, classes of lattices.
We will also see that even for finite distributive lattices, transferability and sharp transferability, with respect to a given lattice variety, are distinct concepts (cf. Proposition~\ref{P:D4transf}).

A direct application of Theorems~\ref{T:Transf2Proj} and~\ref{T:IdProjDLat} yields the following.

\begin{corollary}\label{C:IdProjDLat}
Every finite distributive lattice is $\WDj$-projective, \idproj, and sharply transferable, all with respect to~$\cD$.
\end{corollary}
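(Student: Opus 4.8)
The plan is to obtain all three conclusions from Theorem~\ref{T:IdProjDLat} by running it through Theorem~\ref{T:Transf2Proj} (equivalently, through Corollary~\ref{C:Transf2Proj1}). First I would record that the variety~$\cD$ satisfies every closure hypothesis needed: as a variety, it is closed under sublattices, nonempty finite products, homomorphic images---in particular images under weakly distributive lattice homomorphisms, which are surjective lattice homomorphisms onto lattices---and ultraproducts, hence under ultrapowers. Also, a finite distributive lattice~$P$ lies in~$\cD$.

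Next, Theorem~\ref{T:IdProjDLat} gives that~$P$ is sharply transferable with respect to~$\cD$; that is, statement~\eqref{PtransfC} of Theorem~\ref{T:Transf2Proj} holds with $\cC=\cD$. From this I would chase the implications of Theorem~\ref{T:Transf2Proj}: part~\eqref{CTransf2Proj}, applicable since $P\in\cD$ and~$\cD$ is closed under sublattices and nonempty finite products, yields that~$P$ is $\WD$-projective with respect to~$\cD$ (statement~\eqref{PWDProjC}); then part~\eqref{CProj2IdProj}, applicable since~$P$ is finite and~$\cD$ is closed under sublattices, weakly distributive images, and ultraproducts, yields that~$P$ is \idproj\ with respect to~$\cD$ (statement~\eqref{PWDIdProj}); finally part~\eqref{CIfProj2Proj} gives the implication~\eqref{PWDIdProj}$\Rightarrow$\eqref{PWDjProjC}, so that~$P$ is $\WDj$-projective with respect to~$\cD$. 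Combined with the sharp transferability already furnished by Theorem~\ref{T:IdProjDLat}, this establishes the three assertions. As a shortcut, one may instead just note that~$\cD$ meets the hypotheses of Corollary~\ref{C:Transf2Proj1}, that~$P$ is a finite member of~$\cD$, and that Theorem~\ref{T:IdProjDLat} supplies sharp transferability, which then forces the other equivalent conditions of that corollary.

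I do not expect any real obstacle. The one point deserving a remark is closure of~$\cD$ under images under weakly distributive lattice homomorphisms, which is immediate because such maps are in particular surjective lattice homomorphisms and varieties are closed under homomorphic images; the rest is a bookkeeping pass through Theorem~\ref{T:Transf2Proj}.
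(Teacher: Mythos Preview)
Your proposal is correct and matches the paper's own argument, which simply states that the corollary is a direct application of Theorems~\ref{T:Transf2Proj} and~\ref{T:IdProjDLat}. Your verification that the variety~$\cD$ satisfies all the closure hypotheses needed for the relevant parts of Theorem~\ref{T:Transf2Proj} (or equivalently Corollary~\ref{C:Transf2Proj1}) is exactly the bookkeeping the paper leaves implicit.
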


In particular, for every finite distributive lattice~$D$, every distributive lattice~$E$, and every surjective weakly distributive \jh\ $h\colon E\twoheadrightarrow D$, there is a lattice embedding $f\colon D\hookrightarrow E$ such that $h\circ f=\id_D$.

Similarly, a direct application of Theorems~\ref{T:Transf2Proj} and~\ref{T:ModNoeth} yields the following.

\begin{corollary}\label{C:Transf2Proj2}
Every finite distributive lattice is $\WDj$-projective, \idproj, and sharply transferable, all with respect to~$\Mnoeth$.
\end{corollary}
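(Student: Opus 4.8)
The plan is to read off all three conclusions from Theorem~\ref{T:ModNoeth} by feeding it into the appropriate implications of Theorem~\ref{T:Transf2Proj}, taken with $\cC=\Mnoeth$ and $P=D$, where $D$ ranges over the finite distributive lattices. The first step is a bookkeeping observation: since $D$, viewed as a partial lattice, has both $\bigvee$ and $\bigwedge$ defined everywhere on $\fine D$, a homomorphism of partial lattices from $D$ into the ideal lattice of any lattice is exactly a lattice homomorphism. Consequently, Theorem~\ref{T:ModNoeth} is nothing but the statement that $D$ is \idproj\ with respect to $\Mnoeth$; that is, statement~\eqref{PWDIdProj} of Theorem~\ref{T:Transf2Proj} holds for $P=D$ and $\cC=\Mnoeth$.

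From there the remaining properties follow mechanically. As $\Mnoeth$ is trivially an abstract class, I would apply Theorem~\ref{T:Transf2Proj}\eqref{CIfProj2Proj} to pass from \idprojy\ to $\WDj$-projectivity (hence, \emph{a fortiori}, to $\WD$-projectivity, since $\WD\subseteq\WDj$) with respect to $\Mnoeth$; and, since every finite distributive lattice is finite, I would apply Theorem~\ref{T:Transf2Proj}\eqref{CIdProj2Transf} to pass from \idprojy\ to sharp transferability with respect to $\Mnoeth$. Crucially, the two implications actually invoked here, namely parts~\eqref{CIfProj2Proj} and~\eqref{CIdProj2Transf} of Theorem~\ref{T:Transf2Proj}, require no closure hypotheses on $\cC$ beyond abstractness.

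The only point deserving a word of care --- and the reason one cannot simply quote Corollary~\ref{C:Transf2Proj1} --- is that $\Mnoeth$ is \emph{not} closed under ultraproducts: a nonprincipal ultrapower of the chain $\go$ contains an infinite ascending chain bounded above by the class of the diagonal map, hence lies outside $\Mnoeth$. This makes the implication \eqref{PWDProjC}$\Rightarrow$\eqref{PWDIdProj} of Theorem~\ref{T:Transf2Proj} unavailable for this $\cC$, which is precisely why the argument has to be \emph{anchored} at \idprojy\ (supplied directly by Theorem~\ref{T:ModNoeth}, i.e., by~\cite{ParWeh06}) rather than at projectivity. Since the implications we do use are unconditional, there is in fact no genuine obstacle: all the real content has already been absorbed into Theorem~\ref{T:ModNoeth}.
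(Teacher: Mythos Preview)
Your proposal is correct and follows exactly the route the paper intends: Theorem~\ref{T:ModNoeth} is precisely the \idprojy\ statement~\eqref{PWDIdProj} for finite distributive~$D$ and $\cC=\Mnoeth$, and then parts~\eqref{CIfProj2Proj} and~\eqref{CIdProj2Transf} of Theorem~\ref{T:Transf2Proj} yield $\WDj$-projectivity and sharp transferability, respectively. Your additional remark that~$\Mnoeth$ fails closure under ultrapowers (so that Corollary~\ref{C:Transf2Proj1} is not directly available) is a correct and useful clarification.
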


In particular, for every finite distributive lattice~$D$, every $M\in\Mnoeth$, and every surjective weakly distributive \jh\ $h\colon M\twoheadrightarrow D$, there is a lattice embedding $f\colon D\hookrightarrow M$ such that $h\circ f=\id_D$.

\section{Relatively complemented lattices}\label{S:RelCpl}

{}From now on we shall denote by~$\cR$ the class of all relatively complemented lattices, and by~$\cRm$ the class of all modular members of~$\cR$.
One of the consequences of the present section will be that the conclusion of Corollary~\ref{C:Transf2Proj1} does not extend to the class~$\cRm$\,.

\begin{theorem}\label{T:RCML}
Every finite distributive lattice is sharply transferable with respect to~$\cRm$\,.
\end{theorem}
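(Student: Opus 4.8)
The plan is to establish \idprojy\ of a finite distributive lattice~$D$ with respect to~$\cRm$ directly — this is stronger than sharp transferability by Theorem~\ref{T:Transf2Proj}\eqref{CIdProj2Transf}, but experience with Theorems~\ref{T:IdProjDLat} and~\ref{T:ModNoeth} suggests it is the natural target. Wait — the excerpt warns that sharp transferability with respect to~$\cRm$ is \emph{properly weaker} than \idprojy\ with respect to~$\cRm$, so we cannot hope to prove \idprojy. Instead we must work with sharp transferability directly: given a relatively complemented modular lattice~$L$ and an embedding $\gf\colon D\hookrightarrow\Id L$ of partial lattices, we must build a choice embedding $f\colon D\to L$ satisfying the transfer condition. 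The first step is to reduce to the \jirr\ elements of~$D$: since~$D$ is distributive, every element is the join of the \jirr\ elements below it, and a choice homomorphism is determined by its restriction to~$\Ji D$; so it suffices to assign to each $p\in\Ji D$ an element $f(p)\in\gf(p)$ such that the map $x\mapsto\bigvee\setm{f(p)}{p\in\Ji D\,,\ p\leq x}$ is well-defined, isotone, meet-preserving, and satisfies $f(x)\in\gf(y)\Rightarrow x\leq y$. For meet-preservation the key point is that for $p\in\Ji D$ and $x\in D$, $p\leq x$ fails exactly when $p\wedge x<p$, i.e. $p\wedge x\leq p_*$ where $p_*$ is the unique lower cover of~$p$; so we need $f(p)$ to ``avoid'' the ideals $\gf(y)$ for all $y\not\geq p$, equivalently to avoid $\gf(p_*)$, while still lying in $\gf(p)$.

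The heart of the argument will be a downward induction on~$\Ji D$ (ordered by the induced partial order) in which we choose the elements $f(p)\in L$ one at a time, using relative complementation at each step to ``separate'' $f(p)$ from the subalgebra already constructed. Concretely, suppose $f(q)$ has been defined for all $q\in\Ji D$ with $q>p$ (or with $q$ incomparable-and-already-treated), generating a finite sublattice $L'\subseteq L$; we want $f(p)\in\gf(p)\setminus\gf(p_*)$ that behaves correctly against the finitely many generators of~$L'$. Pick any $a\in\gf(p)\setminus\gf(p_*)$ to start (possible since $\gf$ is order-preserving and $\gf(p)\neq\gf(p_*)$ as $\gf$ is an embedding). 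Then use relative complements inside suitable intervals of~$L$ to adjust~$a$: replacing $a$ by $a\vee(\text{join of lower \jirr\ images})$ pushes it up so that the join-relations hold, while taking a relative complement of a troublesome meet inside the interval $[0,a]$ (or an appropriate sub-interval) kills unwanted coincidences — this is where $\cRm$ (as opposed to an arbitrary modular lattice) is essential, since we repeatedly need complements of elements relative to intervals, and modularity is what makes these complements interact predictably (e.g. via the isomorphism between $[b,a\vee b]$ and $[a\wedge b,a]$). I expect the bookkeeping of exactly which finitely many conditions must be imposed at stage~$p$, and verifying they can be met \emph{simultaneously} by a single choice of relative complement, to be the main obstacle; one likely device is to process $\Ji D$ in a linear extension of the reverse order and, at each stage, first form the join of all previously chosen $f(q)$ with $q>p$, call it $u_p$, then choose $f(p)$ as a relative complement, inside $[\,u_p\wedge v,\ v\,]$ for a carefully chosen $v\in\gf(p)$, of the element $u_p\wedge v$, arranging that $f(p)\vee u_p$ has the correct meet with each already-constructed generator and that $f(p)\notin\gf(p_*)$.

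Finally I would check the transfer condition: if $f(x)\in\gf(y)$ then for each $p\in\Ji D$ with $p\leq x$ we have $f(p)\leq f(x)\in\gf(y)$, so $f(p)\in\gf(y)$, hence (by the separation property $f(p)\notin\gf(p_*)$ and the fact that $\gf$ is an order-embedding of the \emph{partial} lattice~$D$) $p\leq y$; taking the join over all such~$p$ gives $x\leq y$. The reverse implication $x\leq y\Rightarrow f(x)\in\gf(y)$ is immediate from $f(p)\in\gf(p)\subseteq\gf(y)$ and the fact that $\gf(y)$ is an ideal closed under the finite joins involved. The upshot is a choice homomorphism of partial lattices that is automatically an embedding and satisfies the transfer condition, which is exactly sharp transferability of~$D$ with respect to~$\cRm$. (As a sanity check on the restriction to finite~$D$: finiteness of~$\Ji D$ is used both to make the inductive construction terminate and to express each $f(x)$ as a finite join, and relative complementation lives inside the finite sublattices of~$L$ generated at each stage.)
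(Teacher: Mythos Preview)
Your outline has the right shape --- reduce to $\Ji D$, pick representatives in the right ideals, use relative complements to disentangle them, define $f(x)=\bigvee\setm{f(p)}{p\in\Ji D,\ p\leq x}$ --- but two concrete points are missing and without them the argument does not close.

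First, the separation property you write down is too weak. You arrange $f(p)\notin\gf(p_*)$, where~$p_*$ is the lower cover of~$p$, and then claim that $f(p)\in\gf(y)$ forces $p\leq y$. But $p\nleq y$ only gives $y\leq p^{\dagger}$, where~$p^{\dagger}$ is the \emph{largest} element of~$D$ not above~$p$; since $p_*\leq p^{\dagger}$ (indeed $p_*=p\wedge p^{\dagger}$), the condition $f(p)\notin\gf(p_*)$ does not prevent $f(p)\in\gf(p^{\dagger})$. The paper fixes this at the outset by choosing the initial representatives $a_k\in\gf(p_k)\setminus\gf(p_k^{\dagger})$ and then checking that this stronger exclusion survives the relative-complement step.

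Second, and more seriously, you never say why the map $x\mapsto\bigvee\setm{f(p)}{p\leq x}$ is a \emph{meet}-homomorphism; you flag meet-preservation as ``the key point'' and then hope the inductive choice of complements will take care of it. It will not, unless you aim for something specific: the paper's device is to process $\Ji D=\set{p_1,\dots,p_m}$ in a linear extension of the order (so bottom-up, not the top-down pass you sketch), and at stage~$k$ replace~$a_k$ by a relative complement~$b_k$ of $a_k\wedge a_{<k}$ in the interval $[o,a_k]$. The payoff is that $b_k\wedge b_{<k}=o$ for every~$k$, so by modularity the family $(b_1,\dots,b_m)$ is \emph{independent} over~$o$ (Gr\"atzer \cite[Theorem~360]{LTF}). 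Independence plus the distributivity of~$D$ is exactly what turns $x\mapsto\bigvee_{p_i\leq x}b_i$ into a lattice homomorphism. Your top-down scheme, with its unspecified ``adjust~$a$ by a relative complement inside $[u_p\wedge v,v]$'', gives no analogue of independence, and without it you have no handle on meets.
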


\begin{proof}
Let~$D$ be a finite distributive lattice, let $M\in\cRm$\,, and let $\gf\colon D\hookrightarrow\Id M$ be a lattice embedding.
We need to find a choice homomorphism for~$\gf$ which satisfies the transfer condition with respect to~$\gf$.

Set $P=\Ji D=\set{p_1,\dots,p_m}$ in such a way that $p_i\leq p_j$ implies $i\leq j$, for all $i,j\in[1,m]$.
Moreover, fix $o\in\gf(0_D)$.
Since~$D$ is finite distributive, for any $p\in P$, there is a largest $p^{\dagger}\in D$ such that $p\nleq p^{\dagger}$.
Observe that $p\wedge p^{\dagger}=p_*$, the unique lower cover of~$p$.
Since~$\gf$ is an embedding, for any $k\in[1,m]$, there is $a_k\in\gf(p_k)\setminus\gf(p_k^{\dagger})$.
Replacing~$a_k$ by~$a_k\vee o$, we may assume that each $a_k\geq o$.
Set $a_{<k}=\bigvee_{1\leq i<k}a_i$\,, where the empty join is set equal to~$o$.
Since~$M$ is relatively complemented, the element $a_k\wedge a_{<k}$ has a relative complement~$b_k$ in the interval $[o,a_k]$, for each $k\in[1,m]$.

\setcounter{claim}{0}

\begin{claim}\label{Cl:Basicaka<k}
$a_k\wedge a_{<k}\in\gf(p_{k*})$ and $b_k\in\gf(p_k)\setminus\gf(p_k^{\dagger})$, for any $k\in[1,m]$.
\end{claim}

\begin{cproof}
Observing that $p_k\nleq p_i$ whenever $1\leq i<k$, we obtain that $p_i\leq p^{\dagger}_k$.
It follows that $a_{<k}\in\gf(p^{\dagger}_k)$, so $a_k\wedge a_{<k}\in\gf(p_k)\cap\gf(p^{\dagger}_k)=\gf(p_{k*})$.
{}From $b_k\leq a_k$ it follows that $b_k\in\gf(p_k)$.
Furthermore, from $a_k=(a_k\wedge a_{<k})\vee b_k$\,, $a_k\wedge a_{<k}\in\gf(p^{\dagger}_k)$, and $a_k\notin\gf(p^{\dagger}_k)$ it follows that $b_k\notin\gf(p^{\dagger}_k)$.
\end{cproof}

\begin{claim}\label{Cl:Indepbk}
The finite sequence $(b_1,\dots,b_m)$ is independent over~$o$.
\end{claim}

\begin{cproof}
Set $b_{<k}=\bigvee_{1\leq i<k}b_i$\,, for every $k\in[1,m]$.
Since~$M$ is modular, it suffices (cf. Gr\"atzer \cite[Theorem~360]{LTF}) to prove that $b_k\wedge b_{<k}=o$ whenever $1\leq k\leq m$.
Since $o\leq b_{<k}\leq a_{<k}$ and $a_{<k}\wedge b_k=o$, this is obvious.
\end{cproof}

By Claim~\ref{Cl:Indepbk} and since~$D$ is distributive, the map $f\colon D\to M$ defined by the rule $f(x)=\bigvee\vecm{b_i}{i\in[1,m]\,,\ p_i\leq x}$, for any $x\in D$, is a lattice homomorphism from~$D$ to~$M$.
Since each $b_i\in\gf(p_i)$, $f$ is a choice function for~$\gf$.

It remains to verify that~$f$ satisfies the transfer condition with respect to~$\gf$.
For this, it suffices in turn to verify that $f(p_k)\in\gf(x)$ implies that $p_k\leq x$, for each $(k,x)\in[1,m]\times D$.
Suppose, to the contrary, that $p_k\nleq x$.
This means that $x\leq p^{\dagger}_k$.
It follows that $f(p_k)\in\gf(p^{\dagger}_k)$, a contradiction since $b_k\leq f(p_k)$ and by Claim~\ref{Cl:Basicaka<k}.
\end{proof}

We do not know whether every finite distributive lattice is projective with respect to~$\cRm$\,.
Since no section of the canonical surjective homomorphism from~$\hDX$ onto~$\DX$ (cf. Figure~\ref{Fig:DandhatD}) is a lattice homomorphism, $\DX$ is not projective with respect to~$\cD$ (see Balbes~\cite{Balb67} for a much more general result).
Hence, $\DX$ it is also not projective with respect to~$\cM$.
However, $\cRm$ is properly contained in~$\cM$, so Balbes' result does not say anything about projectivity with respect to~$\cRm$\,, unless it already holds in~$\cM$.
And indeed, we can state the following.

\begin{theorem}\label{T:D4RelCplProj}
The lattice~$\DX$ is projective with respect to~$\cR$.
\end{theorem}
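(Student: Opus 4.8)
The plan is to lift an arbitrary homomorphism of partial lattices $f\colon\DX\to K$ along an arbitrary surjective lattice homomorphism $h\colon L\twoheadrightarrow K$, with $K,L\in\cR$, by lifting the lower and the upper ``square'' of~$\DX$ one interval at a time, glued at a single chosen preimage of~$f(c)$. Write the elements of~$\DX$ as $0<a,b<c<d,e<1$, where $a\wedge b=0$, $a\vee b=c=d\wedge e$ and $d\vee e=1$. First I would reduce to bounded lattices. Pick preimages $\widehat{0}\leq\widehat{1}$ in~$L$ of $f(0)\leq f(1)$ --- possible after replacing $\widehat{0}$ by $\widehat{0}\wedge\widehat{1}$. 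Since an interval of a member of~$\cR$ again belongs to~$\cR$, since $h$ maps $[\widehat{0},\widehat{1}]$ onto $[f(0),f(1)]$ (for $y$ in the latter, if $h(z)=y$ then $h\bigl((z\vee\widehat{0})\wedge\widehat{1}\bigr)=y$), and since $f$ maps~$\DX$ into $[f(0),f(1)]$, one may replace $L$ by $[\widehat{0},\widehat{1}]$ and $K$ by $[f(0),f(1)]$; thus we may assume that $L$ and $K$ are bounded and that $h$ and $f$ preserve~$0$ and~$1$.

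The heart of the matter is a statement about lifting a ``spanning square'': \emph{if $h\colon M\twoheadrightarrow N$ is a surjective homomorphism of bounded members of~$\cR$, and $\alpha,\beta\in N$ satisfy $\alpha\wedge\beta=0_N$ and $\alpha\vee\beta=1_N$, then there are $\widehat{\alpha},\widehat{\beta}\in M$ with $\widehat{\alpha}\wedge\widehat{\beta}=0_M$, $\widehat{\alpha}\vee\widehat{\beta}=1_M$, $h(\widehat{\alpha})=\alpha$ and $h(\widehat{\beta})=\beta$.} To prove it, start from any preimages $p,q\in M$ of $\alpha,\beta$. Replacing~$p$ by a relative complement $p_{1}$ of $p\wedge q$ in the interval $[0_M,p]$ keeps $h(p_{1})=\alpha$ (since $h(p\wedge q)=0_N$ and $p_{1}\vee(p\wedge q)=p$) and gives $p_{1}\wedge q\leq p_{1}\wedge(p\wedge q)=0_M$; so we may assume $p\wedge q=0_M$ from the start. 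Now --- and this is the delicate step --- replace~$q$ by a relative complement $t$ of $p\vee q$ in the interval $[q,1_M]$: then $p\vee t=1_M$, because $q\leq t$ yields $p\vee t\geq(p\vee q)\vee t=1_M$; moreover $h(t)=\beta$, because $h(t)\wedge 1_N=h\bigl(t\wedge(p\vee q)\bigr)=h(q)=\beta$; and disjointness survives, because $p\leq p\vee q$ yields $p\wedge t\leq(p\vee q)\wedge t=q$, hence $p\wedge t\leq p\wedge q=0_M$. So $\widehat{\alpha}=p$ and $\widehat{\beta}=t$ work; note that only the relative complementation of~$M$ is used.

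To assemble the lift, fix any preimage $\widehat{c}\in L$ of~$f(c)$. Both $h\colon[\widehat{c},1_L]\twoheadrightarrow[f(c),1_K]$ and $h\colon[0_L,\widehat{c}]\twoheadrightarrow[0_K,f(c)]$ are surjective homomorphisms of bounded members of~$\cR$ (surjectivity as above). Applying the spanning-square statement to the former, with $\alpha=f(d)$ and $\beta=f(e)$ (a pair of complements in $[f(c),1_K]$, since $f(d)\wedge f(e)=f(c)$ and $f(d)\vee f(e)=1_K$), produces $\widehat{d},\widehat{e}\in[\widehat{c},1_L]$ with $\widehat{d}\wedge\widehat{e}=\widehat{c}$, $\widehat{d}\vee\widehat{e}=1_L$, $h(\widehat{d})=f(d)$ and $h(\widehat{e})=f(e)$; applying it to the latter, with $\alpha=f(a)$ and $\beta=f(b)$, produces $\widehat{a},\widehat{b}\in[0_L,\widehat{c}]$ with $\widehat{a}\wedge\widehat{b}=0_L$, $\widehat{a}\vee\widehat{b}=\widehat{c}$, $h(\widehat{a})=f(a)$ and $h(\widehat{b})=f(b)$. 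Now define $g\colon\DX\to L$ by $g(0)=0_L$, $g(1)=1_L$, $g(c)=\widehat{c}$, $g(a)=\widehat{a}$, $g(b)=\widehat{b}$, $g(d)=\widehat{d}$, $g(e)=\widehat{e}$. Since $0_L\leq\widehat{a},\widehat{b}\leq\widehat{c}\leq\widehat{d},\widehat{e}\leq1_L$, the map $g$ is isotone, and it preserves every join and meet of~$\DX$: the relations $a\wedge b=0$, $a\vee b=c$, $d\wedge e=c$, $d\vee e=1$ hold by construction, while every other join or meet of~$\DX$ collapses, via those comparabilities, to one of the seven values of~$g$. Finally $h\circ g=f$ by construction, so $g$ is the required lift and~$\DX$ is projective with respect to~$\cR$.

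The step I expect to be the main obstacle is the second half of the spanning-square statement: forcing $p\vee q$ up to $1_M$ while not spoiling $p\wedge q=0_M$. Taking the relative complement of $p\vee q$ \emph{inside} the interval $[q,1_M]$ --- rather than inside all of~$M$ --- is exactly what makes both properties hold simultaneously; once the bounded reduction and this observation are in place, the rest is routine bookkeeping, and one even sees that $K\in\cR$ is not actually needed, only $L\in\cR$.
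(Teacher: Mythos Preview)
Your proof is correct, and your final remark that only $L\in\cR$ is needed is also borne out by the paper's argument. However, the paper takes a considerably shorter route. Rather than reducing to bounded intervals and lifting each of the two squares separately via your ``spanning-square'' lemma (four relative complements in all), the paper lifts only the single equation $a_0\vee a_1=b_0\wedge b_1$, and does so with \emph{one} relative complement. Starting from arbitrary preimages $u_0,x_1,y_0,y_1$ in~$L$ of $a_0,a_1,b_0,b_1$, it first enlarges each~$y_i$ to $y_i\vee u_0\vee x_1$, so that $u_0\vee x_1\leq y_0\wedge y_1$; then it replaces~$u_0$ by a relative complement~$x_0$ of $u_0\vee x_1$ in the interval $[u_0,\,y_0\wedge y_1]$. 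This immediately yields $x_0\vee x_1=x_0\vee(u_0\vee x_1)=y_0\wedge y_1$, and $h(x_0)=a_0$ follows because~$h$ identifies the endpoints of that interval. The assignment $\va_i\mapsto x_i$, $\vb_i\mapsto y_i$ then extends to a lattice homomorphism $g\colon\DX\to L$ with $h\circ g=f$. Your approach buys a somewhat stronger lift --- the images of~$0$ and~$1$ are the prescribed bounds, and the lifted pairs are genuine complements in their respective intervals --- but for the bare projectivity statement the paper's one-step adjustment suffices.
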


\begin{proof}
We must prove that whenever~$K$ and~$L$ are relatively complemented lattices, $h\colon L\twoheadrightarrow K$ is a surjective lattice homomorphism, and $a_0,a_1,b_0,b_1\in K$ such that $a_0\vee a_1=b_0\wedge b_1$, there are $x_0,x_1,y_0,y_1\in L$ such that each $h(x_i)=a_i$\,, each $h(y_i)=\nobreak b_j$\,, and $x_0\vee x_1=y_0\wedge y_1$.
Since~$h$ is surjective, there are $u_0,x_1,y_0,y_1\in L$ such that $h(u_0)=a_0$, $h(x_1)=a_1$, and each $h(y_i)=b_j$\,.
By replacing each~$y_i$ by $y_i\vee u_0\vee x_1$, we may assume that $u_0\vee x_1\leq y_0\wedge y_1$.
Since~$L$ is relatively complemented, the element $u_0\vee x_1$ has a relative complement~$x_0$ in the interval $[u_0,y_0\wedge y_1]$.
{}From $h(u_0\vee x_1)=h(y_0\wedge y_1)$ it follows that $h(x_0)=h(u_0)=a_0$\,.
Furthermore, by definition, $x_0\vee x_1=x_0\vee u_0\vee x_1=y_0\wedge y_1$.
\end{proof}

\begin{theorem}\label{T:D4NonIdProj}
The lattice~$\DX$ is not \idproj\ with respect to~$\cRm$\,.
\end{theorem}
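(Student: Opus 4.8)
The plan is to produce a relatively complemented modular lattice $L$, a homomorphism $\gf\colon\DX\to\Id L$ of partial lattices, and a choice function $f_0$ for $\gf$, for which no choice homomorphism $f\colon\DX\to L$ of partial lattices satisfies $f_0\le f$. Write the elements of $\DX$ as $0<p_0,p_1<c<q_0,q_1<1$, with $p_0\wedge p_1=0$, $p_0\vee p_1=c$, $q_0\wedge q_1=c$, $q_0\vee q_1=1$.

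Several structural constraints guide the construction and are worth isolating first. Since $\DX$ is a finite distributive lattice, Corollary~\ref{C:Transf2Proj2} shows that $\DX$ is \idproj\ with respect to $\Mnoeth$; hence $L$ must lie outside $\Mnoeth$, i.e.\ it must carry an infinite ascending chain with an upper bound. Next, if every value $\gf(x)$ were a principal ideal $\langle g(x)]$, then $g$ would itself be a choice homomorphism dominating \emph{every} choice function for $\gf$; hence $\gf$ must take at least one value that is a non-principal ideal. Finally, by Theorem~\ref{T:RCML} the lattice $\DX$ is sharply transferable with respect to $\cRm$, so $\gf$ cannot be an embedding; this does not conflict with Corollary~\ref{C:Transf2Proj1}, because $\cRm$ is not closed under sublattices (for instance $\DX$ embeds into a finite Boolean lattice, which belongs to $\cRm$, while $\DX$ is not relatively complemented).

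The analysis of a would-be lifting concentrates the whole difficulty in the element $c$. Indeed, the ``bottom square'' never obstructs: for any prescribed $f_0(0),f_0(p_0),f_0(p_1),f_0(c)$ one can choose $f(p_0)\in\gf(p_0)$ and $f(p_1)\in\gf(p_1)$ large enough that $f(p_i)\ge f_0(p_i)$, that $f(p_0)\wedge f(p_1)\ge f_0(0)$ (using $f_0(0)\in\gf(0)\subseteq\gf(p_i)$), and that $f(p_0)\vee f(p_1)\ge f_0(c)$ (using $\gf(c)=\gf(p_0)\vee\gf(p_1)$); one then puts $f(c):=f(p_0)\vee f(p_1)$ and $f(0):=f(p_0)\wedge f(p_1)$. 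So a lifting exists if and only if, for a suitable realized value $f(c)$ (which may be made to run cofinally through $\gf(c)$), one can find $f(q_0)\in\gf(q_0)$ and $f(q_1)\in\gf(q_1)$ with $f(q_i)\ge f(c)\vee f_0(q_i)$, with $f(q_0)\wedge f(q_1)=f(c)$, and with $f(q_0)\vee f(q_1)\ge f_0(1)$; choosing $f_0(1)\in\gf(1)\setminus\gf(c)$ rules out the degenerate solution $f(q_0)=f(q_1)=f(c)$, forcing the top square to be genuine.

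The construction of $L$, $\gf$ and $f_0$ is arranged so that this refined system has no solution. The incompatibility must be genuinely global: one cannot get away with forcing $f(c)$ up an \emph{explicitly generated} ascending chain, because a modular lattice generated by finitely many elements is finite (the free modular lattice on three generators is finite, by Dedekind). A natural device is to equip $L$ with an isotone additive dimension function $\dd$ into an ordered abelian group (so $\dd(x\vee y)+\dd(x\wedge y)=\dd(x)+\dd(y)$): then any choice homomorphism $f$ satisfies $\dd f(1)+\dd f(c)=\dd f(q_0)+\dd f(q_1)$, whence
$$\dd f_0(q_0)+\dd f_0(q_1)-\sup\setm{\dd t}{t\in\gf(1)}\ \le\ \dd f(c)\ \le\ \sup\setm{\dd t}{t\in\gf(c)}\,,$$
and the parameters are chosen so that these two bounds on $\dd f(c)$ are incompatible — here one uses that $L$ carries an infinite bounded ascending chain (so that $\gf(c)$ is non-principal with finite $\dd$-supremum) and that $\gf(q_0)$ and $\gf(q_1)$ are forced to overlap high up, keeping $\sup\setm{\dd t}{t\in\gf(1)}$ strictly below $\dd f_0(q_0)+\dd f_0(q_1)-\sup\setm{\dd t}{t\in\gf(c)}$. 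The technical heart of the proof, and the main obstacle, is to assemble such an $L$ together with the required ideals $\gf(x)$ and choice function $f_0$ while preserving modularity and relative complementation, and to verify the displayed estimate; once this is done, the contradiction rules out every choice homomorphism dominating $f_0$, so $\DX$ is not \idproj\ with respect to $\cRm$. Combined with Theorems~\ref{T:RCML} and~\ref{T:D4RelCplProj}, this shows that, relative to $\cRm$, both sharp transferability and projectivity are strictly weaker than \idprojy.
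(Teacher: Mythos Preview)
Your proposal is a strategy outline, not a proof: the ``technical heart'' that you yourself identify --- actually producing a lattice $L\in\cRm$, a homomorphism $\gf\colon\DX\to\Id L$, and a choice function $f_0$ for which the dimension inequalities are incompatible --- is never carried out. Without a concrete $L$ and explicit ideals, nothing has been proved. The paper does exactly this missing step: it takes $L=\Sub E$ for a countably-infinite-dimensional vector space~$E$ with basis $\{a_n,b_n:n<\omega\}$, writes down explicit subspaces $A_n,B_n,C_n,D_n$ generating ideals $\ba_0,\ba_1,\bb_0,\bb_1$ with $\ba_0\vee\ba_1=\bb_0\cap\bb_1$, and then shows by a direct inductive argument on basis vectors that no $Z=X_0+X_1=Y_0\cap Y_1$ with $X_i\in\ba_i$, $C_0\subseteq Y_0$, $D_0\subseteq Y_1$ can exist.

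There is also a mathematical error in your heuristics: it is not true that ``a modular lattice generated by finitely many elements is finite''. Dedekind's result concerns three generators only; the free modular lattice on four generators is infinite. More seriously, your dimension-function scheme is hard to realize as stated. In the natural candidate $L=\Sub E$ the dimension is integer-valued, so any infinite ascending chain has $\sup\dd=\infty$ and your displayed upper bound on $\dd f(c)$ is vacuous; getting a \emph{finite} supremum would force you into continuous-geometry territory, which you do not develop. The paper's argument avoids dimension counting entirely and instead exploits an explicit combinatorial obstruction (each induction step forces $a_n,b_n\in Z$, eventually contradicting $Z\subseteq A_m+B_m$).
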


\begin{proof}
For any field~$\Bbbk$, we consider distinct symbols~$a_n$ and~$b_n$, for $n<\go$, and the vector space~$E$ over~$\Bbbk$ with basis $\setm{a_n}{n<\go}\cup\setm{b_n}{n<\go}$.
For any family $\vecm{x_i}{i\in I}$ of elements of~$E$, we denote by $\seqm{x_i}{i\in I}$ the vector subspace of~$E$ generated by $\setm{x_i}{i\in I}$.
We also write $\seq{x_1,\dots,x_n}$ instead of $\seqm{x_i}{i\in[1,n]}$, and so on, for sequences enumerated by intervals of~$\go$.
We consider the complemented modular (thus relatively complemented) lattice $L=\Sub E$ of all subspaces of the vector space~$E$.

We set $A_n=\seq{a_0,\dots,a_n}$ and $B_n=\seq{b_0,\dots,b_n}$, for each $n<\go$, and further,
 \begin{align*}
 C_0&=\seq{a_0+b_0,a_1+b_1+a_0,a_2+b_2+a_1,
 a_3+b_3+a_2,\dots}\,,\\
 D_0&=\seq{a_0+b_0,a_1+b_1,a_2+b_2,a_3+b_3,\dots}\,,
 \end{align*}
and
 \begin{align*}
 C_{n+1}&=C_0+A_n+B_n\,,\\
 D_{n+1}&=D_0+A_n+B_n\,, 
 \end{align*}
for each $n<\go$.
Elementary calculations yield that
 \begin{align*}
 C_{n+1}&=\seq{a_0,b_0,\dots,a_n,b_n,a_{n+1}+b_{n+1},
 a_{n+2}+b_{n+2}+a_{n+1},a_{n+3}+b_{n+3}+a_{n+2},\dots}\,,\\
 D_{n+1}&=\seq{a_0,b_0,\dots,a_n,b_n,a_{n+1}+b_{n+1},
 a_{n+2}+b_{n+2},a_{n+3}+b_{n+3},\dots}\,, 
 \end{align*}
for all $n<\go$.
Hence, a further elementary calculation yields
 \[
 C_n\cap D_n=\seq{a_0,b_0,\dots,a_{n-1},b_{n-1},a_n+b_n}\,,
 \quad\text{for each }n<\go\,.
 \]
It follows that $C_n\cap D_n\subseteq A_n+B_n\subseteq C_{n+1}\cap D_{n+1}$, for all $n<\go$.
Hence, denoting by~$\ba_0$, $\ba_1$, $\bb_0$, $\bb_1$ the ideals of~$L$ generated by $\setm{A_n}{n<\go}$, $\setm{B_n}{n<\go}$, $\setm{C_n}{n<\go}$, $\setm{D_n}{n<\go}$, respectively, we obtain that $\ba_0\vee\ba_1=\bb_0\cap\bb_1$ in~$\Id L$.

Suppose that~$\DX$ is \idproj\ with respect to~$\cRm$\,.
Then there are $X_i\in\ba_i$ and $Y_i\in\bb_i$, for $i\in\set{0,1}$, such that $X_0+X_1=Y_0\cap Y_1$, $C_0\subseteq Y_0$, and $D_0\subseteq Y_1$.
There is $m<\go$ such that $X_0\subseteq A_m$ and $X_1\subseteq B_m$.
Setting $Z=X_0+X_1=Y_0\cap Y_1$, it follows that $X_0\subseteq Z\cap A_m$, $X_1\subseteq Z\cap B_m$, $Z+C_0\subseteq Y_0$, and $Z+D_0\subseteq Y_1$, thus
 \begin{equation}\label{Eq:ZABCDn}
 Z=(Z\cap A_m)+(Z\cap B_m)=(Z+C_0)\cap(Z+D_0)\,.
 \end{equation}
We claim that $\set{a_n,b_n}\subseteq Z$, for each $n<\go$.
We argue by induction on~$n$.
Suppose having proved that $\set{a_k,b_k}\subseteq Z$ for each $k<n$.
Since $C_0\subseteq Z+C_0$, it follows that $C_n\subseteq Z+C_0$, thus $a_n+b_n\in Z+C_0$.
A similar proof yields that $a_n+b_n\in Z+D_0$.
By~\eqref{Eq:ZABCDn}, it follows that $a_n+b_n\in(Z\cap A_m)+(Z\cap B_m)$, thus, \emph{a fortiori}, $a_n+b_n\in(Z\cap A_{m'})+(Z\cap B_{m'})$ where we set $m'=\max\set{m,n}$.
Since $a_n\in A_{m'}$\,, $b_n\in B_{m'}$\,, and $A_{m'}\cap B_{m'}=\set{0}$, we get $a_n\in Z\cap A_{m'}$ and $b_n\in Z\cap B_{m'}$, thus completing the induction step.
Now, our claim at stage $n=m+1$ yields that $a_{m+1}\in(Z\cap A_m)+(Z\cap B_m)\subseteq A_m+B_m$, a contradiction.
\end{proof}

\section[A specific example]{Varieties for which~$\DX$ is sharply transferable}\label{S:DMgo}

This section will be focused on the lattice~$\DX$ introduced in Section~\ref{S:Intro} (cf. Figure~\ref{Fig:DandhatD}).
Collapsing the central segment $[\va_0\vee\va_1,\vb_0\wedge\vb_1]$ defines a surjective lattice homomorphism from~$\hDX$ onto~$\DX$.
Since no section of that map is a lattice homomorphism, and since~$\hDX$ is distributive, $\DX$ is not projective with respect to~$\cD$.

Further, by the characterization of sharp transferability given in Gaskill, Gr\"atzer, and Platt \cite[Theorem~4.4]{GaGrPl75}, $\DX$ is not sharply transferable with respect to~$\cL$ (because it fails Whitman's Condition).
In fact, $\DX$ is the smallest lattice failing Whitman's Condition (thus failing transferability with respect to the variety~$\cL$ of all lattices).

On the other hand, it follows from Theorem~\ref{T:IdProjDLat} that~$\DX$ is sharply transferable with respect to~$\cD$.
The present section is devoted to pushing this observation a bit further.

Our first lemma introduces a quasi-identity satisfied by the variety~$\cM_{\go}$\,.

\begin{lemma}\label{L:QuasiIdMgo}
Let $M\in\cM_{\go}$ and let $a_0,a_1,b_0,b_1,a'_0,a'_1,b'_0,b'_1\in M$ satisfy the following conditions:
 \begin{align}
 a_i&\leq a'_i
 &&(\text{for all }i\in\set{0,1})\,;\label{Eq:ai0leqai1}\\
 b'_i&=b_i\vee a_0\vee a_1
 &&(\text{for all }i\in\set{0,1})\,;\label{Eq:bi02bi1}\\
 b_0\wedge b_1&\leq a_0\vee a_1\,;\label{Eq:bWa0}\\
 b'_0\wedge b'_1&\leq a'_0\vee a'_1\,.\label{Eq:bWa1}
 \end{align}
Set $a^*_i=a'_i\wedge b'_0\wedge b'_1$, for each $i\in\set{0,1}$.
Then $a^*_0\vee a^*_1=b'_0\wedge b'_1$.
\end{lemma}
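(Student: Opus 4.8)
The plan is to reduce the identity $a^*_0 \vee a^*_1 = b'_0 \wedge b'_1$, where $a^*_i = a'_i \wedge b'_0 \wedge b'_1$, to an application of the J\'onsson lattice inclusion defining $\cM_{\go}$, namely
\[
\vx\wedge(\vy\vee(\vu\wedge\vv))\wedge(\vu\vee\vv)\leq
\vy\vee(\vx\wedge\vu)\vee(\vx\wedge\vv)\,.
\]
One inequality, $a^*_0 \vee a^*_1 \leq b'_0 \wedge b'_1$, is immediate: each $a^*_i = a'_i \wedge b'_0 \wedge b'_1 \leq b'_0 \wedge b'_1$, so their join is below $b'_0 \wedge b'_1$. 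The content is in the reverse inequality $b'_0 \wedge b'_1 \leq a^*_0 \vee a^*_1$.

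For the reverse inequality, set $c = b'_0 \wedge b'_1$. By \eqref{Eq:bWa1} we have $c \leq a'_0 \vee a'_1$, so $a^*_i = a'_i \wedge c$ and I want $c \leq (a'_0 \wedge c)\vee(a'_1\wedge c)$. Since $c \leq a'_0\vee a'_1$, the element $c$ already lies under a join of $a'_0$ and $a'_1$; so it would suffice to show $c \wedge (a'_0\vee a'_1) \leq (a'_0\wedge c)\vee(a'_1\wedge c)$, i.e. that the pair $(a'_0,a'_1)$ "distributes over" $c$ in the sense of the J\'onsson inclusion. The plan is to instantiate the inclusion with $\vx = c$, $\vu = a'_0$, $\vv = a'_1$, and with $\vy$ chosen so that the left-hand side becomes $c$ and the right-hand side becomes $a^*_0\vee a^*_1$. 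The natural candidate is $\vy = b_0\wedge b_1$ (or possibly $a_0\vee a_1$, or $b_0 \wedge b_1$ together with the hypotheses relating the primed and unprimed elements): by \eqref{Eq:bWa0}, $b_0\wedge b_1 \leq a_0\vee a_1 \leq a'_0 \vee a'_1$ using \eqref{Eq:ai0leqai1}, so the factor $\vy\vee(\vu\wedge\vv)$ on the left absorbs into $\vu\vee\vv$ and the left-hand side collapses to $c \wedge (a'_0\vee a'_1) = c$. On the right, I need $\vy \vee (c\wedge a'_0)\vee(c\wedge a'_1) = a^*_0\vee a^*_1$, which forces $\vy \leq a^*_0\vee a^*_1$; here \eqref{Eq:bi02bi1} (which says $b'_i = b_i \vee a_0\vee a_1$) should be used to show $b_0\wedge b_1 \vee a_0 \vee a_1 \leq b'_0\wedge b'_1 = c$ and then, via \eqref{Eq:bWa1} and a short computation, that $a_0\vee a_1 \leq a^*_0\vee a^*_1$, so that $\vy$ can be absorbed.

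The main obstacle I anticipate is pinning down the exact choice of $\vy$ and verifying that the right-hand side of the J\'onsson inclusion really does reduce to $a^*_0\vee a^*_1$ and not to something strictly larger; this is where the precise interplay of hypotheses \eqref{Eq:ai0leqai1}--\eqref{Eq:bi02bi1}, \eqref{Eq:bWa0}, \eqref{Eq:bWa1} matters, and it may require modularity (which holds since $\cM_{\go}\subseteq\cM$) to juggle the intersections $a'_i\wedge b'_0\wedge b'_1$ against $a_i$ and $b_i$. In particular one likely needs the identities $a'_0 \vee a'_1 \geq b'_0\wedge b'_1$ and $a^*_0 \vee a^*_1 \geq a_0 \vee a_1$, the second obtained by noting $a_i \leq a'_i$ and $a_i \leq a_0\vee a_1 \leq b'_0\wedge b'_1$ (the last step from \eqref{Eq:bi02bi1} since $a_0\vee a_1 \leq b_i\vee a_0\vee a_1 = b'_i$), hence $a_i \leq a'_i\wedge b'_0\wedge b'_1 = a^*_i$. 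Once those auxiliary containments are in hand, the J\'onsson inclusion applied to $(\vx,\vy,\vu,\vv) = (c,\, a_0\vee a_1,\, a'_0,\, a'_1)$ should close the argument after routine simplification.
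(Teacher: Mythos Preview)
Your plan has a genuine gap at the step where you claim the left-hand side of the J\'onsson inclusion collapses to $c\wedge(a'_0\vee a'_1)=c$. With your substitution $(\vx,\vy,\vu,\vv)=(c,\,a_0\vee a_1,\,a'_0,\,a'_1)$, the left-hand side is
\[
c\wedge\bigl((a_0\vee a_1)\vee(a'_0\wedge a'_1)\bigr)\wedge(a'_0\vee a'_1)\,.
\]
You argue that $\vy\vee(\vu\wedge\vv)\leq\vu\vee\vv$ and conclude that the middle factor ``absorbs into'' $\vu\vee\vv$. That is backwards: in a meet, the \emph{smaller} term survives, so what remains is $c\wedge\bigl((a_0\vee a_1)\vee(a'_0\wedge a'_1)\bigr)$, not $c\wedge(a'_0\vee a'_1)$. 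For this to equal~$c$ you would need $c\leq(a_0\vee a_1)\vee(a'_0\wedge a'_1)$, and that inequality need not hold. A concrete counterexample in~$\sM_4$ (atoms $p,q,r,s$): take $a_0=p$, $a_1=0$, $a'_0=p$, $a'_1=q$, $b_0=r$, $b_1=s$. All four hypotheses are satisfied, $b'_0=b'_1=1$, so $c=1$, whereas $(a_0\vee a_1)\vee(a'_0\wedge a'_1)=p\vee 0=p$. The left-hand side of the inclusion is then~$p$, not~$c$, and the inclusion yields only $p\leq a^*_0\vee a^*_1$, which is useless. Your right-hand side analysis is fine (indeed $a_i\leq a^*_i$, so the RHS is exactly $a^*_0\vee a^*_1$), but the left-hand side does not deliver~$c$.

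The paper's proof takes a completely different route: since the desired equation is preserved under subdirect products, it suffices to verify it in subdirectly irreducible members of~$\cM_{\go}$, and by J\'onsson's Lemma those all have length at most~$2$. In a lattice of length~$\leq 2$ the verification is a short case analysis: if $a_0=a_1=0$ then $b'_i=b_i$ and \eqref{Eq:bWa0} already gives $b'_0\wedge b'_1\leq a^*_0\vee a^*_1$; otherwise $a^*_0\vee a^*_1\geq a_0\vee a_1>0$, so if $a^*_0\vee a^*_1<b'_0\wedge b'_1$ then $b'_0\wedge b'_1=1$, forcing $a^*_i=a'_i$ and hence $a^*_0\vee a^*_1\geq b'_0\wedge b'_1$ by~\eqref{Eq:bWa1}, a contradiction. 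If you want to salvage a direct equational argument from the J\'onsson inclusion, you will need either a different substitution or an iterated application combined with modularity; a single application with $\vy=a_0\vee a_1$ does not suffice.
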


\begin{proof}
By Birkhoff's Theorem, every member of~$\cM_{\go}$ is a subdirect product of subdirectly irreducible members of~$\cM_{\go}$, so it suffices to verify our statement in case~$M$ is subdirectly irreducible.
Then it follows from J\'onsson's Lemma (cf. J\'onsson \cite[Corollary~3.2]{Jons67}) that~$M$ is a homomorphic image of a lattice of length at most~$2$, thus~$M$ has length at most~$2$.

Next, we observe the following obvious consequence of~\eqref{Eq:ai0leqai1} and~\eqref{Eq:bi02bi1}:
 \begin{equation}\label{Eq:ai0leai}
 a_i\leq a^*_i\leq a'_i\,,\quad\text{for each }i\in\set{0,1}\,.
 \end{equation}
The inequality $a^*_0\vee a^*_1\leq b'_0\wedge b'_1$ is trivial.
Furthermore, if $a_0=a_1=0$, then each $b'_i=b_i$\,, thus, by~\eqref{Eq:bWa0}, $b'_0\wedge b'_1=b_0\wedge b_1
\leq a_0\vee a_1\leq a^*_0\vee a^*_1$ and we are done.

Now suppose, towards a contradiction, that $a^*_0\vee a^*_1<b'_0\wedge b'_1$.
By the paragraph above, together with~\eqref{Eq:ai0leai}, the element $a^*_0\vee a^*_1$ is nonzero.
Since~$M$ has length at most~$2$, it follows that $b'_0\wedge b'_1=1$, whence each $a^*_i=a'_i$.
Using~\eqref{Eq:bWa1}, we get
 \[
 1=b'_0\wedge b'_1\leq a'_0\vee a'_1
 =a^*_0\vee a^*_1\,,
 \]
a contradiction.
\end{proof}

\begin{lemma}\label{L:ConvRngLat}
Let~$C$ be a convex sublattice of a lattice~$L$ and let~$\gL$ be a chain.
Then the set~$M$, of all antitone $x\in L^{\gL}$ such that $x[\gL]\cap C\neq\es$, is a sublattice of~$L^{\gL}$.
\end{lemma}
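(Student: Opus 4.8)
The plan is to verify directly that $M$ is closed under the pointwise operations of $L^{\gL}$. Recall that $L^{\gL}$ is the lattice of \emph{all} maps $\gL\to L$, with join and meet computed pointwise. Fix $x,y\in M$; I must check that $z:=x\vee y$ and $z':=x\wedge y$ both lie in $M$, i.e., that each is antitone and that each has its range meeting $C$. Antitonicity is the easy half: if $\ga\leq\gb$ in $\gL$ then $x(\gb)\leq x(\ga)$ and $y(\gb)\leq y(\ga)$, whence $z(\gb)=x(\gb)\vee y(\gb)\leq x(\ga)\vee y(\ga)=z(\ga)$ and dually for $z'$; note this step does not use that $\gL$ is a chain.

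For the range condition --- the actual content of the lemma --- I would fix $\ga_0\in\gL$ with $x(\ga_0)\in C$ and $\ga_1\in\gL$ with $y(\ga_1)\in C$, which exist since $x,y\in M$. Here the hypothesis that $\gL$ is a \emph{chain} enters: $\ga_0$ and $\ga_1$ are comparable, and since swapping $x$ with $y$ leaves both $x\vee y$ and $x\wedge y$ unchanged while merely interchanging the roles of $\ga_0$ and $\ga_1$, I may assume $\ga_1\leq\ga_0$. The two key sandwich inequalities are then
\[
x(\ga_0)\ \leq\ z(\ga_0)\ \leq\ x(\ga_0)\vee y(\ga_1)
\qquad\text{and}\qquad
x(\ga_0)\wedge y(\ga_1)\ \leq\ z'(\ga_1)\ \leq\ y(\ga_1)\,,
\]
the left one because $z(\ga_0)=x(\ga_0)\vee y(\ga_0)$ with $y(\ga_0)\leq y(\ga_1)$ by antitonicity of $y$ (as $\ga_1\leq\ga_0$), and the right one because $z'(\ga_1)=x(\ga_1)\wedge y(\ga_1)$ with $x(\ga_1)\geq x(\ga_0)$ by antitonicity of $x$. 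In each display the two outer terms belong to $C$: $x(\ga_0)$ and $y(\ga_1)$ do by choice, while $x(\ga_0)\vee y(\ga_1)$ and $x(\ga_0)\wedge y(\ga_1)$ do because $C$ is a sublattice. Convexity of $C$ then forces $z(\ga_0)\in C$ and $z'(\ga_1)\in C$, so $z,z'\in M$, completing the verification.

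Both facts needed about $C$ thus get used exactly once --- that it is a sublattice (to place $x(\ga_0)\vee y(\ga_1)$ and $x(\ga_0)\wedge y(\ga_1)$ in $C$) and that it is convex (to pull the intermediate values into $C$) --- and the chain hypothesis is used only to make the two witnesses $\ga_0,\ga_1$ comparable. I do not anticipate any genuine obstacle here; the one point requiring care is the bookkeeping of the antitonicity inequalities and checking that the \emph{same} choice of which index is the larger serves simultaneously for the join (evaluated at $\ga_0$) and for the meet (evaluated at $\ga_1$), which it does. Finally, $M\neq\es$ since the constant map at any $c\in C$ lies in $M$, so $M$ is indeed a sublattice.
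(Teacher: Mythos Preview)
Your proof is correct and essentially the same as the paper's: both fix witnesses in~$\gL$ for~$x$ and~$y$, use the chain hypothesis to compare them, evaluate the join at the larger index and the meet at the smaller one, and then apply the sublattice-plus-convexity sandwich. The only cosmetic difference is that the paper makes the opposite WLOG choice (it takes the witness for~$x$ to be the smaller index) and leaves the meet case to ``similarly'', whereas you spell out both sandwiches explicitly.
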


\begin{proof}
Observe first that~$M$ is nonempty (for all constant maps with value in~$C$ belong to~$M$).

Now let $x,y\in M$.
We must prove that $x\vee y$ and~$x\wedge y$ both belong to~$M$.
The maps $x\vee y$ and~$x\wedge y$ are both antitone.
Let $\xi,\eta\in\gL$ and $u,v\in C$ such that $x_{\xi}=u$ and $y_{\eta}=v$.
Since~$\gL$ is a chain, we may assume that $\xi\leq\eta$.
Since $x_{\eta}\leq x_{\xi}=u$, we get $(x\vee y)_{\eta}=x_{\eta}\vee v\in[v,u\vee v]$, thus, as~$C$ is a convex sublattice of~$L$, $(x\vee y)_{\eta}\in C$.
Similarly, $(x\wedge y)_{\xi}\in C$.
\end{proof}

Our next lemma deals with solutions of equations in members of the variety~$\cN_5$\,.

For any nonnegative integer~$m$, we denote by~$\gO_m$ the set of all
tuples $z=(x_0,\dots,x_m,y_0,\dots,y_m,u,v)\in\sN_5^{2m+4}$ such that $x_0\leq\cdots\leq x_m$, $y_0\leq\cdots\leq y_m$, $u\wedge v\leq x_0\vee y_0$, and $(u\vee x_k\vee y_k)\wedge(v\vee x_k\vee y_k)\leq x_{k+1}\vee y_{k+1}$ whenever $0\leq k<m$.
In that context, we set $c(z)=u$, $d(z)=v$, $a_k(z)=x_k$\,, and $b_k(z)=y_k$\,, whenever $0\leq k\leq m$, and we denote by~$\seq{z}$ the sublattice of~$\sN_5$ generated by the entries of~$z$.
We also denote by~$0_z$ (resp., $1_z$) the least element (resp., the largest element) of~$\seq{z}$.
We set $L_m=\prod_{z\in\gO_m}\seq{z}$, and we denote by~$F_m$ the sublattice of~$L_m$ generated by $X_m=\set{a_0,\dots,a_m,b_0,\dots,b_m,c,d}$.
A standard argument of universal algebra shows that~$F_m$ is the lattice defined, within~$\cN_5$, by the generators~$a_0$, \dots, $a_m$, $b_0$, \dots, $b_m$, $c$, $d$ and the relations $a_0\leq\dots\leq a_m$, $b_0\leq\dots\leq b_m$, $c\wedge d\leq a_0\vee b_0$, and $(c\vee a_k\vee b_k)\wedge(d\vee a_k\vee b_k)\leq a_{k+1}\vee b_{k+1}$ whenever $0\leq k<m$.

We define~$\beta(t)$ as the least element $f\in F_m$ such that $t\leq f$, for any $t\in L_m$.
Observe that~$\beta$ is a surjective \jzh\ from~$L_m$ onto~$F_m$.
We also define~$\beta_0(t)$ as the meet, in~$L_m$, of all the elements of the generator set~$X_m$ above~$t$.
Hence, $\beta(t)\leq\beta_0(t)$.
Furthermore, if~$t$ is join-prime in~$L_m$, then $\beta(t)=\beta_0(t)$ (this is well known, see, for example, Freese, Je\v{z}ek, and Nation \cite[Theorem~2.4]{FJN}).

\begin{lemma}\label{L:NoMidFm}
For any nonnegative integer~$m$, there is no $f\in F_m$ satisfying the equation $(f\vee c)\wedge(f\vee d)=(f\wedge a_m)\vee(f\wedge b_m)$.
\end{lemma}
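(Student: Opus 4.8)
The plan is to work in the free lattice $F_m$ defined by the given presentation within $\cN_5$, and to rule out any solution $f$ of the equation $(f\vee c)\wedge(f\vee d)=(f\wedge a_m)\vee(f\wedge b_m)$ by analysing canonical forms. First I would recall the structure of $F_m$: it is a finitely presented lattice in the variety $\cN_5$, hence (by J\'onsson's Lemma, since $\sN_5$ is the only subdirectly irreducible in $\cN_5$ up to the two-element chain) it is a subdirect product of copies of $\sN_5$; concretely $F_m\hookrightarrow L_m=\prod_{z\in\gO_m}\seq{z}$, and an element of $F_m$ is determined by its coordinate values $\varphi_z(f)\in\seq z\subseteq\sN_5$ as $z$ ranges over all valuations $z\in\gO_m$. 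So the statement ``$f$ solves the equation'' is equivalent to: for every $z\in\gO_m$, the $\sN_5$-elements $x:=\varphi_z(f)$, $u=c(z)$, $v=d(z)$, $x_m=a_m(z)$, $y_m=b_m(z)$ satisfy $(x\vee u)\wedge(x\vee v)=(x\wedge x_m)\vee(x\wedge y_m)$.

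The core of the argument is then to build, for each fixed candidate $f\in F_m$, a single valuation $z\in\gO_m$ that refutes the equation. Here is where the choice of the parameter $m$ matters, and why the lemma is stated for every nonnegative integer $m$: one wants $m$ large enough, relative to the ``length'' or canonical-form complexity of $f$, that $f$ cannot track the long chain of relations $(c\vee a_k\vee b_k)\wedge(d\vee a_k\vee b_k)\leq a_{k+1}\vee b_{k+1}$. Concretely, I expect to use the join-cover refinement / canonical form machinery for free lattices in $\cN_5$ (cf. Freese--Je\v zek--Nation), using $\beta$ and $\beta_0$ as introduced just before the lemma: since $F_m$ is finitely presented in a locally finite variety it is finite, and every element has a canonical meet-of-joins-of-generators form; the key invariant is roughly how ``deep'' $f$ sits in the chain $a_0\leq\dots\leq a_m$, $b_0\leq\dots\leq b_m$. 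The heart is a combinatorial pigeonhole: among the $m+1$ indices there must be some level $k$ at which $f$ behaves ``uniformly'', i.e. $f\wedge a_k$ and $f\wedge b_k$ already determine $f\wedge a_m$ and $f\wedge b_m$ in the relevant coordinate, while $f\vee c$ and $f\vee d$ overshoot past $a_{k+1}\vee b_{k+1}$; one then feeds the actual $\sN_5$-valuation that witnesses the pentagon's failure of distributivity at that level to produce a coordinate $z$ where the two sides of the equation differ.

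The step I expect to be the main obstacle is precisely the construction of the refuting valuation $z\in\gO_m$: one must exhibit explicit subspace-like or pentagon-valued assignments to $a_0,\dots,a_m,b_0,\dots,b_m,c,d$ that (i) genuinely lie in $\gO_m$ — so all the chain inequalities and the $m$ covering-type inequalities must be checked — and (ii) make $(f\vee c)\wedge(f\vee d)\ne(f\wedge a_m)\vee(f\wedge b_m)$ for the \emph{given} $f$. The subtlety is that $f$ is an arbitrary element of $F_m$, so one cannot simply name it; instead one argues via its canonical form and the finite list of generators below/above it, and uses that the pentagon $\sN_5$ has an element $x$ with $x\wedge(u\vee v)$ strictly below $(x\wedge u)\vee(x\wedge v)$-type failure. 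I would structure this as: (1) reduce, via the subdirect representation, to producing one bad coordinate; (2) given $f$, read off from its canonical form the largest index $k<m$ such that, in a suitable pentagon valuation, $f$ lies strictly between the ``$a_k\vee b_k$ level'' and the ``$a_{k+1}\vee b_{k+1}$ level''; (3) if no such $k$ exists then $f$ is forced to the top or bottom and a direct pentagon computation (as in the end of Lemma~\ref{L:QuasiIdMgo}, but now inside $\sN_5$ where $\cM_{\go}$ fails) already refutes the equation; (4) otherwise use the pentagon valuation at level $k$ to obtain the contradiction. The clean write-up will lean on Lemma~\ref{L:ConvRngLat} and the $\seq z$-construction to manufacture the needed antitone/convex families realising the required $\sN_5$ pattern across all the levels at once.
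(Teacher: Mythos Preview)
Your reduction to the subdirect representation $F_m\hookrightarrow L_m$ is correct, but the plan that follows has a genuine gap. You propose to read off a canonical form of~$f$, extract a ``depth'' index~$k$, and then cook up a single valuation $z\in\gO_m$ tailored to that~$k$. This is both vague (canonical-form theory for free lattices does not transfer directly to~$\cN_5$, and you never say what the invariant actually is) and unnecessary. It also misreads the quantifiers: the lemma fixes~$m$ and rules out \emph{every} $f\in F_m$; you cannot choose~$m$ large relative to~$f$. The reference to Lemma~\ref{L:ConvRngLat} is a red herring---that lemma concerns antitone maps into~$\sM_{3,3}$ and plays no role here.

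The idea you are missing is that one does not build a valuation depending on~$f$. Instead, one uses a handful of \emph{fixed} valuations to force \emph{global} lower bounds on~$f$ inside~$F_m$, via the map~$\beta$ (and~$\beta_0$) introduced just before the lemma: if a coordinate computation shows $t\leq f(z)$ with~$t$ join-prime in~$\seq{z}$, then $\beta_0(t\cdot z)\leq f$ in~$F_m$. Concretely, one first shows $a_0\vee b_0\leq f$ from a single valuation, and then proves by induction on~$k$ that $(a_k\wedge c)\vee(a_k\wedge d)\vee(b_k\wedge c)\vee(b_k\wedge d)\leq f$, each step using one explicit pentagon valuation and the bound from the previous step. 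A final fixed valuation, sending $c,d$ to incomparable atoms and~$b_m$ to the third atom~$p_1$, forces $f(z)=0$ while the inductive bound gives $p_1\leq f(z)$. The whole argument is three short computations in~$\sN_5$; no pigeonhole, no canonical forms, and no dependence of the valuations on~$f$.
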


\begin{proof}
Let~$f$ satisfy the given equation.
Observe that $f=(f\vee c)\wedge(f\vee d)=(f\wedge a_m)\vee(f\wedge b_m)$.
For each $t\in\sN_5$ and each $I\subseteq[0,m]$, we define~$t\cdot I$ as the finite sequence $(t_0,\dots,t_m)$ defined by
 \[
 t_k=\begin{cases}
 0\,,&\text{if }k\notin I\,,\\
 t\,,&\text{if }k\in I\,,
 \end{cases}
 \qquad\text{for each }k\in[0,m]\,.
 \]
Furthermore, we denote by~$t\cdot z$ the element of~$L_m$ sending~$z$ to~$t$ and every $z'\neq z$ to~$0_{z'}$, whenever $z\in\gO_m$ and $t\in\seq{z}$.

\setcounter{claim}{0}

\begin{claim}\label{Cl:a0jjb0leqf}
$a_0\vee b_0\leq f$.
\end{claim}

\begin{cproof}
The $(2m+4)$-tuple $z=(p_1\cdot[0,m],p_3\cdot[0,m],p_2,p_2)$ belongs to~$\gO_m$, and $f(z)=f(z)\vee p_2=(f(z)\wedge p_1)\vee(f(z)\wedge p_3)$, thus $f(z)=1$, and thus, \emph{a fortiori}, $p_3\leq f(z)$.
The latter inequality means that $p_3\cdot z\leq f$, so, since $f\in F_m$, we get $\beta(p_3\cdot z)\leq f$.
Since~$p_3$ is join-prime in~$\seq{z}$, $p_3\cdot z$ is join-prime in~$L_m$, thus $\beta(p_3\cdot z)=\beta_0(p_3\cdot z)=b_0$.
Therefore, $b_0\leq f$.
Symmetrically, $a_0\leq f$.
\end{cproof}

\begin{claim}\label{Cl:akcleqf}
$(a_k\wedge c)\vee(a_k\wedge d)\vee(b_k\wedge c)\vee(b_k\wedge d)\leq f$,
whenever $0\leq k\leq m$.
\end{claim}

\begin{cproof}
We argue by induction on~$k$.
The case $k=0$ follows from Claim~\ref{Cl:a0jjb0leqf}.

Suppose the statement proved at~$k<m$.
Since $z=(p_3\cdot[k+1,m],p_1\cdot[k,m],p_2,p_3)$ belongs to~$\gO_m$ and $f(z)=(f(z)\vee p_2)\wedge(f(z)\vee p_3)=(f(z)\wedge p_1)\vee(f(z)\wedge p_3)$, we get $f(z)\in\set{0,1,p_3}$.
Moreover, by the induction hypothesis, $b_k\wedge c\leq f$, so we get $f(z)\geq p_1\wedge p_2=p_1$, so the only remaining possibility is $f(z)=1$, and so, \emph{a fortiori}, $p_3\leq f(z)$, that is, $p_3\cdot z\leq f$.
Hence, $a_{k+1}\wedge d=\beta_0(p_3\cdot z)=\beta(p_3\cdot z)\leq f$.
By symmetry, $a_{k+1}\wedge c$, $b_{k+1}\wedge c$, and $b_{k+1}\wedge d$ are also below~$f$.
\end{cproof}

Finally, the element $z=(0\cdot[0,m],p_1\cdot\set{m},p_2,p_3)$ belongs to~$\gO_m$, and
$f(z)=(f(z)\vee p_2)\wedge(f(z)\vee p_3)=f(z)\wedge p_1$, thus $f(z)=0$.
On the other hand, it follows from Claim~\ref{Cl:akcleqf} that $b_m\wedge c\leq f$, thus $p_1\leq f(z)$, a contradiction.
\end{proof}

\begin{theorem}\label{T:IdProjD}
Let~$\cV$ be a variety of lattices.
Then~$\DX$ is sharply transferable with respect to~$\cV$ if{f}~$\cV$ is contained in~$\cM_{\go}$\,.
\end{theorem}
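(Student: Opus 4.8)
I would prove the two implications separately. Assume first that $\cV\subseteq\cM_{\go}$. Since $\DX$ is finite, Theorem~\ref{T:Transf2Proj}\eqref{CIdProj2Transf} reduces the claim to showing that $\DX$ is \idproj\ with respect to $\cM_{\go}$ (\idprojy\ with respect to a class is automatically inherited by subclasses, and $\cV\subseteq\cM_{\go}$). I would verify this directly from the definition. Let $M\in\cM_{\go}$, let $\gf\colon\DX\to\Id M$ be a lattice homomorphism, and let $f_0$ be a choice function for $\gf$; write $\ba_i=\gf(\va_i)$ and $\bb_i=\gf(\vb_i)$, so that $\ba_0\vee\ba_1=\bb_0\cap\bb_1$ and $\ba_i\subseteq\bb_j$. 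Using directedness of ideals: first pick $b_i\in\bb_i$ dominating $f_0(\vb_i)$, the $f_0$-value of the central element $\va_0\vee\va_1$ (which lies in $\bb_0\cap\bb_1$), and large enough that $b_0\vee b_1$ dominates the $f_0$-value of the top; since $b_0\wedge b_1\in\bb_0\cap\bb_1=\ba_0\vee\ba_1$, pick $a_i\in\ba_i$ with $b_0\wedge b_1\le a_0\vee a_1$, enlarged so as to dominate $f_0(\va_i)$ and the $f_0$-value of the bottom; put $b'_i=b_i\vee a_0\vee a_1\in\bb_i$, and, since $b'_0\wedge b'_1\in\bb_0\cap\bb_1=\ba_0\vee\ba_1$, pick $a'_i\in\ba_i$ with $a_i\le a'_i$ and $b'_0\wedge b'_1\le a'_0\vee a'_1$. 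The hypotheses of Lemma~\ref{L:QuasiIdMgo} now hold, so setting $a^*_i=a'_i\wedge b'_0\wedge b'_1$ yields $a^*_0\vee a^*_1=b'_0\wedge b'_1$. Consequently the assignment $\va_i\mapsto a^*_i$, $\vb_i\mapsto b'_i$, $\va_0\vee\va_1\mapsto b'_0\wedge b'_1$, bottom $\mapsto a^*_0\wedge a^*_1$, top $\mapsto b'_0\vee b'_1$ is a lattice homomorphism $\DX\to M$ (a routine check, using $a^*_i\le b'_0\wedge b'_1$ and the identity just obtained), it is a choice function for $\gf$, and it dominates $f_0$ by construction. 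Hence $\DX$ is \idproj\ with respect to $\cM_{\go}$, and so sharply transferable with respect to $\cV$.

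Assume now that $\DX$ is sharply transferable with respect to $\cV$; the plan is to prove $\cV\subseteq\cM_{\go}$ by contraposition. Suppose $\cV\not\subseteq\cM_{\go}$. If $\cV$ is not modular, it contains a copy of the pentagon $\sN_5$ (Dedekind's criterion, cf.~\cite{LTF}), so $\cN_5\subseteq\cV$; if $\cV$ is modular, then since $\cV\not\subseteq\cM_{\go}$, the theorem of J\'onsson~\cite{Jons68} recalled in Section~\ref{S:Basic} gives $\sM_{3,3}\in\cV$, whence $\cM_{3,3}\subseteq\cV$. Because sharp transferability with respect to a class is inherited by its subclasses, it therefore suffices to show that $\DX$ is sharply transferable neither with respect to $\cN_5$ nor with respect to $\cM_{3,3}$.

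For $\cN_5$, the plan is to build, from the finitely presented lattices $F_m$ of Lemma~\ref{L:NoMidFm}, their directed colimit, and the antitone-sequence construction of Lemma~\ref{L:ConvRngLat}, a lattice $L\in\cN_5$ together with a lattice embedding $\gf\colon\DX\hookrightarrow\Id L$ for which $\gf(\va_0)\vee\gf(\va_1)=\gf(\vb_0)\wedge\gf(\vb_1)$ and the ideals $\gf(\va_i)$, $\gf(\vb_i)$ are increasing unions governed by the sequences $(a_k)$, $(b_k)$ and the elements $c$, $d$. If $\DX$ were sharply transferable with respect to $\cN_5$, there would be a choice homomorphism $f\colon\DX\to L$ for $\gf$ satisfying the transfer condition; collecting the finitely many elements $f(\va_i)$, $f(\vb_i)$ and the $f$-image of $\va_0\vee\va_1$ at a single finite stage $F_m$ of the construction, and using the transfer condition to force the relevant joins and meets, one obtains an $f\in F_m$ with $(f\vee c)\wedge(f\vee d)=(f\wedge a_m)\vee(f\wedge b_m)$, contradicting Lemma~\ref{L:NoMidFm}. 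For $\cM_{3,3}$, I would give a separate, more ad hoc argument based directly on the lattice $\sM_{3,3}$, exhibiting a lattice in $\cM_{3,3}$ whose ideal lattice contains a copy of $\DX$ but which admits no transfer-respecting choice homomorphism.

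The ``if'' direction and the reduction of the ``only if'' direction to the two varieties $\cN_5$ and $\cM_{3,3}$ are routine. The main obstacle is the $\cN_5$ case: one must arrange $L$ and $\gf$ so that a transfer-respecting choice homomorphism is genuinely forced to solve the middle-element equation \emph{inside} some $F_m$, not merely inside a homomorphic image of it --- which is exactly the purpose of the convex-sublattice/antitone-sequence device of Lemma~\ref{L:ConvRngLat}; the $\cM_{3,3}$ case then still requires its own, more special construction.
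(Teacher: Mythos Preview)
Your ``if'' direction and the reduction of the ``only if'' direction to the two cases $\cN_5\subseteq\cV$ and $\sM_{3,3}\in\cV$ match the paper exactly.

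There are, however, two problems in your $\cN_5$ argument. First, you have swapped the roles of the auxiliary lemmas: the antitone-sequence device of Lemma~\ref{L:ConvRngLat} is \emph{not} used in the $\cN_5$ case. The lattice the paper works in is simply $F=\varinjlim_m F_m$, the lattice presented within~$\cN_5$ by the generators $a_n,b_n,c,d$ and the obvious relations; passage from~$F$ to some~$F_m$ is the ordinary fact that finitely many elements and relations in a directed colimit already live at some finite stage. Lemma~\ref{L:ConvRngLat} is used only in the $\cM_{3,3}$ case (the ``ad hoc'' one you set aside), to produce the sublattice of $\sM_{3,3}^{\ZZ}$ consisting of antitone maps whose range meets $\{u,v\}$.

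Second, and more seriously, the transfer condition alone does not yield the equation of Lemma~\ref{L:NoMidFm}. From a transfer-respecting choice homomorphism you only get $y_0\in\bb_0$, i.e.\ $y_0\le c\vee a_n\vee b_n$ for some~$n$; you do \emph{not} get $c\le y_0$. But $c\le y_0$ and $d\le y_1$ are exactly what is needed: with $f=x_0\vee x_1=y_0\wedge y_1$ one wants $(c\vee f)\wedge(d\vee f)\le y_0\wedge y_1=f$, and this fails without those lower bounds. The paper therefore does not use sharp transferability directly here; it first converts it (via Theorem~\ref{T:Transf2Proj}, using that~$\cN_5$ is a variety) into \idprojy, which lets one prescribe a choice function~$f_0$ with $f_0(\vb_0)\ge c$ and $f_0(\vb_1)\ge d$, whence $c\le y_0$, $d\le y_1$, and the contradiction with Lemma~\ref{L:NoMidFm} follows. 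Your outline should make this detour through \idprojy\ explicit.
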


\begin{proof}
We first prove that~$\DX$ is sharply transferable with respect to~$\cM_{\go}$\,.
By Theorem~\ref{T:Transf2Proj}, it suffices to prove that~$\DX$ is \idproj\ with respect to~$\cM_{\go}$\,.
Let $M\in\cM_{\go}$, let $\gf\colon\DX\to\nobreak\Id M$ be a lattice homomorphism, and let $f_0\colon\DX\to M$ be a choice function for~$\gf$.
We need to find a choice homomorphism $f\colon\DX\to M$ for~$\gf$ such that $f_0\leq f$.
We may assume that~$f_0$ is a \jh.
The ideals $\ba_i=\gf(\va_i)$ and $\bb_i=\gf(\vb_i)$, for $i\in\set{0,1}$, satisfy $\ba_0\vee\ba_1=\bb_0\cap\bb_1$.
Each element $b_i=f_0(\vb_i)$ belongs to~$\bb_i$\,, thus $b_0\wedge b_1$ belongs to $\bb_0\cap\bb_1=\ba_0\vee\ba_1$, and thus there are $a_i\geq f_0(\va_i)$ in~$\ba_i$\,, for $i\in\set{0,1}$, such that $b_0\wedge b_1\leq a_0\vee a_1$.
The element $b'_i=b_i\vee a_0\vee a_1$ belongs to~$\bb_i$\,, for each $i\in\set{0,1}$, thus
$b'_0\wedge b'_1$ belongs to $\bb_0\cap\bb_1=\ba_0\vee\ba_1$, and thus there are $a'_i\geq a_i$ in~$\ba_i$\,, for $i\in\set{0,1}$, such that $b'_0\wedge b'_1\leq a'_0\vee a'_1$.
Each element $a^*_i=a'_i\wedge b'_0\wedge b'_1$ belongs to~$\ba_i$\,, and it follows from Lemma~\ref{L:QuasiIdMgo} that $a^*_0\vee a^*_1=b'_0\wedge b'_1$.
The assignment $0_{\DX}\mapsto a^*_0\wedge a^*_1$, $\va_i\mapsto a^*_i$\,, $\vb_i\mapsto b'_i$\,, $1_{\DX}\mapsto b'_0\vee b'_1$ defines a choice homomorphism $f\colon\DX\to M$ for~$\gf$ with $f_0\leq f$.
Therefore, $\DX$ is sharply transferable with respect to~$\cM_{\go}$\,.

Conversely, let~$\cV$ be a lattice variety not contained in~$\cM_{\go}$\,.
Suppose that~$\DX$ is sharply transferable with respect to~$\cV$.

Suppose first that all lattices in~$\cV$ are modular.
It follows from J\'onsson~\cite{Jons68} (cf. Section~\ref{S:Basic}) that~$\sM_{3,3}$ belongs to~$\cV$.
Denote by~$\ZZ$ the chain of all integers.
By using the labeling of~$\sM_{3,3}$ introduced in Figure~\ref{Fig:MgoandM33}, together with Lemma~\ref{L:ConvRngLat}, we obtain that the sublattice~$M$ of~$\sM_{3,3}^{\ZZ}$, consisting of all antitone maps $x\colon\ZZ\to\sM_{3,3}$ such that $x[\ZZ]\cap\set{u,v}\neq\es$, belongs to~$\cV$.
For all $i\in\set{0,1}$ and all $n\in\ZZ$, we denote by~$a_{i,n}$ and~$b_{i,n}$ the elements of~$M$ defined, using the labeling of~$\sM_{3,3}$ represented in Figure~\ref{Fig:MgoandM33}, by
 \[
 a_{i,n}(k)=\begin{cases}
 u_i\,,&\text{if }k\leq n-1\,,\\
 u\,,&\text{if }k=n\,,\\
 0\,,&\text{if }k\geq n+1\,,
 \end{cases}\qquad
 b_{i,n}(k)=\begin{cases}
 1\,,&\text{if }k\leq n-2\,,\\
 v\,,&\text{if }k=n-1\,,\\
 v_i\,,&\text{if }k\geq n\,,
 \end{cases}
 \]
for every $k\in\ZZ$.
Observe that the sequences $\vecm{a_{i,n}}{n\in\ZZ}$ and $\vecm{b_{i,n}}{n\in\ZZ}$ are both ascending, for every $i\in\set{0,1}$.
Thus, the lower subsets~$\ba_i$ and~$\bb_i$\,, generated by the respective ranges of those sequences, are ideals of~$M$.
Furthermore, the verification of the following inequalities is straightforward:
 \begin{equation}\label{Eq:Ineqainbjn}
 b_{0,n}\wedge b_{1,n}\leq a_{0,n}\vee a_{1,n}
 \leq b_{0,n+1}\wedge b_{1,n+1}\,,\quad\text{whenever }n\in\ZZ\,. 
 \end{equation}
It follows that $\ba_0\vee\ba_1=\bb_0\cap\bb_1$.
Since~$\ba_0$ and~$\ba_1$ (resp., $\bb_0$ and~$\bb_1$) are incomparable with respect to set inclusion, there exists a unique lattice embedding $\gf\colon\DX\hookrightarrow\Id M$ such that $\gf(\va_i)=\ba_i$ and $\gf(\vb_i)=\bb_i$ for each $i\in\set{0,1}$.

For every $z\in M$, denote by~$z(\infty)$ the constant value of~$z(n)$, for large enough $n\in\ZZ$.

\begin{sclaim}
Let $i\in\set{0,1}$ and let $y\in\bb_i\setminus\bb_{1-i}$.
Then $y(\infty)=v_i$.
\end{sclaim}

\begin{scproof}
{}From $y\in\bb_i$ it follows that $y(\infty)\leq v_i$\,, so $y(\infty)\in\set{0,v_i}$.
Suppose that $y(\infty)=0$.
Since $y\in\bb_i$\,, there exists $m\in\ZZ$ such that $y\leq b_{i,m}$ and $y(k)=0$ whenever $k\geq m$.
It follows that $y\leq b_{1-i,m}$, so $y\in\bb_{1-i}$\,, a contradiction.
\end{scproof}

Since~$\DX$ is sharply transferable with respect to~$\cV$ and since $M\in\cV$, there are $x_i\in\ba_i\setminus\ba_{1-i}$ and $y_i\in\bb_i\setminus\bb_{1-i}$\,, for $i\in\set{0,1}$, such that $x_0\vee x_1=y_0\wedge y_1$.
{}From the Claim above it follows that $y_i(\infty)=v_i$.
Hence,
 \begin{equation}\label{Eq:yigeqvi}
 v_i\leq y_i(n)\quad\text{for every }n\in\ZZ\,.
 \end{equation}
On the other hand, from $x_i\in\ba_i$ it follows that
 \begin{equation}\label{Eq:xilequi}
 x_i(n)\leq u_i\quad\text{for every }n\in\ZZ\,.
 \end{equation}
Since $x_0(n)\vee x_1(n)=y_0(n)\wedge y_1(n)$ within~$\sM_{3,3}$\,, the only possibilities allowed by~\eqref{Eq:yigeqvi} and~\eqref{Eq:xilequi} above are that either $x_0(n)=x_1(n)=0$ and $y_i(n)=v_i$ for every $i\in\set{0,1}$, or $y_0(n)=y_1(n)=1$ and $x_i(n)=u_i$ for every $i\in\set{0,1}$ (inspect Figure~\ref{Fig:MgoandM33}).
In particular, the range of~$x_0$ is contained in $\set{0,u_0}$, in contradiction with $x_0\in M$.

Now suppose that~$\cV$ contains a nonmodular lattice.
It follows that $\cN_5\subseteq\cV$.
Denote by~$F$ the lattice defined, within~$\cN_5$, by the generators~$a_n$\,, $b_n$\,, for $n<\go$, and $c$, $d$ subjected to the relations $c\wedge d\leq a_0\vee b_0$, together with $a_n\leq a_{n+1}$\,, $b_n\leq b_{n+1}$\,, and $(c\vee a_n\vee b_n)\wedge(d\vee a_n\vee b_n)\leq a_{n+1}\vee b_{n+1}$ whenever $n<\go$.
Let~$\ba_0$, $\ba_1$, $\bb_0$, $\bb_1$ be the ideals of~$F$ generated by $\setm{a_n}{n<\go}$, $\setm{b_n}{n<\go}$, $\setm{c\vee a_n\vee b_n}{n<\go}$, $\setm{d\vee a_n\vee b_n}{n<\go}$, respectively.
The relations defining the~$a_n$\,, $b_n$\,, $c$, $d$ ensure that $\ba_0\vee\ba_1=\bb_0\cap\bb_1$.

Since, by our assumption, $\DX$ is sharply transferable with respect to~$\cN_5$, it is also \idproj\ with respect to~$\cN_5$ (cf. Theorem~\ref{T:Transf2Proj}), thus there are $x_i\in\ba_i$ and $y_i\in\bb_i$\,, for $i\in\set{0,1}$, such that $c\leq y_0$, $d\leq y_1$, and
 \begin{equation}\label{Eq:x0x1y0y1}
 x_0\vee x_1=y_0\wedge y_1
 \end{equation}
within~$F$.

Preservation of the canonical generators~$a_i$\,, $b_i$\,, $c$, $d$ defines lattice homomorphisms from~$F_k$ to~$F_l$ and from~$F_k$ to~$F$, whenever $k\leq l<\go$, and those homomorphisms form a direct system of lattices and lattice homomorphisms.
Since the direct limit (directed colimit in categorical language) $\varinjlim_{k<\go}F_k$ satisfies the universal property defining~$F$, it follows that $F=\varinjlim_{k<\go}F_k$.
Hence, there exists $m<\go$ such that~\eqref{Eq:x0x1y0y1} holds within~$F_m$ (we identify the~$x_i$ and~$y_i$ with lattice terms, with parameters from the~$a_j$\,, $b_j$\,, $c$, $d$, representing them).
Since each $x_i\in\ba_i$\,, we may, in addition, choose~$m$ in such a way that $x_0\leq a_m$ and $x_1\leq b_m$.
Set $f=x_0\vee x_1=y_0\wedge y_1$ (within~$F_m$).
{}From the inequalities $x_0\leq a_m\wedge f$, $x_1\leq b_m\wedge f$, $c\vee f\leq y_0$, and $d\vee f\leq y_1$, it follows that the equation
 \[
 (a_m\wedge f)\vee(b_m\wedge f)=(c\vee f)\wedge(d\vee f)
 \]
holds within~$F_m$, in contradiction with Lemma~\ref{L:NoMidFm}.
\end{proof}

A subalgebra~$A$ of a universal algebra~$B$ is \emph{pure} in~$B$, if whenever a finite equation system, with parameters from~$A$, has a solution in~$B$, it also has a solution in~$A$ (cf. Banaschewski and Nelson~\cite{BanNels72}).
An embedding $f\colon A\hookrightarrow B$ of universal algebras is pure, it~$f[A]$ is a pure subalgebra of~$B$.
Nelson observes in~\cite{Nels74} that as a consequence of Theorem~\ref{T:IdProjDLat}, the canonical embedding, of any distributive lattice into its ideal lattice, is pure.
She also finds an example (constructed as the dual lattice of an example from Wille~\cite{Wille74}) of a lattice of which the canonical embedding into its ideal lattice is not pure, and she asks whether this can be done for modular lattices.
The following result answers that question in the negative.

\begin{theorem}\label{T:Notpure}
There exists a lattice $M\in\cM_{3,3}$ such that the canonical embedding from~$M$ into~$\Id M$ is not pure.
\end{theorem}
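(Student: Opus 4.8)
The plan is to locate the failure of purity inside the failure of sharp transferability of~$\DX$ with respect to~$\cM_{3,3}$ proved in Theorem~\ref{T:IdProjD}. If the canonical embedding $M\hookrightarrow\Id M$ were pure, then every finite equation system with parameters from~$M$ that is solvable in~$\Id M$ would already be solvable in~$M$; so it suffices to exhibit one witnessing such a system. I would take~$M$ to be (a variant of) the modular lattice from the second half of the proof of Theorem~\ref{T:IdProjD}: a sublattice of a power~$\sM_{3,3}^{\ZZ}$, made of antitone sequences obeying a side condition of the type handled by Lemma~\ref{L:ConvRngLat}, so that $M\in\cM_{3,3}$ for free. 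One then has ideals $\ba_0,\ba_1,\bb_0,\bb_1$ of~$M$ with $\ba_0\vee\ba_1=\bb_0\cap\bb_1$ in~$\Id M$, which realize in~$\Id M$ the central relation $\va_0\vee\va_1=\vb_0\wedge\vb_1$ of~$\DX$ that, as shown there, admits no realization by an embedding of~$\DX$ into~$M$ itself.

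The system~$\Sigma$ would be in four variables $x_0,x_1,y_0,y_1$ (the prospective images of $\va_0,\va_1,\vb_0,\vb_1$): it would consist of the single equation $x_0\vee x_1=y_0\wedge y_1$ together with finitely many inclusions $s(\bar x)\leq\dnw a$ and $\dnw a\leq t(\bar x)$ whose parameters are \emph{principal} ideals $\dnw a$ with $a\in M$, chosen to pin down the~$x_i$ and the~$y_i$ coordinatewise in the manner that the conditions $x_i\in\ba_i$ and $y_i\notin\bb_{1-i}$ did in Theorem~\ref{T:IdProjD}. Solvability of~$\Sigma$ in~$\Id M$ would be witnessed by $x_i\mapsto\ba_i$, $y_i\mapsto\bb_i$, after checking that those parameters are compatible with this assignment. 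For the non-solvability of~$\Sigma$ in~$M$: a solution $x_i=a_i$, $y_i=b_i$ with $a_i,b_i\in M$, read off at each coordinate $n\in\ZZ$, would solve the corresponding local system in~$\sM_{3,3}$, which by the same inspection of~$\sM_{3,3}$ as in Theorem~\ref{T:IdProjD} forces some~$a_i$ to take all of its values in a two-element subset of~$\sM_{3,3}$ disjoint from the set singled out by the side condition defining~$M$ --- contradicting $a_i\in M$.

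The main obstacle is that the two facts doing the work in Theorem~\ref{T:IdProjD}, namely that~$x_i$ lies in the non-principal ideal~$\ba_i$ and that~$y_i$ avoids~$\bb_{1-i}$, are not lattice equations, and --- worse --- the ideals $\ba_i$, $\bb_i$, and $\ba_0\vee\ba_1$ typically have no upper bound inside~$M$, so that no principal-ideal parameter can bound any of the variables \emph{from above}. Hence a system built only from lower bounds is solved trivially in~$M$ by sending every variable to a single sufficiently large element, and proves nothing. Getting around this is the crux: I expect it to require either slightly enlarging~$M$ by a finite set of extra elements (keeping it a sublattice of a power of~$\sM_{3,3}$, hence still in~$\cM_{3,3}$) that supply the missing upper bounds, at the cost of one more equation in~$\Sigma$ to kill the degenerate solutions the enlargement introduces; or else exploiting the equation $x_0\vee x_1=y_0\wedge y_1$ itself, together with a parameter witnessing a single ``bad'' coordinate, to propagate the offending coordinatewise constraint to \emph{all} coordinates. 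Producing one finite~$\Sigma$ with parameters genuinely in~$M$ that is solvable in~$\Id M$ but forbids every solution in~$M$ is the delicate point; the remainder is bookkeeping with the coordinatewise structure of~$\sM_{3,3}$ already prepared for Theorem~\ref{T:IdProjD}. This also matches the known pattern: in Theorem~\ref{T:IdProjDLat} sharp transferability of the finite distributive lattices with respect to~$\cD$ forces purity of the canonical embedding of any distributive lattice into its ideal lattice, whereas here the \emph{failure} of sharp transferability of the finite distributive lattice~$\DX$ with respect to~$\cM_{3,3}$ is precisely what is being exploited.
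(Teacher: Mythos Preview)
Your overall plan is right and matches the paper's: take the same $M\subseteq\sM_{3,3}^{\ZZ}$ from the proof of Theorem~\ref{T:IdProjD}, and produce a finite system with parameters in~$M$ that is solved in~$\Id M$ by $(\ba_0,\ba_1,\bb_0,\bb_1)$ but unsolvable in~$M$. You also correctly isolate the obstacle: the conditions $x_i\in\ba_i$ and $y_i\notin\bb_{1-i}$ are not equations, and with only lower-bound parameters the system collapses to a trivial large solution.

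However, neither of your two proposed fixes is what works, and the gap you flag is genuine. The paper does \emph{not} enlarge~$M$, nor does it use a parameter tied to one ``bad'' coordinate to propagate a constraint. The missing device is this: the constant maps $\ol{u},\ol{v}\colon\ZZ\to\sM_{3,3}$ (with values the two central elements~$u,v$ of~$\sM_{3,3}$) already lie in~$M$, and one adjoins to $x_0\vee x_1=y_0\wedge y_1$ the further equations
\[
x_0\vee\ol{v}=x_1\vee\ol{v}=y_0\vee y_1\qquad\text{and}\qquad
y_0\wedge\ol{u}=y_1\wedge\ol{u}=x_0\wedge x_1\,,
\]
together with the lower bounds $a_{i,0}\leq x_i$ and $b_{i,1}\leq y_i$. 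In~$\Id M$ all of this is satisfied by $x_i=\ba_i$, $y_i=\bb_i$ (a direct check). Coordinatewise in~$\sM_{3,3}$, the enlarged system (without the lower bounds) has exactly eight solutions; the lower-bound constraints then pin down~$z(n)$ to $(u_0,u_1,1,1)$ for $n<0$ and to $(0,0,v_0,v_1)$ for $n>0$, forcing~$x_0$ to range in~$\set{0,u_0}$ and hence to miss~$\set{u,v}$, contradicting $x_0\in M$. The point is that the extra equations with global parameters~$\ol{u},\ol{v}$ supply the rigidity that a ``single bad coordinate'' parameter would not: they cut the local solution set in~$\sM_{3,3}$ down to a short finite list \emph{before} the lower bounds are applied. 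Without an ingredient playing this role your system remains too loose, exactly as you feared.
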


Of course, since~$\sM_{3,3}$ is modular, so is~$M$.

\begin{proof}
As in the second part of the proof of Theorem~\ref{T:IdProjD}, $M$ is the sublattice of~$\sM_{3,3}^{\ZZ}$ consisting of all antitone maps $x\colon\ZZ\to\sM_{3,3}$ such that $x[\ZZ]\cap\set{u,v}\neq\es$.
The elements~$a_{i,n}$ and~$b_{i,n}$ of~$M$, and the ideals~$\ba_i$ and~$\bb_i$\,, for $i\in\set{0,1}$ and $n\in\ZZ$, are defined as in the proof of Theorem~\ref{T:IdProjD}.
The constant map $\ol{x}\colon\ZZ\to\set{x}$ belongs to~$M$ if{f} $x\in\set{u,v}$, for every $x\in\sM_{3,3}$\,.
Let~$\dnw y$ be shorthand for $M\dnw y$, whenever $y\in M$.

\setcounter{claim}{0}

The proofs of our next two claims are straightforward computations.

\begin{claim}\label{Cl:IdealSys}
The following inequalities hold in $\Id M$:
 \begin{gather*}
 \dnw a_{i,0}\leq\ba_i\text{ and }\dnw b_{i,1}\leq\bb_i\,,
 \quad\text{for all }i\in\set{0,1}\,;\\
 \ba_0\vee\ba_1=\bb_0\wedge\bb_1\,;\\
 \ba_0\vee\dnw\ol{v}=\ba_1\vee\dnw\ol{v}=\bb_0\vee\bb_1\,;\\
 \bb_0\wedge\dnw\ol{u}=\bb_1\wedge\dnw\ol{u}=\ba_0\wedge\ba_1\,.
 \end{gather*}
\end{claim}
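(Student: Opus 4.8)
The plan is to reduce every line of the Claim to pointwise statements about the entries of the relevant maps, which are then read off Figure~\ref{Fig:MgoandM33}. First recall the arithmetic of~$\Id M$ that will be used: since~$M$ is a lattice, meets in~$\Id M$ are intersections, the join of two ideals is $\cI\vee\cJ=\setm{z\in M}{z\leq x\vee y\text{ for some }x\in\cI\text{ and }y\in\cJ}$, and an ideal contains the join of any two of its elements. Since the sequences $\vecm{a_{i,n}}{n\in\ZZ}$ and $\vecm{b_{i,n}}{n\in\ZZ}$ are ascending, $\ba_i=\bigcup_{n}{\dnw a_{i,n}}$ and $\bb_i=\bigcup_{n}{\dnw b_{i,n}}$; consequently $\ba_0\vee\dnw\ol v$ is the down-set, within~$M$, of the family $\vecm{a_{0,n}\vee\ol v}{n\in\ZZ}$, the ideal $\bb_0\vee\bb_1$ is the down-set of $\vecm{b_{0,n}\vee b_{1,n}}{n\in\ZZ}$, and similarly for the other compound ideals in the statement. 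So each equality to be proved amounts to comparing two explicitly given cofinal families of maps $\ZZ\to\sM_{3,3}$, and since each such map is constant on each of the finitely many obvious blocks of~$\ZZ$, comparing two of them is a finite check in~$\sM_{3,3}$.

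With this in hand the first displayed line is immediate: $a_{i,0}$ is one of the generators of~$\ba_i$, and $b_{i,1}$ is one of the generators of~$\bb_i$. The equality $\ba_0\vee\ba_1=\bb_0\wedge\bb_1$ is exactly what was recorded in~\eqref{Eq:Ineqainbjn} in the proof of Theorem~\ref{T:IdProjD}: the sandwich $b_{0,n}\wedge b_{1,n}\leq a_{0,n}\vee a_{1,n}\leq b_{0,n+1}\wedge b_{1,n+1}$ yields both inclusions, once a given~$z$ is moved to a common index by ascendingness, and the sandwich itself is a pointwise check that comes down to $v_0\wedge v_1=0$, $u_0\vee u_1=1$, and $u\leq v$ in~$\sM_{3,3}$.

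For the last two lines I would simply compute the generators pointwise. Using $u\leq v$, $u_0\vee v=u_1\vee v=1$, and $v_0\vee v_1=v$, one finds the term-by-term identities $a_{0,n}\vee\ol v=a_{1,n}\vee\ol v=b_{0,n+1}\vee b_{1,n+1}$ in~$M$, each side being the map whose value is~$1$ for $k\leq n-1$ and~$v$ for $k\geq n$; hence $\ba_0\vee\dnw\ol v$, $\ba_1\vee\dnw\ol v$, and $\bb_0\vee\bb_1$ are the $M$-down-set of one and the same family of maps, and the third line follows. Dually, using $u\leq v$, $v_0\wedge u=v_1\wedge u=0$, and $u_0\wedge u_1=u$, one gets $b_{0,n}\wedge\ol u=b_{1,n}\wedge\ol u=a_{0,n-1}\wedge a_{1,n-1}$, each side having value~$u$ for $k\leq n-1$ and~$0$ for $k\geq n$, whence the fourth line. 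Every identity in~$\sM_{3,3}$ invoked above is visible on Figure~\ref{Fig:MgoandM33}.

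As the Claim itself advertises, there is no real obstacle; the proof is routine computation. The two points that deserve a little care are (i) working with the ideal-theoretic join in~$\Id M$ (which, in the case at hand, does coincide with the $M$-down-set of the cofinal family of pointwise joins of generators, precisely because those joins are cofinal among all $x\vee y$ with $x$ in one ideal and~$y$ in the other), and (ii) keeping the constant $\pm1$ index offsets straight when translating ``$\cI\subseteq\cJ$'' into ``each listed generator of~$\cI$ lies below some listed generator of~$\cJ$''. If one of the invoked $\sM_{3,3}$-identities turned out not to hold for the labelling of Figure~\ref{Fig:MgoandM33}, that would merely indicate a mislabelling on my part, and one would re-run the same pointwise computation with the correct labels.
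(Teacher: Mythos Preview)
Your proposal is correct and is exactly the ``straightforward computation'' the paper invokes without giving details; the paper offers no proof of this claim beyond that phrase, so there is nothing to compare. Your pointwise identities $a_{i,n}\vee\ol{v}=b_{0,n+1}\vee b_{1,n+1}$ and $b_{i,n}\wedge\ol{u}=a_{0,n-1}\wedge a_{1,n-1}$ check out against the labeling of~$\sM_{3,3}$ (two copies of~$\sM_3$ glued along the edge $u<v$, with $v_0,v_1,u$ the atoms of the lower diamond and $u_0,u_1,v$ the coatoms of the upper one), and your reduction of the ideal equalities to cofinality of the generator families is sound.
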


\begin{claim}\label{Cl:ResSys}
The only quadruples $(x_0,x_1,y_0,y_1)\in\sM_{3,3}^4$ such that
 \begin{gather*}
 x_0\vee x_1=y_0\wedge y_1\,;\\
 x_0\vee v=x_1\vee v=y_0\vee y_1\,;\\
 y_0\wedge u=y_1\wedge u=x_0\wedge x_1 
 \end{gather*}
are $(u_0,u_1,1,1)$, $(u_1,u_0,1,1)$, $(0,0,v_0,v_1)$, $(0,0,v_1,v_0)$, $(u,u,u,v)$, $(u,u,v,u)$,\linebreak $(u,v,v,v)$, $(v,u,v,v)$.
\end{claim}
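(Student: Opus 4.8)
The statement is a finite classification, and the plan is to turn the two ``outer'' equations of the system into localization constraints that pin each variable into a small interval of $\sM_{3,3}$, after which the ``middle'' equation is checked by hand. Denote by \textup{(E1)}, \textup{(E2)}, \textup{(E3)} the three equations of the statement, read from top to bottom, and recall from the labeling of~$\sM_{3,3}$ in Figure~\ref{Fig:MgoandM33} the features I shall use: $\sM_{3,3}$ is the gluing of two copies of~$\sM_3$ along the prime interval $[u,v]$, so that the principal ideal below~$v$ is the lower copy, $[0,v]=\set{0,v_0,v_1,u,v}\cong\sM_3$, and the principal filter above~$u$ is the upper copy, $[u,1]=\set{u,u_0,u_1,v,1}\cong\sM_3$; in particular $u$ is an atom and $v$ a coatom, the only elements above~$v$ are $v$ and~$1$, the only elements below~$u$ are $0$ and~$u$, and $[u,v]=\set{u,v}$.

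First I would record the symmetries of the system, in order to cut down the number of cases. Transposing $x_0\leftrightarrow x_1$, and independently transposing $y_0\leftrightarrow y_1$, leaves each of \textup{(E1)}, \textup{(E2)}, \textup{(E3)} unchanged. Moreover, the operation~$\delta$ that composes the order-duality of~$\sM_{3,3}$ (which interchanges $u\leftrightarrow v$, $u_i\leftrightarrow v_i$, $0\leftrightarrow1$; write $z\mapsto z^\partial$) with the substitution $(x_0,x_1,y_0,y_1)\mapsto(y_0^\partial,y_1^\partial,x_0^\partial,x_1^\partial)$ fixes~\textup{(E1)} and interchanges \textup{(E2)} and~\textup{(E3)}; hence~$\delta$ carries solutions to solutions. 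These maps permute the eight listed quadruples among themselves, so it suffices to find the solutions up to the group they generate.

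The core is the localization step. Put $t=y_0\vee y_1$ and $m=x_0\wedge x_1$. Equation~\textup{(E2)} reads $t=x_0\vee v=x_1\vee v$, so $t\geq v$ and therefore $t\in\set{v,1}$; dually~\textup{(E3)} gives $m=y_0\wedge u=y_1\wedge u\leq u$, so $m\in\set{0,u}$. This splits the problem into four cases, and within each the outer equations confine the variables: $t=v$ forces $x_0,x_1\leq v$ (so $x_i\in[0,v]$), while $t=1$ forces $x_0,x_1\in\set{u_0,u_1,1}$ (the three elements not below~$v$); dually $m=0$ forces $y_0,y_1\in\set{0,v_0,v_1}$ (the three elements not above~$u$), while $m=u$ forces $u\leq y_0,y_1$ (so $y_i\in[u,1]$). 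The case $(t,m)=(1,0)$ is empty at once, since then each $x_i\geq u$ yields $m=x_0\wedge x_1\geq u>0$. By the duality~$\delta$ (which sends the case $(t,m)=(v,0)$ to $(t,m)=(1,u)$ and stabilizes $(t,m)=(v,u)$), it remains to treat two cases. When $(t,m)=(v,0)$: from $y_0,y_1\in\set{0,v_0,v_1}$ with $y_0\vee y_1=v$ one gets $\set{y_0,y_1}=\set{v_0,v_1}$, hence $y_0\wedge y_1=0$, so \textup{(E1)} forces $x_0\vee x_1=0$ and thus $x_0=x_1=0$; this yields $(0,0,v_0,v_1)$ and $(0,0,v_1,v_0)$, whence, under~$\delta$, also $(u_0,u_1,1,1)$ and $(u_1,u_0,1,1)$. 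When $(t,m)=(v,u)$: here the two confinements give $x_0,x_1\in[u,v]=\set{u,v}$ with $x_0\wedge x_1=u$, and $y_0,y_1\in[u,v]=\set{u,v}$ with $y_0\vee y_1=v$; testing the few remaining pairs against \textup{(E1)} $x_0\vee x_1=y_0\wedge y_1$ selects exactly $(u,u,u,v)$, $(u,u,v,u)$, $(u,v,v,v)$, and $(v,u,v,v)$.

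Every step above reduces to a handful of joins and meets computed inside the two $\sM_3$ blocks, so no genuine obstacle arises; this matches the paper's description of the claim as a straightforward computation. The only point demanding care is \emph{completeness}: one must be sure that the localization in the middle step really captures every quadruple satisfying the system, and that no case contributes a spurious solution (as the check for $(t,m)=(1,0)$ illustrates). Collecting the contributions of the four cases then gives precisely the eight quadruples of the statement.
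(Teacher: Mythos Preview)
Your argument is correct. The paper itself gives no proof beyond the remark that Claims~\ref{Cl:IdealSys} and~\ref{Cl:ResSys} are ``straightforward computations,'' so your write-up is simply a careful execution of that computation; your use of the symmetries $x_0\leftrightarrow x_1$, $y_0\leftrightarrow y_1$, and the duality~$\delta$ to cut the case analysis down to the two parameters $(t,m)=(y_0\vee y_1,\,x_0\wedge x_1)\in\set{v,1}\times\set{0,u}$ is an efficient way to organize it.
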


Now suppose that there is a quadruple $z=(x_0,x_1,y_0,y_1)\in M^4$ that satisfies the system of inequalities given in Claim~\ref{Cl:IdealSys}; that is,
 \begin{gather}
 a_{i,0}\leq x_i\text{ and }b_{i,1}\leq y_i\,,\quad\text{for all }i\in\set{0,1}\,;
 \label{Eq:ai0bi1xiyi}\\
 x_0\vee x_1=y_0\wedge y_1\,;\label{Eq:x01y01}\\
 x_0\vee\ol{v}=x_1\vee\ol{v}=y_0\vee y_1\,;\label{Eq:x01vy01}\\
 y_0\wedge\ol{u}=y_1\wedge\ol{u}=x_0\wedge x_1\,.\label{Eq:y01ux01}
 \end{gather}

\begin{claim}\label{Cl:z(<0)}
$z(n)=(u_0,u_1,1,1)$, for every $n<0$.
\end{claim}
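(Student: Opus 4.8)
The plan is to evaluate the whole system \eqref{Eq:ai0bi1xiyi}--\eqref{Eq:y01ux01} coordinatewise at a fixed negative index and then read off the answer from the finite list supplied by Claim~\ref{Cl:ResSys}. So fix an integer $n<0$. Since $M$ is a sublattice of $\sM_{3,3}^{\ZZ}$, with meets and joins computed coordinatewise, and since the constant maps $\ol u$ and $\ol v$ take the values $u$ and $v$ at~$n$, specializing the equalities \eqref{Eq:x01y01}, \eqref{Eq:x01vy01}, \eqref{Eq:y01ux01} at the coordinate~$n$ shows that the quadruple $(x_0(n),x_1(n),y_0(n),y_1(n))\in\sM_{3,3}^4$ satisfies precisely the three-equation system in the hypothesis of Claim~\ref{Cl:ResSys}. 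Hence it is one of the eight quadruples enumerated there.

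Next I would extract the extra information carried by the bounds \eqref{Eq:ai0bi1xiyi}. Reading off the definitions of $a_{i,0}$ and $b_{i,1}$ from the proof of Theorem~\ref{T:IdProjD}, one has $a_{i,0}(n)=u_i$ and $b_{i,1}(n)=1$ for every $n\leq-1$; this is the one place where the specific indices $0$ and~$1$, together with the assumption $n<0$, are used. Consequently $u_i\leq x_i(n)$ and $y_i(n)=1$ for each $i\in\set{0,1}$.

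Finally I would match this against the list in Claim~\ref{Cl:ResSys}: the only quadruples $(x_0,x_1,y_0,y_1)$ on that list with $y_0=y_1=1$ are $(u_0,u_1,1,1)$ and $(u_1,u_0,1,1)$, and the second is excluded by $u_0\leq x_0$, since $u_0\nleq u_1$ in~$\sM_{3,3}$ (inspect Figure~\ref{Fig:MgoandM33}; alternatively, the $(u_0,u_1,1,1)$ line of Claim~\ref{Cl:ResSys} forces $u_0\wedge u_1=u<u_0$). Therefore $z(n)=(u_0,u_1,1,1)$, as claimed. I do not expect any genuine obstacle here: once Claims~\ref{Cl:IdealSys} and~\ref{Cl:ResSys} are available, this is a short bookkeeping argument, and the only point needing a moment's care is confirming the coordinatewise values of $a_{i,0}$ and $b_{i,1}$ for negative~$n$.
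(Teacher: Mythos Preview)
Your argument is correct and follows essentially the same route as the paper: specialize the system at coordinate~$n$, invoke Claim~\ref{Cl:ResSys}, and use the bounds~\eqref{Eq:ai0bi1xiyi} to single out $(u_0,u_1,1,1)$. The paper is slightly more economical in that it only uses $u_i=a_{i,0}(n)\leq x_i(n)$---this pair of inequalities alone already rules out the other seven quadruples on the list---whereas you also read off $y_i(n)=1$ from the $b_{i,1}$ bound; but that extra observation is harmless and the conclusion is the same.
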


\begin{cproof}
For each $i\in\set{0,1}$, it follows from~\eqref{Eq:ai0bi1xiyi} that $u_i=a_{i,0}(n)\leq x_i(n)$.
Since the quadruple $z(n)=(x_0(n),x_1(n),y_0(n),y_1(n))$ satisfies the equation system given in Claim~\ref{Cl:ResSys}, this leaves the only possibility $z(n)=(u_0,u_1,1,1)$.
\end{cproof}

\begin{claim}\label{Cl:z(>0)}
$z(n)=(0,0,v_0,v_1)$, for every $n>0$.
\end{claim}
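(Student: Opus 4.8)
The plan is to begin as in the proof of Claim~\ref{Cl:z(<0)}: because the operations of $M$ are computed coordinatewise in $\sM_{3,3}^{\ZZ}$ and $\ol u,\ol v$ are constant, for each $n$ the quadruple $z(n)=(x_0(n),x_1(n),y_0(n),y_1(n))$ satisfies the three-equation system of Claim~\ref{Cl:ResSys}, hence is one of the eight quadruples listed there. It then suffices to narrow $z(n)$, for $n>0$, down to $(0,0,v_0,v_1)$ using the extra information available at those indices.

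I would extract two bounds, valid for every $n\geq1$. From~\eqref{Eq:ai0bi1xiyi} and $b_{i,1}(n)=v_i$ we get $v_i\leq y_i(n)$; and from Claim~\ref{Cl:z(<0)} (which gives $x_i(-1)=u_i$) together with antitonicity of $x_i$ we get $x_i(n)\leq u_i$. A routine case check over the eight quadruples of Claim~\ref{Cl:ResSys}, using a handful of incomparabilities read off Figure~\ref{Fig:MgoandM33} (e.g.\ $u_0\parallel u_1$, $v_0\parallel v_1$, $v\nleq u_0$, $v\nleq u_1$, $v_0\nleq u$, $v_1\nleq u$), then leaves exactly the two possibilities $z(n)\in\set{(u_0,u_1,1,1),(0,0,v_0,v_1)}$; in particular $x_0(n)\in\set{0,u_0}$ for every $n>0$.

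The one \emph{genuinely new} point --- and the place where the argument departs from the pattern of Claim~\ref{Cl:z(<0)} --- is that the coordinatewise equations cannot by themselves exclude the value $(u_0,u_1,1,1)$, since it is a bona fide solution of that system; the defining membership condition of $M$ must be invoked. So I would argue by contradiction: if $z(n_0)=(u_0,u_1,1,1)$ for some $n_0>0$, then $x_0(n_0)=u_0$, and antitonicity of $x_0$ (squeezing between $x_0(-1)=u_0$ and $x_0(n_0)=u_0$) forces $x_0(n)=u_0$ for all $n\leq n_0$; combined with $x_0(n)\in\set{0,u_0}$ for $n>0$, this yields $x_0[\ZZ]\subseteq\set{0,u_0}$, a set disjoint from $\set{u,v}$, contradicting $x_0\in M$. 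Hence $z(n)=(0,0,v_0,v_1)$ for every $n>0$. I expect the eight-way case check and --- \emph{more importantly} --- spotting that the finishing blow must come from the constraint $x_0[\ZZ]\cap\set{u,v}\neq\es$, rather than from any further lattice identity, to be the only non-mechanical parts; everything else follows from Claim~\ref{Cl:z(<0)} and monotonicity.
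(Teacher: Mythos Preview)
Your proposal is correct and follows essentially the same route as the paper's proof: narrow $z(n)$ down to the two candidates $(u_0,u_1,1,1)$ and $(0,0,v_0,v_1)$ using $v_i\leq y_i(n)$ together with the antitonicity bound coming from Claim~\ref{Cl:z(<0)}, and then eliminate $(u_0,u_1,1,1)$ by showing it would force $x_0[\ZZ]\cap\set{u,v}=\es$, contradicting $x_0\in M$. The only cosmetic difference is that the paper applies the two filters sequentially (first $v_i\leq y_i(n)$ to reach five candidates, then $z(n)\leq(u_0,u_1,1,1)$ to reach two), whereas you apply both at once; and your contradiction step is in fact slightly cleaner, since you conclude only $x_0[\ZZ]\subseteq\set{0,u_0}$ rather than the paper's stronger-looking ``$z(k)=(u_0,u_1,1,1)$ for all~$k$'' (which is not fully justified for $k>n$ in the paper, though the weaker conclusion you reach is all that is needed).
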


\begin{cproof}
For each $i\in\set{0,1}$, it follows from~\eqref{Eq:ai0bi1xiyi} that $v_i=b_{i,1}(n)\leq y_i(n)$.
Since the quadruple $z(n)=(x_0(n),x_1(n),y_0(n),y_1(n))$ satisfies the equation system given in Claim~\ref{Cl:ResSys}, it can only take one of the values
 \[
 (u_0,u_1,1,1)\,,\ (u_1,u_0,1,1)\,,\ (0,0,v_0,v_1)\,,\ (u,v,v,v)\,,\, (v,u,v,v)\,.
 \]
Moreover, since each~$x_i$ and~$y_i$ is antitone and by Claim~\ref{Cl:z(<0)}, we get $z(n)\leq(u_0,u_1,1,1)$.
Hence either $z(n)=(u_0,u_1,1,1)$ or $z(n)=(0,0,v_0,v_1)$.
In the first case, it follows from the isotonicity of~$z$ that $z(-1)\leq z(0)\leq z(1)$, thus $z(0)=(u_0,u_1,1,1)$, and thus $z(k)=(u_0,u_1,1,1)$ for all~$k$.
In particular, $x_0=\ol{u_0}$ does not belong to~$M$, a contradiction.
\end{cproof}

Since $z(-1)\leq z(0)\leq z(1)$ and by Claims~\ref{Cl:z(<0)} and~\ref{Cl:z(>0)}, we obtain the inequalities
 \[
 (0,0,v_0,v_1)\leq z(0)\leq(u_0,u_1,1,1)\,.
 \]
By Claim~\ref{Cl:ResSys}, it follows that~ $z(0)$ takes one of the values
 \[
 (u_0,u_1,1,1)\,,\ (0,0,v_0,v_1)\,.
 \]
By Claims~\ref{Cl:z(<0)} and~\ref{Cl:z(>0)}, it follows that $x_0(n)\in\set{0,u_0}$ for every $n\in\ZZ$, in contradiction with $x_0\in M$.

We have thus proved that the system of inequalities \eqref{Eq:ai0bi1xiyi}--\eqref{Eq:y01ux01}, with unknowns~$x_0$, $x_1$, $y_0$, $y_1$ and parameters~$a_{0,0}$\,, $a_{1,0}$\,, $b_{0,1}$\,, $b_{1,1}$\,, $\ol{u}$, $\ol{v}$, has no solution in~$M^4$.
\end{proof}

\begin{remark}\label{Rk:Notpure}
Although the lattice~$M$ of the proof of Theorem~\ref{T:Notpure} is not bounded, it embeds as a convex sublattice into the bounded lattice $M_{01}=M\cup\set{0,1}$, for a new bottom element~$0$ and a new top element~$1$.
The lattice~$M_{01}$ belongs to~$\cM_{3,3}$\,, and it satisfies the same negative property as~$M$:
the system of inequalities \eqref{Eq:ai0bi1xiyi}--\eqref{Eq:y01ux01} has no solution in~$M^4$, thus also no solution in $(M_{01})^4$.
\end{remark}

It follows from Theorem~\ref{T:IdProjD} that the lattice~$\DX$ is not sharply transferable with respect to the variety~$\cM$ of all modular lattices.
This can be put in contrast with the following result, which implies that transferability and sharp transferability, with respect to~$\cM$, are distinct concepts, even for finite distributive lattices.

\begin{proposition}\label{P:D4transf}
The lattice~$\DX$ is transferable with respect to~$\cM$.
\end{proposition}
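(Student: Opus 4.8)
The key reduction is that, since $\DX$ is itself a finite lattice, $\DX$ embeds as a partial lattice into a lattice $L$ exactly when $L$ carries an element $v$ that is simultaneously a \emph{nontrivial join} and a \emph{nontrivial meet}, i.e.\ when there are $x_0,x_1,y_0,y_1\in L$ with $x_0\vee x_1=y_0\wedge y_1=v$ and $x_0\not\le x_1$, $x_1\not\le x_0$, $y_0\not\le y_1$, $y_1\not\le y_0$. Indeed, the seven elements $x_0\wedge x_1<x_0,x_1<v<y_0,y_1<y_0\vee y_1$ are then automatically distinct and closed under the operations, and form a copy of $\DX$ whose central element is $v$; conversely $\DX$ exhibits such a $v$, namely its central element. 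Moreover, given an embedding $\gf\colon\DX\hookrightarrow\Id L$ of partial lattices, every value of $\gf$ lies inside the ideal $\gf(1_{\DX})$, which belongs to $\cM$, is downward directed, and has a genuine ideal lattice; replacing $L$ by $\gf(1_{\DX})$, it suffices to prove the following: if $M\in\cM$ and $\ba_0,\ba_1,\bb_0,\bb_1\in\Id M$ satisfy $\ba_0\vee\ba_1=\bb_0\cap\bb_1$ with all four containments $\ba_0,\ba_1\subsetneq\bb_0\cap\bb_1\subsetneq\bb_0,\bb_1$ strict, then $M$ has an element $v$ of the kind just described.

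Write $\bv:=\bb_0\cap\bb_1=\ba_0\vee\ba_1$ (so that $\ba_0\not\subseteq\ba_1$, $\ba_1\not\subseteq\ba_0$, $\bb_0\not\subseteq\bb_1$, $\bb_1\not\subseteq\bb_0$ are forced). If $\bv$ is a \emph{principal} ideal $M\dnw a$, one is done at once: picking $p_i\in\ba_i$ with $a\le p_0\vee p_1$ forces $a=p_0\vee p_1$ with $p_i<a$ (otherwise $\ba_i=\bv$), and picking $q_i\in\bb_i$ with $q_i\not\le a$ and then replacing $q_i$ by $q_i\vee a$ gives $q_i>a$ with $q_0\wedge q_1\in\bb_0\cap\bb_1=M\dnw a$, whence $q_0\wedge q_1=a$; so $v:=a$ works. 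So the whole difficulty is concentrated in the case where $\bv$ is non-principal, and this is the step I expect to be the main obstacle. The naive attack — pick witnesses $p_i\in\ba_i$, $q_i\in\bb_i$ of the strict containments and then repeatedly enlarge the $p_i$ (pushing $p_0\vee p_1$ upward) and the $q_i$ (pushing $q_0\wedge q_1$ downward) in an attempt to achieve $p_0\vee p_1=q_0\wedge q_1$ — does not terminate, because each enlargement of the $q_i$ moves $q_0\wedge q_1$ strictly higher inside $\bv$ and $M$ satisfies no ascending chain condition; this non-termination is exactly why $\DX$ is only transferable, and not $\idproj$ in the sense of Theorem~\ref{T:ModNoeth}, with respect to $\cM$.

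The plan for the non-principal case is to supply the missing completeness through the Baker--Hales representation of Proposition~\ref{P:BaHaRepr}: write $\Id M=\pi[T]$ with $\pi\colon T\twoheadrightarrow\Id M$ a surjective weakly distributive lattice homomorphism and $T$ a sublattice — hence a modular lattice — of an ultrapower $M^{\gL}/\cU$. Lift $\ba_i,\bb_i$ to $A_i,B_i\in T$; applying the weak distributivity of $\pi$ to $\pi(B_0\wedge B_1)=\bv\le\ba_0\vee\ba_1$ and enlarging the $A_i$ accordingly, one arranges that $C:=A_0\vee A_1$ dominates $W:=B_0\wedge B_1$ while still $\pi(C)=\pi(W)=\bv$, $A_i<C$ and $B_i>W$. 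It then remains to extract, from a modular lattice $T$ carrying a nontrivial join $C$ lying above a nontrivial meet $W$ with $\pi(C)=\pi(W)$, an element of the required kind; the point is that $\pi(C)=\pi(W)$ forces $C$ and $W$ to determine the same ideal $\bv$ of $M$, so that — reading everything coordinatewise in $M^{\gL}/\cU$ — for $\cU$-almost every $\gl$ the modular interval between the $\gl$-th coordinates of $W$ and of $C$ carries the relevant data, and a modularity argument inside that interval produces a nontrivial-join-and-meet element $v$ of the ultrapower. Since the existence of such a $v$ is an instance of a first-order formula, it transfers down from $M^{\gL}/\cU$ to $M$ by {\L}o\'s's theorem, giving $\DX\hookrightarrow M$. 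It is precisely this last extraction that genuinely needs modularity — the same argument over the variety $\cL$ of all lattices would break down, in keeping with $\DX$ not being transferable with respect to $\cL$ — and making it rigorous is the crux of the proof.
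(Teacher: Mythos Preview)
Your reduction to ``doubly reducible elements'' is correct and matches the paper. But the rest of the plan contains a genuine gap, and in fact your guiding intuition is mistaken.

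You assert that the naive attack---pick $a_i\in\ba_i$, $b_i\in\bb_i$, enlarge, repeat---does not terminate, and that this is ``exactly why $\DX$ is only transferable, and not \idproj.'' This is wrong: the paper's proof \emph{is} the naive attack, and modularity makes it terminate after a single step. Concretely, having chosen $a_i\in\ba_i\setminus\ba_{1-i}$, $b_i\in\bb_i\setminus\bb_{1-i}$ with $b_0\wedge b_1\le a:=a_0\vee a_1$, one sets $b'_i:=b_i\vee a$ and computes, by modularity (using $a\le b'_0$),
\[
b'_0\wedge b'_1=b'_0\wedge(b_1\vee a)=(b'_0\wedge b_1)\vee a\,.
\]
Now $b'_0\wedge b'_1$ is a nontrivial meet (the $b'_i$ remain in $\bb_i\setminus\bb_{1-i}$), and unless it equals~$a$ one may assume $a\nleq b_1$; then the displayed expression exhibits $b'_0\wedge b'_1$ as a join of two elements, and if that join is nontrivial we have a doubly reducible element, while if it is trivial one gets either $b'_0\wedge b'_1=a$ (done, since $a=a_0\vee a_1$ nontrivially) or $a\le b_1$ (contradiction). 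No iteration, no completeness, no ultrapowers.

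Your ultrapower route, by contrast, is left unfinished precisely at the point that matters. After Baker--Hales you obtain, in a modular lattice, a nontrivial join $C=A_0\vee A_1$ lying \emph{above} a nontrivial meet $W=B_0\wedge B_1$, and you claim that ``a modularity argument inside that interval produces'' a doubly reducible element. But this is false without further input: the diamond~$\sM_3$ is modular, has $1=p\vee q$ a nontrivial join above $0=p\wedge r$ a nontrivial meet, and contains no doubly reducible element. So the residual hypothesis $\pi(C)=\pi(W)$ must carry the entire weight of the argument, yet you give no indication of how to exploit it; and ``making it rigorous is the crux of the proof'' is an acknowledgement that the crux is missing. (Note also that you cannot simply lift the embedding $\DX\hookrightarrow\Id M$ through~$\pi$ to an embedding $\DX\hookrightarrow T$: that would amount to $\WD$-projectivity of~$\DX$ with respect to~$\cM$, which by Corollary~\ref{C:Transf2Proj1} is equivalent to sharp transferability and is precisely what fails by Theorem~\ref{T:IdProjD}.)

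In short: drop the ultrapower detour and run the direct modular computation above; that is the whole proof.
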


\begin{proof}
An element~$c$ in a lattice~$M$ is \emph{doubly reducible} if there are $a_0,a_1<c$ and $b_0,b_1>c$ such that $c=a_0\vee a_1=b_0\wedge b_1$.
Obviously, $M$ contains~$\DX$ as a sublattice if{f}~$M$ has a doubly reducible element.
We need to prove that if~$M$ is modular and~$\Id M$ has a doubly reducible element, then~$M$ has a doubly reducible element.

Suppose, to the contrary, that~$M$ has no doubly reducible element.
By assumption, there are incomparable pairs $(\ba_0,\ba_1)$ and $(\bb_0,\bb_1)$ of ideals of~$M$ such that $\ba_0\vee\ba_1=\bb_0\cap\bb_1$.
Pick $b_i\in\bb_i\setminus\bb_{1-i}$\,, for $i\in\set{0,1}$.
Since $b_0\wedge b_1\in\bb_0\cap\bb_1=\ba_0\vee\ba_1$, there is $(a_0,a_1)\in\ba_0\times\ba_1$ such that $b_0\wedge b_1\leq a_0\vee a_1$.
By further enlarging the~$a_i$\,, we may assume that $a_i\notin\ba_{1-i}$ whenever $i\in\set{0,1}$.
Set $a=a_0\vee a_1$.
The element $b'_i=b_i\vee a$ belongs to~$\bb_i\setminus\bb_{1-i}$\,, for each $i\in\set{0,1}$;
in particular, $b'_0$ and~$b'_1$ are incomparable.
If $b'_0\wedge b'_1=a$ ($=a_0\vee a_1$) then~$a$ is doubly reducible, a contradiction.
It follows that $a<b'_0\vee b'_1$.
Since $b_0\wedge b_1\leq a$, it follows that $b_i<b'_i$ for some $i\in\set{0,1}$, that is, $a\nleq b_i$; say, $a\nleq b_1$.
Further, since $a\leq b'_0$, it follows from the modularity of~$M$ that
 \begin{equation}\label{Eq:b'0mmb'1}
 b'_0\wedge b'_1=b'_0\wedge(b_1\vee a)=(b'_0\wedge b_1)\vee a\,.
 \end{equation}
Since~$b'_0$ and~$b'_1$ are incomparable and since~$M$ has no doubly reducible element, it follows from~\eqref{Eq:b'0mmb'1} that either $b'_0\wedge b_1\leq a$ or $a\leq b'_0\wedge b_1$.
In the first case, it follows from~\eqref{Eq:b'0mmb'1} that $b'_0\wedge b'_1=a$, a contradiction.
In the second case, we get $a\leq b_1$, a contradiction.
\end{proof}

\section{Partial lattices satisfying Whitman's Condition}\label{S:DavSands}

Whitman's Condition~(W) can be defined for partial lattices the same way it is defined for lattices: namely, a partial lattice~$P$ satisfies~(W) if for all nonempty finite subsets~$U$ and~$V$ of~$P$, if~$\bigwedge U$ and~$\bigvee V$ are both defined and $\bigwedge U\leq\bigvee V$, then either there is $u\in U$ such that $u\leq\bigvee V$ or there is $v\in V$ such that $\bigwedge U\leq v$.

\begin{example}\label{Ex:Diamond}
Whenever~$n$ is a positive integer, the \emph{$n$-cube} is the  powerset lattice~$\sB_n$ of an $n$-element set, and the \emph{$n$-diamond}%
\footnote{Although some references call~$\sP_n$ the \emph{$(n-1)$-diamond}, it seems that the current usage shows a slight preference towards the term ``$n$-diamond'', see, for example, Jipsen and Rose~\cite[Section~3.3]{JiRo92}.}
is defined as the partial lattice $\sP_n=\sB_n\cup\set{e}$, for an element $e\notin\sB_n$ subjected to the relations $a\wedge e=0$ and $a\vee e=1$ whenever~$a$ is an atom of~$\sB_n$.

It is easy to see that~$\sP_n$ satisfies~(W) if{f}~$\sB_n$ satisfies~(W) if{f} $n\leq3$.
\end{example}

The following result, implicit in Huhn~\cite{Huhn72}, is explicitly stated in Freese~\cite{Freese76}.

\begin{theorem}[Huhn 1972; Freese 1976]\label{T:DiamProj}
The $n$-diamond~$\sP_n$ is projective with respect to the variety~$\cM$ of all modular lattices, for every positive integer~$n$.
\end{theorem}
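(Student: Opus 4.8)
The plan is to verify the lifting property of Definition~\ref{D:Proj} directly. Fix modular lattices~$K$ and~$L$, a surjective lattice homomorphism $h\colon L\twoheadrightarrow K$, and a homomorphism of partial lattices $f\colon\sP_n\to K$; I must produce a homomorphism of partial lattices $g\colon\sP_n\to L$ with $h\circ g=f$. First I would unwind what~$f$ amounts to. Write $0^{*}=f(0)$ and $1^{*}=f(1)$, let $u_{1},\dots,u_{n}$ be the images under~$f$ of the atoms of~$\sB_{n}$, and put $c=f(e)$. Inspecting which meets and joins are actually defined in~$\sP_{n}$ (each element of $\sB_{n}\setminus\set{0,1}$ is a relative complement of~$e$ in~$[0,1]$), one sees that giving~$f$ is the same as giving the following data inside the interval $[0^{*},1^{*}]$ of~$K$: a family $(u_{i})$ for which $S\mapsto\bigvee_{i\in S}u_{i}$ is a lattice homomorphism from~$\sB_{n}$ --- call its image the \emph{Boolean frame} --- together with an element~$c$ that is a relative complement of each frame atom~$u_{i}$ and of each frame coatom $\bigvee_{j\neq k}u_{j}$; equivalently, a relative complement of $\bigvee_{i\in S}u_{i}$ for every~$S$ with $\es\neq S\neq[n]$. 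So the goal becomes: build in~$L$ elements $z\le w$ with $h(z)=0^{*}$ and $h(w)=1^{*}$, a Boolean frame $(w_{i})$ in~$[z,w]$ with $h(w_{i})=u_{i}$, and an element $c'\in[z,w]$ with $h(c')=c$ that is a relative complement, in~$[z,w]$, of each~$w_{i}$ and of each $\bigvee_{j\neq k}w_{j}$. The assignment $0\mapsto z$, $1\mapsto w$, $(i\text{-th atom})\mapsto w_{i}$, $e\mapsto c'$ then defines the required~$g$, and only the finitely many defined meets and joins of~$\sP_{n}$, all matched by construction, need be checked.

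For the construction I would first pass to a suitable interval of~$L$ so that~$h$ preserves bounds and $0^{*}=0_{K}$, $1^{*}=1_{K}$. Then, modelling the argument on von Neumann's rectification of $\sM_{3}=\sP_{2}$, I would take arbitrary preimages $p_{i}\in h^{-1}(u_{i})$ and $q\in h^{-1}(c)$, manufacture a common interval $[z_{0},w_{0}]$ of~$L$ --- with $w_{0}$ an intersection of suitable joins of the~$p_{i}$'s and~$q$, and $z_{0}$ dually a join of suitable meets, chosen so that $h(z_{0})=0_{K}$ and $h(w_{0})=1_{K}$ --- and then define the frame atoms and the diagonal as the rectified elements $w_{i}=(p_{i}\vee z_{0})\wedge w_{0}$ and $c'=(q\vee z_{0})\wedge w_{0}$, followed by any further modular corrections needed. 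At each step one checks, using modularity, that applying~$h$ reproduces the prescribed value: every defect being purged (a meet that ought to equal~$z_{0}$) or filled (a join that ought to equal~$w_{0}$) already collapses in~$K$, since the corresponding relation holds there among the~$u_{i}$ and~$c$. I would not try to lift the Boolean frame first in isolation: as a lattice the $n$-cube~$\sB_{n}$ is already not projective with respect to the variety~$\cL$ of all lattices once $n\ge4$ (the $4$-cube fails Whitman's Condition), so that step is not free of the modular hypothesis, and it is cleaner to treat the frame and the diagonal together.

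The hard part --- and the reason the theorem is confined to modular lattices --- is the coordination of the corrections. Making~$c'$ a relative complement of each individual frame atom~$w_{i}$ is routine; the difficulty is to force~$c'$ to be a relative complement of each frame \emph{coatom} $\bigvee_{j\neq k}w_{j}$ as well (whence complementation of every proper frame element follows, since each such element lies below some coatom), and these extra meet-conditions are not consequences of the atom-conditions in an arbitrary modular lattice --- their validity in the quotient~$K$ is exactly an instance of $(n-1)$-distributivity in Huhn's sense. Showing that Huhn's identities, together with modularity, propagate from~$K$ back to~$L$ and pin the rectified~$c'$ down as simultaneously orthogonal to all coatoms of the lifted frame is the technical core; I expect this to need a careful choice of the rectifying terms, perhaps organized by induction on~$n$ with the classical projectivity of~$\sM_{3}$ as the base case (and $n=1$ degenerate). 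Once it is in place, the verification that~$g$ is a homomorphism of partial lattices with $h\circ g=f$ is immediate, and since~$h$ was an arbitrary surjection of modular lattices, $\sP_{n}$ is projective with respect to~$\cM$.
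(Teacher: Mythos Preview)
The paper does not supply its own proof of this theorem; it records the result as due to Huhn and Freese and cites their papers. So there is nothing to compare your approach against internally --- the relevant benchmark is whether your outline could be completed to a self-contained argument.

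Your overall plan (lift arbitrary preimages, then rectify using modular polynomials, in the spirit of the classical projectivity of~$\sM_3$) is the right shape, but there is a misreading of the partial lattice~$\sP_n$ that propagates through the rest of the sketch. By the definition in Example~\ref{Ex:Diamond}, the \emph{only} meets and joins involving~$e$ that are defined in~$\sP_n$ are $a\wedge e=0$ and $a\vee e=1$ for \emph{atoms}~$a$ of~$\sB_n$. Your parenthetical ``each element of $\sB_n\setminus\{0,1\}$ is a relative complement of~$e$'' is not part of the partial-lattice structure, and indeed in a modular target lattice~$K$ a complement~$c$ of every frame atom need not be a complement of every frame coatom (take $K=\Sub\Bbbk^3$, three independent lines as the atoms, and a plane~$c$ meeting~$w_1+w_2$ in a line). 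So your description of what~$f$ is ``the same as'' is too strong, and correspondingly the lifting goal you set for~$g$ (complementation of every proper frame element) is more than is required. This does not by itself doom the approach, but it does mean your analysis of the ``hard part'' is aimed at the wrong target.

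The genuine gap, however, is that the technical core is not executed. You correctly locate the difficulty --- producing explicit modular terms that simultaneously rectify the frame and the diagonal --- but then write only that you ``expect this to need a careful choice of the rectifying terms, perhaps organized by induction on~$n$''. That is precisely the content of Huhn's and Freese's arguments, and without exhibiting the terms (or at least the inductive mechanism and the key identity that makes the step go through) the proposal remains a restatement of the problem rather than a proof. If you want to complete this, you should look up Huhn's treatment of $n$-diamonds via $n$-distributivity, or Freese's explicit construction, and reproduce the rectification terms; the base case $\sP_2\cong\sM_3$ is indeed classical, but the passage to general~$n$ is where the substance lies.
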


A direct application of Theorem~\ref{T:Transf2Proj} thus yields the following.

\begin{corollary}\label{C:DiamProj}
The $n$-diamond~$\sP_n$ is sharply transferable with respect to~$\cM$, for every positive integer~$n$.
\end{corollary}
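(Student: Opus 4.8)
The plan is to derive the corollary by feeding Theorem~\ref{T:DiamProj} into the network of implications collected in Theorem~\ref{T:Transf2Proj}, taking $\cC=\cM$ and $P=\sP_n$. Fix a positive integer~$n$. By Theorem~\ref{T:DiamProj}, $\sP_n$ is projective with respect to~$\cM$, that is, $\xF$-projective with respect to~$\cM$ where~$\xF$ is the class of all surjective lattice homomorphisms. Since $\WD\subseteq\xF$, this trivially gives that $\sP_n$ is $\WD$-projective with respect to~$\cM$; in other words, condition~\eqref{PWDProjC} of Theorem~\ref{T:Transf2Proj} holds in the present situation.

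First I would record that~$\cM$, being a variety of lattices, is closed under sublattices, under homomorphic images --- in particular under images of weakly distributive lattice homomorphisms --- and under ultraproducts (the latter because any lattice identity is preserved under ultraproducts, by \L o\'s's Theorem). As $\sP_n$ is finite, Theorem~\ref{T:Transf2Proj}\eqref{CProj2IdProj} now applies and yields that~$\sP_n$ is \idproj\ with respect to~$\cM$, i.e., condition~\eqref{PWDIdProj}. Using once more that~$\sP_n$ is finite, Theorem~\ref{T:Transf2Proj}\eqref{CIdProj2Transf} promotes \idprojy\ to sharp transferability, i.e., condition~\eqref{PtransfC} --- which is precisely the assertion of the corollary.

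There is essentially no genuine obstacle here beyond this bookkeeping: the only facts to check are the inclusion $\WD\subseteq\xF$ (so that projectivity entails $\WD$-projectivity) and the stated closure properties of~$\cM$, both of which are standard. It is worth stressing, though, why one must route the argument through Theorem~\ref{T:Transf2Proj} rather than directly through Corollary~\ref{C:Transf2Proj1}: the latter requires $P\in\cC$, whereas for $n\geq4$ the $n$-diamond~$\sP_n$ is a proper partial lattice and hence not a member of~$\cM$. Accommodating such~$\sP_n$ is exactly the reason Theorem~\ref{T:Transf2Proj} was phrased for partial lattices in the first place.
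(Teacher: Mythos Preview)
Your proof is correct and follows exactly the route the paper intends: the paper's own justification is the single sentence ``A direct application of Theorem~\ref{T:Transf2Proj} thus yields the following,'' and you have simply spelled out that application---projectivity $\Rightarrow$ $\WD$-projectivity $\Rightarrow$ \idprojy\ (via~\eqref{CProj2IdProj}, using the closure properties of the variety~$\cM$) $\Rightarrow$ sharp transferability (via~\eqref{CIdProj2Transf}, using finiteness of~$\sP_n$). Your closing remark about why Corollary~\ref{C:Transf2Proj1} is unavailable is apt and matches the paper's own motivation for stating Theorem~\ref{T:Transf2Proj} at the level of partial lattices; one small quibble is that~$\sP_n$ is already a proper partial lattice for $n\geq3$, not just $n\geq4$, but this does not affect the argument.
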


Whitman's Condition plays a crucial role in the following result, established for lattices in Davey and Sands \cite[Theorem~1]{DavSan}.
We include a proof for convenience.

\begin{proposition}[Davey and Sands, 1977]\label{P:DavSands}
Every partial lattice with~\textup{(W)} is projective with respect to the class of all lattices without infinite chains.
\end{proposition}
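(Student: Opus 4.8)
The plan is to transcribe the Davey--Sands construction to the setting of partial lattices. Let $h\colon L\twoheadrightarrow K$ be a surjective lattice homomorphism with $L$ and $K$ having no infinite chains, and let $f\colon P\to K$ be a homomorphism of partial lattices; we must build a homomorphism of partial lattices $g\colon P\to L$ with $h\circ g=f$. For every $p\in P$ put $C_p=\setm{t\in L}{h(t)=f(p)}$, a nonempty convex sublattice of $L$. Three elementary facts will be used repeatedly. (i)~The family $\vecm{C_p}{p\in P}$ is monotone: if $p\leq q$ then $x\wedge y\in C_p$ and $x\vee y\in C_q$ whenever $x\in C_p$ and $y\in C_q$, because $f(p)\leq f(q)$. (ii)~If $p=\bigvee X$ is defined in $P$, then $\bigvee_{x\in X}t_x\in C_p$ for any $\vecm{t_x}{x\in X}$ with each $t_x\in C_x$, and dually for defined meets; thus lifting $f$ amounts to finding a choice function for $p\mapsto C_p$ that is isotone and preserves the defined joins and meets, and preserving such an operation never takes a value outside the prescribed $C_p$. (iii)~As $L$ has no infinite chains it is well-founded and dually well-founded, so every nonempty meet-closed subset of $L$ has a least element and every nonempty join-closed subset has a greatest element; this is what permits ``minimal'' and ``maximal'' repairs.

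Whitman's condition enters as follows. A would-be lift $g$ can fail in three ways: a plain failure of isotonicity, a defined join not being respected, or a defined meet not being respected. Each single failure can be removed by a monotone move that stays inside the $C_p$'s: for instance, if $q\leq\bigvee X$ is defined and $g(q)\nleq\bigvee g[X]$, replace $g(q)$ by $g(q)\wedge\bigvee g[X]$, which lies in $C_q$ by~(i) since $f(q)\leq\bigvee f[X]$; dually for meets, and similarly for isotonicity. The configurations that \emph{cannot} be removed are those in which a single element is at once a nontrivial join and a nontrivial meet --- the $\hDX$ pattern --- and these are exactly what~\textup{(W)} for $P$ forbids: if $c=\bigvee X=\bigwedge Y$ with $c\notin X\cup Y$, then $\bigwedge Y\leq\bigvee X$ forces, by~\textup{(W)}, either some $x\in X$ with $x\geq\bigwedge Y=c$ and hence $x=c$, or some $y\in Y$ with $y\leq\bigvee X=c$ and hence $y=c$, each impossible. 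More generally, any clash between an upward repair and a downward repair at an element $s$ has the form $\bigwedge Y\leq s\leq\bigvee X$, hence $\bigwedge Y\leq\bigvee X$, and~\textup{(W)} splits it into one of the harmless alternatives $y\leq\bigvee X$ for some $y\in Y$ or $\bigwedge Y\leq x$ for some $x\in X$.

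The homomorphism $g$ is then produced by a transfinite cascade of repairs. One fixes a well-ordering of the set of all ``repair requests'' --- pairs $(X,q)$ with $\bigvee X$ defined and $q\leq\bigvee X$, the dual pairs for meets, and comparable pairs for isotonicity --- starts from an arbitrary choice function $g_0$ for $p\mapsto C_p$, and processes the requests one at a time, each time carrying out the minimal admissible enlargement or the maximal admissible shrinking furnished by~(iii) and propagating it to the finitely many elements directly constrained by the one just altered (the members of the join or meet representation concerned, and the elements immediately above or below). By~(i), every value stays in its $C_p$ throughout; the absence of infinite chains in $L$ bounds, on each coordinate $p$, the number of times $g(p)$ can increase and the number of times it can decrease; and~\textup{(W)}, through the dichotomy of the preceding paragraph, ensures that an upward repair at $s$ never reopens a downward defect there, and conversely, so that processing along the fixed well-ordering strictly decreases a suitable ordinal rank and the cascade halts. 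The stable $g$ is isotone, respects every defined join and meet, and is a choice function for $p\mapsto C_p$, as required.

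The main obstacle, and the only genuinely delicate point, is exactly this termination argument. Upward repairs alone terminate by dual well-foundedness of $L$, downward repairs alone by its well-foundedness; the difficulty is the coupling of the two, which has to be controlled globally even though $P$ may be infinite --- so one cannot merely invoke well-foundedness of a product of copies of $L$, and the repair requests must be ordered and accounted for individually, with Whitman's condition supplying the acyclicity that makes a workable order exist. Compared with the lattice case of Davey and Sands, the partial-lattice setting adds nothing: there are simply fewer defined joins and meets, hence fewer repair requests.
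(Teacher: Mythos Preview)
Your proposal has a genuine gap: the termination argument is never actually carried out. You acknowledge yourself that ``the main obstacle, and the only genuinely delicate point, is exactly this termination argument,'' and then assert that processing the repair requests ``strictly decreases a suitable ordinal rank'' without ever defining that rank, and that~(W) ``supplies the acyclicity that makes a workable order exist'' without exhibiting the order. The dichotomy you extract from~(W)---that $\bigwedge Y\leq\bigvee X$ forces some $y\leq\bigvee X$ or some $\bigwedge Y\leq x$---does not by itself show that an upward repair at one element cannot reopen a downward defect at a \emph{different} element, and it is precisely this cross-coupling across an infinite~$P$ that you must control. The observation that no element of~$P$ is simultaneously a nontrivial defined join and a nontrivial defined meet handles only the degenerate case where both defects sit at the same point. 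Without a concrete rank function or a concrete well-ordering together with a proof that each step strictly decreases it, the argument is a plausibility sketch, not a proof.

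The paper's proof sidesteps the entire termination issue by a single application of Zorn's Lemma, and this is worth internalising. Since~$L$ has no infinite descending chain, the lower adjoint $\beta\colon K\hookrightarrow L$, $x\mapsto\min h^{-1}\{x\}$, exists; then $\beta\circ f$ is already a partial \jh\ lifting~$f$. Since~$L$ has no infinite ascending chain, Zorn gives a \emph{maximal} partial \jh\ $g\geq\beta\circ f$ lifting~$f$. If~$g$ failed a defined meet $a=\bigwedge U$, one would set $e=\bigwedge g[U]$ and replace~$g(x)$ by~$g(x)\vee e$ whenever $a\leq x$; this strictly enlarges~$g$ and still lifts~$f$, so by maximality it must break some defined join $b=\bigvee V$, forcing $a\leq b$, $a\nleq v$ for all $v\in V$, and $u\nleq b$ for all $u\in U$. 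But $\bigwedge U=a\leq b=\bigvee V$ together with~(W) then yields either some $u\leq b$ or some $a\leq v$, a contradiction. The asymmetry---fix joins first by maximality, then meets fall out---is exactly what replaces your missing global rank.
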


\begin{proof}
Let~$P$ be a partial lattice with~(W), let~$K$ and~$L$ be lattices without infinite chains, let $h\colon L\twoheadrightarrow K$ be a surjective lattice homomorphism, and let $f\colon P\to K$ be a homomorphism of partial lattices.
Since~$L$ has no infinite descending sequence, the lower adjoint ($\beta\colon K\hookrightarrow L$, $x\mapsto\min h^{-1}\set{x}$) of~$h$ is defined.
The set~$\cC$, of all partial \jh{s} $\xi\colon P\to L$ such that $f=h\circ\xi$, contains~$\beta\circ f$ as an element.
Since~$L$ has no infinite ascending sequence and by Zorn's Lemma, $\cC$ has a maximal element~$g$ such that $\beta\circ f\leq g$.
We claim that~$g$ is a homomorphism of partial lattices.
Suppose otherwise.
Then~$g$ is not a partial \mh, that is, there are $a\in P$ and $U\in\fine{P}$ such that $a=\bigwedge U$ and $g(a)<\bigwedge g[U]$.
Set $e=\bigwedge g[U]$ and define $g'\colon P\to L$ by setting
 \[
 g'(x)=\begin{cases}
 g(x)&(\text{if }a\nleq x)\\
 g(x)\vee e&(\text{if }a\leq x)
 \end{cases}\,,\qquad\text{for any }x\in P\,.
 \]
Observe that $h\circ g'=g$.
Since $g\leq g'$ and $g'(a)=e>g(a)$, it follows from the maximality assumption of~$g$ that~$g'$ is not a \jh, that is, there are $b\in P$ and $V\in\fine{P}$ such that $b=\bigvee V$ and $\bigvee g'[V]<g'(b)$.
Necessarily, $a\leq b$, $a\nleq v$ for any $v\in V$, and $u\nleq b$ for any~$u\in U$.
Since~$P$ satisfies~(W), this is a contradiction.
\end{proof}

Recall that a variety~$\cV$ of algebras is \emph{locally finite} if every finitely generated algebra in~$\cV$ is finite.

\begin{corollary}\label{C:DavSands1}
Let~$P$ be a finite partial lattice with~\textup{(W)} and let~$\cV$ be a locally finite lattice variety.
Then~$P$ is both projective and \idproj\ with respect to~$\cV$.
\end{corollary}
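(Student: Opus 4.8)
The plan is to derive projectivity directly from the Davey--Sands theorem (Proposition~\ref{P:DavSands}) via a local finiteness reduction, and then to read off \idprojy\ from Theorem~\ref{T:Transf2Proj}\eqref{CProj2IdProj}. The starting observation is that since~$\cV$ is locally finite, every finitely generated lattice in~$\cV$ is finite, and a finite lattice has no infinite chains; hence Proposition~\ref{P:DavSands} is available for all finitely generated members of~$\cV$.

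To prove that~$P$ is projective with respect to~$\cV$, I would take $K,L\in\cV$, a surjective lattice homomorphism $h\colon L\twoheadrightarrow K$, and a homomorphism of partial lattices $f\colon P\to K$. Since~$P$ is finite, $f[P]$ is a finite subset of~$K$, so I can choose $t_x\in L$ with $h(t_x)=x$ for each $x\in f[P]$, and let~$L_0$ be the sublattice of~$L$ generated by $\setm{t_x}{x\in f[P]}$; by local finiteness~$L_0$ is finite. Its image $K_0=h[L_0]$ is the sublattice of~$K$ generated by~$f[P]$, hence contains~$f[P]$, and~$h$ restricts to a surjective homomorphism $L_0\twoheadrightarrow K_0$ between finite lattices, while~$f$, viewed as a map into~$K_0$, is still a homomorphism of partial lattices. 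Proposition~\ref{P:DavSands} then yields a homomorphism of partial lattices $g\colon P\to L_0$ with $h\circ g=f$, and composing~$g$ with the inclusion $L_0\hookrightarrow L$ produces the desired lift. Thus~$P$ is projective, hence \emph{a fortiori} $\WD$-projective, with respect to~$\cV$.

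Finally, being a variety, $\cV$ is closed under sublattices, under homomorphic images (in particular under images of weakly distributive lattice homomorphisms), and --- being an elementary class --- under ultraproducts; since~$P$ is finite, Theorem~\ref{T:Transf2Proj}\eqref{CProj2IdProj} applies and shows that~$P$ is \idproj\ with respect to~$\cV$. No step here presents a genuine difficulty; the only point worth a sentence is that replacing the codomain~$K$ of~$f$ by the sublattice~$K_0$ keeps it a homomorphism of partial lattices, which holds because finite joins and meets computed in~$K_0$ coincide with those computed in~$K$.
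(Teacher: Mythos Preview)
Your proof is correct and follows essentially the same approach as the paper: reduce to finite sublattices via local finiteness, apply Proposition~\ref{P:DavSands} there to obtain projectivity, and then invoke Theorem~\ref{T:Transf2Proj}\eqref{CProj2IdProj} (projectivity $\Rightarrow$ $\WD$-projectivity $\Rightarrow$ \idprojy) using that a variety is closed under the requisite constructions. The only cosmetic difference is that the paper first forms the finite sublattice~$K'\subseteq K$ generated by~$f[P]$ and then picks a finite~$L'$ with $h[L']=K'$, whereas you generate~$L_0$ from chosen preimages and observe $h[L_0]=K_0$; these amount to the same thing.
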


\begin{proof}
By Theorem~\ref{T:Transf2Proj}, it suffices to prove that~$P$ is projective with respect to~$\cV$.
Let $K,L\in\cV$, let $h\colon L\twoheadrightarrow K$ be a surjective lattice homomorphism, and let $f\colon P\to\nobreak K$ be a homomorphism of partial lattices.
Since~$P$ is finite and~$K$ is locally finite, the sublattice~$K'$ of~$K$ generated by~$f[P]$ is finite.
Since~$L$ is locally finite, there is a finite sublattice~$L'$ of~$L$ such that $h[L']=K'$.
By applying Proposition~\ref{P:DavSands} to the domain-range restriction of~$h$ from~$L'$ onto~$K'$, we obtain a homomorphism $g\colon P\to L'$ of partial lattices such that $f=h\circ g$.
\end{proof}

For example, the $3$-diamond $\sP_3$ is both projective and sharply transferable with respect to any locally finite lattice variety.

\begin{corollary}\label{C:DavSands3}
A finite lattice~$P$, satisfying~\textup{(W)}, is semidistributive if{f} there is a largest lattice variety~$\cV$ such that~
$P$ is projective \pup{resp., \idproj} with respect to~$\cV$.
In that case, $\cV=\cL$, the variety of all lattices.
\end{corollary}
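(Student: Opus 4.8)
The plan is to read both directions off Theorem~\ref{T:CharTransf}, Corollary~\ref{C:Transf2Proj1} (applied with $\cC=\cL$), and Corollary~\ref{C:DavSands1}, splitting on whether $P$ is semidistributive; no new construction is needed.

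For the "only if" direction I would argue as follows. Assume $P$ is semidistributive. Since $P$ is a finite lattice satisfying (W), Theorem~\ref{T:CharTransf} yields that $P$ is both projective and sharply transferable with respect to $\cL$. As $\cL$ is an abstract class of lattices closed under sublattices, nonempty finite products, images under weakly distributive lattice homomorphisms, and ultrapowers, and $P$ is a finite member of $\cL$, Corollary~\ref{C:Transf2Proj1} upgrades sharp transferability with respect to $\cL$ to \idprojy\ with respect to $\cL$. Since $\cL$ is the top element among lattice varieties, it is trivially the largest variety with respect to which $P$ is projective (resp., \idproj), and it equals $\cL$.

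For the "if" direction I would prove the contrapositive. Assume $P$ is not semidistributive. Then by Theorem~\ref{T:CharTransf}, $P$ is not projective, and not sharply transferable, with respect to $\cL$; hence, by Corollary~\ref{C:Transf2Proj1}, it is not \idproj\ with respect to $\cL$ either. Supposing for contradiction that $\cV_0$ is a largest variety with respect to which $P$ is projective (resp., \idproj), we get $\cV_0\subsetneq\cL$, so there is a lattice identity $\eps$ holding throughout $\cV_0$ but failing in some lattice. I would then invoke the classical fact that $\cL$ is generated by its finite members, so $\eps$ fails in some finite lattice $L_0$; the variety $\mathcal{W}$ it generates is locally finite and satisfies $\mathcal{W}\nsubseteq\cV_0$. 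But $P$ is a finite partial lattice satisfying (W), so Corollary~\ref{C:DavSands1} gives that $P$ is projective and \idproj\ with respect to $\mathcal{W}$ --- contradicting the maximality of $\cV_0$. Therefore no largest variety exists.

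The main point requiring external input --- and the step I would flag --- is the appeal to the fact that the variety of all lattices is generated by its finite members (equivalently, every lattice identity that fails somewhere fails in a finite lattice; this is well known, following for instance from Whitman's solution of the word problem for free lattices). It is exactly what ensures that every proper subvariety of $\cL$ is strictly contained in a locally finite variety, so that Corollary~\ref{C:DavSands1} always supplies a witness to non-maximality of any proposed $\cV_0$. Everything else is routine bookkeeping with the quoted results.
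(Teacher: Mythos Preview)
Your proposal is correct and follows essentially the same route as the paper's proof: both directions rest on Theorem~\ref{T:CharTransf}, Corollary~\ref{C:DavSands1}, the equivalence between sharp transferability and \idprojy\ for varieties, and Dean's result that finite lattices generate~$\cL$. The only cosmetic difference is that the paper argues the converse directly (assuming a largest~$\cV$ exists, it shows $\cV=\cL$ and then reads off semidistributivity), whereas you phrase it as a contrapositive; the substance is identical.
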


\begin{proof}
If~$P$ is semidistributive, then, since it is finite and satisfies~(W), it is both projective and sharply transferable (this is mostly due to Nation~\cite{Nation82}; see Theorem~\ref{T:CharTransf}).
By Theorem~\ref{T:Transf2Proj}, it follows that~$P$ is both projective and \idproj\ with respect to~$\cL$.

Suppose, conversely, that there is a largest variety~$\cV$ such that~$P$ is projective (resp. \idproj) with respect to~$\cV$.
It follows from Corollary~\ref{C:DavSands1} that~$P$ is both projective and \idproj\ with respect to the variety generated by any finite lattice.
Since finite lattices generate the variety of all lattices (cf. Dean~\cite{Dean56b}), we get $\cV=\cL$.
Using Theorem~\ref{T:Transf2Proj}, it follows that~$P$ is projective (resp., sharply transferable), thus semidistributive (cf. Theorem~\ref{T:CharTransf}).
\end{proof}

For example, the lattice~$\sM_3$\,, of length~$2$ and with three atoms, satisfies~(W), but it is not semidistributive.
Hence, there is no largest variety~$\cV$ such that~$\sM_3$ is projective (resp., \idproj) with respect to~$\cV$.

We conclude the paper with the following problems.

\begin{problem}\label{Pb:ProjVar}
What are the finite lattices~$P$ for which there is a largest lattice variety~$\cV$ such that~$P$ is \idproj\ (resp., projective) with respect to~$\cV$?
Does that class include all finite distributive lattices?
What are the possible values of~$\cV$?
\end{problem}

For example, it follows from Theorem~\ref{T:IdProjD} that the specialization of Problem~\ref{Pb:ProjVar} to $P=\DX$ has the solution $\cV=\cM_{\go}$\,.
On the other hand, if~$P$ is projective with respect to~$\cL$ (e.g., $P$ is Boolean with three atoms), then the solution to our problem is $\cV=\cL$, the variety of all lattices.
On the other hand, by the above, if $P=\sM_3$\,, then there is no largest variety~$\cV$ such that~$P$ is \idproj\ (resp., projective) with respect to~$\cV$.

By Theorem~\ref{T:IdProjDLat}, every finite distributive lattice is sharply transferable with respect to~$\cD$.
Moreover, by Theorem~\ref{T:IdProjD}, the distributive lattice~$\DX$ is sharply transferable with respect to the even larger variety~$\cM_{\go}$ generated by all lattices of length two.
This suggests the following problem.

\begin{problem}\label{Pb:DMgoidproj}
Is every finite distributive lattice sharply transferable with respect to~$\cM_{\go}$?
\end{problem}


\providecommand{\noopsort}[1]{}\def\cprime{$'$}
  \def\polhk#1{\setbox0=\hbox{#1}{\ooalign{\hidewidth
  \lower1.5ex\hbox{`}\hidewidth\crcr\unhbox0}}}
\providecommand{\bysame}{\leavevmode\hbox to3em{\hrulefill}\thinspace}
\providecommand{\MR}{\relax\ifhmode\unskip\space\fi MR }
\providecommand{\MRhref}[2]{%
  \href{http://www.ams.org/mathscinet-getitem?mr=#1}{#2}
}
\providecommand{\href}[2]{#2}

\end{document}